\newenvironment{breakablealgorithm}
  {
   \begin{center}
     \refstepcounter{algorithm}
     \hrule height.8pt depth0pt \kern2pt
     \renewcommand{\caption}[2][\relax]{
       {\raggedright\textbf{\ALG@name~\thealgorithm} ##2\par}%
       \ifx\relax##1\relax 
         \addcontentsline{loa}{algorithm}{\protect\numberline{\thealgorithm}##2}%
       \else 
         \addcontentsline{loa}{algorithm}{\protect\numberline{\thealgorithm}##1}%
       \fi
       \kern2pt\hrule\kern2pt
     }
  }{
     \kern2pt\hrule\relax
   \end{center}
  }
\newtheorem{thm}{Theorem}[section]
\newtheorem{prop}[thm]{Proposition}
\newtheorem{rem}[thm]{Remark}
\newtheorem{example}[thm]{Example}
\newcommand{\beq}{\begin{equation}}
\newcommand{\eeq}{\end{equation}}
\numberwithin{equation}{section} \topmargin=-2.0cm \oddsidemargin=1cm
\begin{document}

\baselineskip=1.5pc

\title{\textbf{A well-balanced moving mesh discontinuous Galerkin method for the Ripa model on triangular meshes}}

\author{Weizhang Huang\footnote{Department of Mathematics, University of Kansas, Lawrence, Kansas 66045, USA. E-mail: whuang@ku.edu.},
~Ruo Li\footnote{CAPT, LMAM, and School of Mathematical Sciences, Peking University, Beijing 100871, China.
E-mail: rli@math.pku.edu.cn.
The research of this author is partly supported by the National Key R\&D Program of China, Project Number 2020YFA0712000.},
~Jianxian Qiu\footnote{School of Mathematical Sciences and Fujian Provincial Key Laboratory of Mathematical Modeling and High-Performance Scientific Computing, Xiamen University, Xiamen, Fujian 361005, China.
E-mail: jxqiu@xmu.edu.cn.
The research of this author is partly supported by NSFC grant 12071392.},
~and Min Zhang\footnote{School of Mathematical Sciences, Peking University, Beijing, 100871, China.
E-mail: minzhang@math.pku.edu.cn.}
}

\date{}
\maketitle
\begin{abstract}
A well-balanced moving mesh discontinuous Galerkin (DG) method is proposed for the numerical solution of the Ripa model -- a generalization of the shallow water equations that accounts for effects of water temperature variations.
Thermodynamic processes are important particularly in the upper layers of the ocean where the variations of sea surface temperature play a fundamental role in climate change.
The well-balance property which requires numerical schemes to preserve the lake-at-rest steady state is crucial to the simulation of perturbation waves over that steady state such as waves on a lake or tsunami waves in the deep ocean.
To ensure the well-balance, positivity-preserving, and high-order properties, a DG-interpolation scheme
(with or without scaling positivity-preserving limiter) and special treatments pertaining to the Ripa model are employed in the transfer
of both the flow variables and bottom topography from the old mesh to the new one and in the TVB limiting process.
Mesh adaptivity is realized using an MMPDE moving mesh approach and a metric tensor based on an equilibrium variable and water depth. A motivation is to adapt the mesh according to both the perturbations of the lake-at-rest steady state and the water depth distribution (bottom structure).
Numerical examples in one and two dimensions are presented to demonstrate the well-balance, high-order accuracy, and positivity-preserving properties of the method and its ability to capture small perturbations of the lake-at-rest steady state.
\end{abstract}
\noindent\textbf{The 2020 Mathematics Subject Classification:} 65M50, 65M60, 76B15, 35Q35

\vspace{5pt}

\noindent\textbf{Keywords:}
well-balanced, DG-interpolation, high-order accuracy, positivity, moving mesh DG method, Ripa model

\normalsize \vskip 0.2in

\newpage

\section{Introduction}

We are interested in the numerical solution of the Ripa model that is a generalization of the shallow water equations (SWEs) where a temperature gradient is considered. The SWEs play a critical role in the modeling and simulation of free-surface flows in rivers and coastal areas. They can be used to predict tides, storm surge levels, coastline changes from hurricanes, and etc. The SWEs can be derived by integrating the Navier-Stokes equations in depth under the hydrostatic assumption when the depth of water is small compared to its horizontal dimensions, with the density being assumed constant.
However, the SWEs have the limitation that they cannot represent thermodynamic processes which are important particularly in the upper layers of the ocean where the variations of sea surface temperature are an important factor to climate change.

Many oceanic phenomena can be investigated using layered models where
the flow variables such as density and horizontal velocity are considered vertically uniform in each of the layers. Research on the numerical solution of those layered SWEs \cite{Abgrall-Karni-2009SISC,Bouchut-Zeitlin-2010,Kurganov-Petrova-2009SISC} has attracted tremendous attention in recent years.
Challenges in studying such models arise from the complicated eigenstructure, conditional hyperbolicity, non-conservativeness of the system,
and etc.
The Ripa model was introduced to analyze the ocean currents by Ripa \cite{Ripa-1993} in 1993. It was derived by integrating the velocity field, density, and horizontal gradients along the vertical direction in each layer of multi-layer SWEs. The introduction of temperature is advantageous because the movement and behavior of ocean currents are impacted by forces such as temperature acting upon the water. The Ripa model in conservative form reads as
\begin{equation}\label{ripa-form}
\frac{\partial}{\partial t}
\begin{bmatrix*}[l]
  h\\
  hu\\
  hv\\
  h\theta\\
\end{bmatrix*}
+\frac{\partial}{\partial x}
\begin{bmatrix*}[c]
hu\\
hu^2+\frac{1}{2}g h^2\theta\\
huv\\
hu\theta\\
\end{bmatrix*}
+\frac{\partial}{\partial y}
\begin{bmatrix*}[c]
hv\\
huv\\
hv^2+\frac{1}{2}g h^2\theta\\
hv\theta\\
\end{bmatrix*}
=
\begin{bmatrix*}[c]
  0\\
  -gh\theta b_x\\
  - gh\theta b_y\\
  0\\
\end{bmatrix*} ,
\end{equation}
where $h(x,y,t)\geq0$ is the depth of water, $(u,v)$ are the depth-averaged horizontal velocities,  $\theta(x,y,t)>0$ is a potential temperature field, $b = b(x,y)$ is the bottom topography assumed to be a given time-independent function, and $g$ is the gravitational acceleration constant. The horizontally varying potential temperature field $\theta$ represents the reduced gravity $g\Delta\Theta/\Theta_{ref}$, where $\Delta\Theta$ is the difference in potential temperature from a reference value $\Theta_{ref}$ while
the term, $\frac12 g h^2\theta$, represents the pressure depending on the water temperature.

The Ripa model possesses a number of steady-state solutions \cite{Desveaux-etal-2016}.
Of particular interest are the steady states corresponding to still water (also called ``lake-at-rest"),
 \begin{equation}\label{case1-0}
u=0,\quad v=0,\quad \theta = C_1, \quad h+b = C_2,
  \end{equation}
where $C_1$ and $C_2$ are positive constants.
Many physical phenomena, such as waves on a lake or tsunami waves in the deep ocean,
can be described as small perturbations of these steady-state solutions, and
they are difficult to capture numerically unless numerical schemes preserve the unperturbed steady state at the discrete level.
Thus, it is crucial to develop such steady-state preserving numerical schemes, which are called well-balanced schemes in literature.

Developing well-balanced numerical schemes for the Ripa model is not a trivial task.
Recall that the Ripa model \eqref{ripa-form} reduces to the SWEs when the temperature is constant
and both systems have a similar structure.
In principle, we can extend well-balanced schemes developed for the SWEs to the Ripa model.
However, caution must be taken to make sure that constant temperature is preserved for the still water steady state since the Ripa model has an extra variable (the temperature) and an extra differential equation.
This is especially true when moving meshes are used in the computation.
Moreover, the Ripa model can be rewritten as
\begin{equation}\label{ripa-form-2}
\frac{\partial U}{\partial t}+\nabla \cdot F(U) = S(U,b),
\end{equation}
where
\[
\begin{split}
U = \begin{bmatrix*}[l]
  h\\
  m\\
  w\\
  \eta\\
\end{bmatrix*}
= \begin{bmatrix*}[l]
  h\\
  hu\\
  hv\\
  h\theta\\
\end{bmatrix*}
,\quad
&
F(U)=\begin{bmatrix*}[c]
 m&w\\
  \frac{m^2}{h}+\frac{1}{2}g\eta h& \frac{mw}{h}\\
 \frac{mw}{h}&\frac{w^2}{h}+\frac{1}{2}g\eta h\\
 \frac{m\eta}{h}&\frac{w\eta}{h}\\
\end{bmatrix*}
,\quad S(U,b)=
\begin{bmatrix*}[c]
  0\\
  -g\eta b_x\\
  - g\eta b_y\\
  0\\
\end{bmatrix*}.
\end{split}
\]
In terms of the conservative variables $U = (h, m, w, \eta)^T$,
the lake-at-rest steady-state solution (\ref{case1-0}) reads as
\begin{equation}
\label{case1}
m = 0, \quad w = 0, \quad \eta = C_1 h, \quad h + b = C_2.
\end{equation}
From this, we can see that the main difference between the lake-at-rest steady-state solutions
for the SWEs and the Ripa model is that the Ripa model
has an extra relation $\eta = C_1 h$, which is a linear relation between $\eta$ and $h$ instead of being a constant for a conservative variable.
{\em This relation requires special attention pertaining to the Ripa model
when verifying the well-balance property of numerical schemes.}
Indeed, as can be seen in \S\ref{sec:WB-MMDG}, special treatments are needed in the interpolation/remapping step,
the TVB limiting process,
and positivity-preserving (PP) limiting process for the proposed numerical method to be well-balanced.

In recent years studies have been made on the development of well-balanced numerical schemes for the Ripa model.
The first work seems to be \cite{Chertock-etal-2014} by Chertock et al. who considered central-upwind schemes.
Other examples include
Touma and Klingenberg \cite{Touma-Klingenberg-2015} (a second-order positivity preserving finite volume scheme on rectangular meshes),
S$\acute{a}$nchez-Linares et al. \cite{Sanchez-etal-2016HLLC} (a second-order positivity preserving HLLC scheme based in path-conservative approximate Riemann solvers, for the one-dimensional Ripa model),
Han and Li  \cite{Han-Li-2016} (a high-order finite difference weighted essentially non-oscillatory (WENO) scheme),
Saleem et al. \cite{Saleem-etal-2018KFVX} (a kinetic flux vector splitting scheme on rectangular meshes),
Thanh et al. \cite{Thanh-etal-2021} (a high-order scheme of van Leer's type for the one-dimensional SWEs with temperature gradient),
Rehman et al.  \cite{Rehman-etal-2021} (a fifth-order finite volume multi-resolution WENO scheme on rectangular meshes),
Britton and Xing \cite{Britton-Xing-2020JSC} (a DG scheme for the one-dimensional Ripa model),
Qian et al. \cite{Qian-etal-2018} (a DG method based on a source term approximation technique),
and Li et al. \cite{Li-etal-2020AAMM} (a DG method based on hydrostatic reconstruction on rectangular meshes).
Fixed meshes are employed in the above mentioned works.

The objective of this work is to develop a well-balanced positivity-preserving moving mesh DG (MM-DG) method for the Ripa model on triangular meshes.
The Ripa model is a nonlinear hyperbolic system and can develop discontinuous solutions
such as shock waves, rarefaction waves, and contact discontinuities even when the initial condition is continuous.
Small mesh spacings are required in the regions of these structures in order to resolve them numerically
and mesh adaptation is often necessary to improve computational accuracy and efficiency.
In this work we shall study a rezoning-type moving mesh DG method
in one and two spatial dimensions.
The DG method \cite{DG-review} is known to be suited for the numerical solution of
hyperbolic systems while adaptive mesh movement provides the needed mesh adaptivity.
Our main objective is to show that the MM-DG method maintains high-order accuracy
of DG discretization while preserving the lake-at-rest steady state solutions and the positivity of the water depth and temperature.

The rezoning MM-DG method contains three basic components at each time step,
the adaptive mesh movement,
interpolation/remapping of the solution between the old and new meshes,
and numerical solution of the Ripa model on the new mesh.
The first component is based on the MMPDE moving mesh method \cite{Huang-Kamenski-2015JCP,Huang-etal-1994Siam,Huang-Russell-2011,Huang-Sun-2003JCP}
which has been shown analytically and numerically in \cite{Huang-Kamenski-2018MC} to generate meshes free of tangling for any (convex or concave) domain  in any dimension.
With the MMPDE method, the size, shape, and orientation of the mesh elements are controlled through
the metric tensor, a symmetric and uniformly positive definite matrix-valued function defined on the physical domain
and computed typically using the recovered Hessian of a DG solution.
The equilibrium variable $\mathcal{E}=\frac{1}{2}(u^2+v^2)+g\theta(h+b) $ and the water depth $h$ were used
in \cite{Zhang-Huang-Qiu-2021JSC} to construct the metric tensor so that the resulting mesh adapts
to the perturbations of the lake-at-rest steady-state solution.
Instead of using $\mathcal{E}$ and $h$ directly, we use  $\ln(\mathcal{E})$ and $\ln(h)$ here
to minimize the effect of dimensional difference between $\mathcal{E}$ and $h$.

The interpolation/remapping is a key component for the MM-DG method to maintain high-order accuracy,
preserve the lake-at-rest steady-state solutions, preserve the positivity of water depth and temperature field, and conserve the mass.
Several conservative interpolation schemes between deforming meshes have been investigated;
see, e.g. \cite{Li-Tang-2006JSC,TangTang-2003Siam}.
We use the recently developed DG-interpolation scheme
\cite{Zhang-Huang-Qiu-2020SISC} for this purpose.
This scheme works for large mesh deformation, has high-order accuracy,
conserves the mass, and preserves solution positivity as well as constant solutions.
It has been applied successfully to the moving mesh solution of
the radiative transfer equation \cite{Zhang-Huang-Qiu-2020SISC} and
the SWEs \cite{Zhang-Huang-Qiu-2021JSC}.

A well-balanced DG method is used for the numerical solution of the Ripa model on the new mesh; the detail of the method is presented in \S\ref{sec:WB-MMDG}.
It should be pointed out that a TVB slope limiter \cite{DG-series5} is used in our computation to avoid spurious oscillations. However, the standard TVB limiter procedure applied directly to the local characteristic variables based on $\big(h_h,m_h, w_h,\eta_h\big)^T$ may destroy the well-balance property. Special treatments are needed in their use to warrant the well-balance and high-order accuracy properties of the overall MM-DG method and in the situation with dry or almost dry regions in the domain.
The detail of the discussion is given in \S\ref{sec:WB-MMDG}.

The paper is organized as follows.
\S\ref{sec:WB-MMDG} is devoted to the description of the overall procedure
of the well-balanced MM-DG method and its DG and Runge-Kutta discretization for the Ripa model.
The DG-interpolation scheme and its properties are described in \S\ref{sec:DG-interp}.
Numerical results obtained with the MM-DG method for a selection of one- and two-dimensional
examples are presented in \S\ref{sec:numerical-results}.
Finally, \S\ref{sec:conclusions} contains conclusions and further comments.

\section{Well-balanced MM-DG method for Ripa model}
\label{sec:WB-MMDG}

In this section we describe the well-balanced rezoning-type MM-DG method for the numerical solution of the Ripa model on moving meshes. It contains three basic components at each time step, the adaptive mesh movement (generation of the new mesh), interpolation/remapping of the solution from the old mesh to the new one,
and numerical solution of the Ripa model on the new mesh; see Fig.~\ref{Fig:RMM}.
An MMPDE moving mesh method is used to generate the new mesh (see its description in \S\ref{sec:numerical-results}).
A DG-interpolation scheme is used for the solution interpolation between deforming meshes (see \S\ref{sec:DG-interp}).
In this section we focus on the overall description of the scheme and the discretization of the Ripa model in two dimensions on triangular meshes.

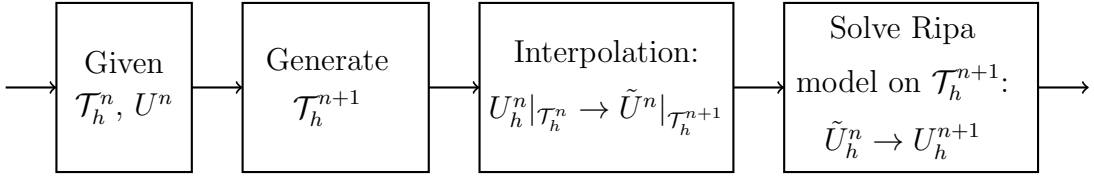
\begin{figure}[h]
\centering
\tikzset{my node/.code=\ifpgfmatrix\else\tikzset{matrix of nodes}\fi}
\begin{tikzpicture}[every node/.style={my node},scale=0.45]
\draw[thick] (0,0) rectangle (4,5);
\draw[thick] (5.5,0) rectangle (11,5);
\draw[thick] (12.5,0) rectangle (20,5);
\draw[thick] (21.5,0) rectangle (29,5);
\draw[->,thick] (-1.5 ,2.5)--(0 ,2.5);
\draw[->,thick] (4 ,2.5)--(5.5 ,2.5);
\draw[->,thick] (11,2.5)--(12.5,2.5);
\draw[->,thick] (20,2.5)--(21.5,2.5);
\draw[->,thick] (29,2.5)--(30.5,2.5);
\node (node1) at (2,2.5)
{Given \\ $\mathcal{T}_h^n$, $U^n$ \\};
\node (node2) at (8,2.5)
{Generate\\ $\mathcal{T}_h^{n+1}$\\ };
\node (node3) at (16.25,2.5)
{Interpolation:\\ $U_h^n|_{\mathcal{T}_h^n}\rightarrow\tilde{U}^n|_{\mathcal{T}_h^{n+1}}$\\};
\node (node4) at (25,2.5) {
Solve Ripa \\model on $\mathcal{T}_h^{n+1}$:\\
$\tilde{U}_h^n\rightarrow U_h^{n+1}$\\};
\end{tikzpicture}
\caption{Illustration of the rezoning-type moving mesh scheme.}\label{Fig:RMM}
\end{figure}

We use a well-balanced Runge-Kutta DG method for solving the Ripa model on triangular meshes $\mathcal{T}_h^{n+1}$ to obtain $U^{n+1}_h$ at physical time $t_{n+1}$.
To this end, we assume that the meshes at $t_n$ and $t_{n+1}$,  $\mathcal{T}_h^{n}$ and $\mathcal{T}_h^{n+1}$, respectively,
and the numerical solutions $U^n_h = (h^n_h,m^n_h,w^n_h,\eta^n_h)^T$ on $\mathcal{T}_h^{n}$ are known.
We also assume that the projection of $U^n_h = (h^n_h,m^n_h,w^n_h,\eta^n_h)^T$ on the new mesh $\mathcal{T}_h^{n+1}$,
denoted by $\tilde{U}^n_h = (\tilde{h}^n_h,\tilde{m}^n_h,\tilde{w}^n_h,\tilde{\eta}^n_h)^T$, has been obtained.
Define the DG finite element space on new mesh $\mathcal{T}_h^{n+1}$ as
\begin{equation}
\mathcal{V}^{k, n+1}_h= \{q\in L^2(\Omega):\; q|_{K}\in \mathbb{P}^{k}(K),
\; \forall K\in \mathcal{T}_h^{n+1}\},
\end{equation}
where $k$ is a positive integer, $\mathbb{P}^k(K)=\text{span}\{\phi_K^{(j)}\}_{j=1}^{n_b}$ is the set of polynomials of degree up to $k$ in the element $K$, and $\phi_K^{(j)}$, $j=1, ..., n_b$ are the local orthogonal basis functions.

Multiplying \eqref{ripa-form-2} by a test function $\phi\in \mathcal{V}^{k, n+1}_h$,
integrating the resulting equation over $K\in \mathcal{T}_h^{n+1}$, and using the divergence theorem, we have
\begin{equation}\label{div}
\frac{d}{d t}\int_{K}U_h \phi d\bm{x}
- \int_{K}F(U_h)\cdot \nabla \phi d\bm{x}
+ \int_{\partial K} F(U_h) \cdot\bm{n}_K \phi  ds
= \int_{K}S(U_h,b_h)\phi d\bm{x},
\end{equation}
where $\bm{n}_K=(n_x,n_y)^T$ is the outward unit normal to the boundary $\partial K$ and $U_h \in \mathcal{V}^{k, n+1}_h$
is a DG approximation to the exact solution $U$, and $b_h$ is a DG polynomial approximation of the bottom topography $b$.
Notice that $U_h$ is discontinuous on interior edges of the mesh.
On any interior edge $e_K \in \partial K$, $U_h$ can be defined using its value either in $K$, denoted by $U_{h,K}^{int}|_{e_K}$,
or in the element $K'$ sharing $e_K$ with $K$, denoted by $U_{h,K}^{ext}|_{e_K}$.
Moreover, we define the Lax-Friedrichs numerical flux approximating the flux function
$F(U) \cdot \bm{n}_K $ on the edge $e_K\in \partial K$ as
\begin{equation}\label{lf-flux}
\begin{split}
\hat{F}|_{e_K}
&=\hat{F}(U_{h,K}^{int}|_{e_K}, U_{h,K}^{ext}|_{e_K})
\\&=
\frac{1}{2}\Big{(}
\big{(}F(U_{h,K}^{int}|_{e_K})+F(U_{h,K}^{ext}|_{e_K}) \big{)} \cdot \bm{n}_K
-\alpha_h \big(U_{h,K}^{ext}|_{e_K}-U_{h,K}^{int}|_{e_K}\big)\Big{)},
\end{split}
\end{equation}
where $\alpha_h$ is the maximum absolute value of the eigenvalues of the Jacobian matrix
of $F(U_{h,K}^{int}|_{e_K})\cdot \bm{n}_K$ and $F(U_{h,K}^{ext}|_{e_K})\cdot \bm{n}_K$ taken over edge $e_K$ or over all elements.
The Jacobian of the matrix-valued flux function $F(U)\cdot \bm{n}_K$ is
\begin{equation*}\label{JF}
 F'(U)\cdot \bm{n}_K=
\begin{bmatrix*}[c]
    0 & n_x & n_y &0\\
    (\frac12gh\theta-u^2)n_x-uvn_y & 2un_x+vn_y & un_y&\frac12ghn_x\\
    (\frac12gh\theta-v^2)n_y-uvn_x & vn_x & un_x+2vn_y&\frac12ghn_y \\
    -\theta(un_x+vn_y)&\theta n_x&\theta n_y&(un_x+vn_y)\\
\end{bmatrix*},
\end{equation*}
and its eigenvalues are given by
\begin{align*}
&\lambda^{1}=un_x +vn_y -c,\quad
\lambda^{2,3}=un_x +vn_y ,
\quad\lambda^{4}=un_x +vn_y +c,
\end{align*}
where $c = \sqrt{gh\theta}$.

The semi-discrete DG scheme for the Ripa model is to find $U_h(t) \in \mathcal{V}^{k, n+1}_h$, $t \in (t_n, t_{n+1}]$, such that, for any $ \phi \in \mathcal{V}^{k, n+1}_h$
\begin{equation}\label{semi-DG-s1}
\frac{d}{d t}\int_{K}U_h \phi d\bm{x}
- \int_{K}F(U_h)\cdot \nabla \phi d\bm{x}
+ \sum_{e_K\in \partial K}\int_{e_K} \hat{F}|_{e_K} \phi  ds
= \int_{K}S(U_h,b_h)\phi d\bm{x}.
\end{equation}
Generally speaking, the above scheme does not preserve the lake-at-rest steady-state solution and thus is not well-balanced.

Following the idea of hydrostatic reconstruction \cite{Audusse-etal-2004SISC,Britton-Xing-2020JSC,Li-etal-2020AAMM}, after computing the boundary values $U_{h,K}^{int}|_{e_K}$ and $U_{h,K}^{ext}|_{e_K}$, we define
\begin{equation}\label{reconst-h}
\begin{split}
&h_{h,K}^{int,*}|_{e_K}=
\max \Big( 0, h_{h,K}^{int}|_{e_K}+b_{h,K}^{int}|_{e_K}
             -b_{h,K}^{*}|_{e_K}\Big),
\\&
h_{h,K}^{ext,*}|_{e_K} =
\max \Big( 0, h_{h,K}^{ext}|_{e_K}+b_{h,K}^{ext}|_{e_K}
             -b_{h,K}^{*}|_{e_K}
             \Big),
\\&
b_{h,K}^{*}|_{e_K} = \max\big(b_{h,K}^{int}|_{e_K},b_{h,K}^{ext}|_{e_K}\big).
\end{split}
\end{equation}
We redefine
\begin{equation}
\label{reconst-U}
U_{h,K}^{int,*}|_{e_K}= \begin{bmatrix*}[c]
     h_{h,K}^{int,*}|_{e_K}   \\
     h_{h,K}^{int,*}|_{e_K}\frac{m_{h,K}^{int}|_{e_K}}{h_{h,K}^{int}|_{e_K}}\\
     h_{h,K}^{int,*}|_{e_K}\frac{w_{h,K}^{int}|_{e_K}}{h_{h,K}^{int}|_{e_K}}\\
     h_{h,K}^{int,*}|_{e_K}\frac{\eta_{h,K}^{int}|_{e_K}}{h_{h,K}^{int}|_{e_K}}\\
 \end{bmatrix*},\quad
U_{h,K}^{ext,*}|_{e_K}= \begin{bmatrix*}[c]
     h_{h,K}^{ext,*}|_{e_K}   \\
     h_{h,K}^{ext,*}|_{e_K}\frac{m_{h,K}^{ext}|_{e_K}}{h_{h,K}^{ext}|_{e_K}}\\
     h_{h,K}^{ext,*}|_{e_K}\frac{w_{h,K}^{ext}|_{e_K}}{h_{h,K}^{ext}|_{e_K}}\\
     h_{h,K}^{ext,*}|_{e_K}\frac{\eta_{h,K}^{ext}|_{e_K}}{h_{h,K}^{ext}|_{e_K}}\\
  \end{bmatrix*}.
\end{equation}
Finally, the new numerical flux is defined as
\begin{equation}\label{new-flux}
\hat{ F}^{*}|_{e_K} =
\hat{F}(U_{h,K}^{int,*}|_{e_K}, U_{h,K}^{ext,*}|_{e_K})
+\big( F(U_{h,K}^{int}|_{e_K})-F(U_{h,K}^{int,*}|_{e_K})\big)\cdot \bm{n}_K.
\end{equation}
Replacing $\hat{F}|_{e_K}$ by $\hat{F}^{*}|_{e_K}$ in \eqref{semi-DG-s1}, we obtain the semi-discrete DG scheme as
\begin{equation}\label{semi-wbDG-s1}
\begin{split}
\frac{d}{d t}\int_{K}U_h \phi d\bm{x}
- \int_{K}F(U_h)\cdot \nabla \phi d\bm{x}
+ \sum_{e_K\in \partial K}\int_{e_K} \hat{F}^{*}|_{e_K} \phi  ds
= \int_{K}S(U_h,b_h)\phi d\bm{x}.
\end{split}
\end{equation}
This can be written as
\begin{equation}\label{semi-wbDG-ode}
\frac{d}{d t}\int_{K}U_h \phi d\bm{x}=R_{h,K}^{*}(U_h,b_h,\phi), \quad \forall \phi \in \mathcal{V}_h^{k, n+1}
\end{equation}
where
\[
R_{h,K}^{*}(U_h,b_h,\phi)
= \int_{K}S(U_h,b_h)\phi d\bm{x}
+\int_{K}F(U_h)\cdot \nabla \phi d\bm{x}
-\sum_{e_K\in \partial K}\int_{e_K} \hat{F}^{*}|_{e_K} \phi  ds .
\]

We now show the above scheme is well-balanced. Assume that the solution is at the lake-at-rest steady state
\eqref{case1}, i.e.,
\begin{equation}
\begin{split}
&h_{h,K}^{int}|_{e_K}+b_{h,K}^{int}|_{e_K}= C_2, \quad h_{h,K}^{ext}|_{e_K}+b_{h,K}^{ext}|_{e_K}=C_2, \\
&\eta_{h,K}^{int}|_{e_K}= C_1 h_{h,K}^{int}|_{e_K}, \quad\quad~ \eta_{h,K}^{ext}|_{e_K}=C_1 h_{h,K}^{ext}|_{e_K},
\\&m_{h,K}^{int}|_{e_K}=m_{h,K}^{ext}|_{e_K}=0, \quad w_{h,K}^{int}|_{e_K}=w_{h,K}^{ext}|_{e_K}=0.
 \end{split}
 \end{equation}
 Using these, \eqref{reconst-h}, and (\ref{reconst-U}),  we get $h_{h,K}^{int,*}|_{e_K} =h_{h,K}^{ext,*}|_{e_K}$
 and then $U_{h,K}^{int,*}|_{e_K} =U_{h,K}^{ext,*}|_{e_K}$.
From the consistency of the numerical flux, we have
\begin{align}
\hat{ F}^{*}|_{e_K} = F(U_{h,K}^{int}|_{e_K})\cdot \bm{n}_K.
\end{align}
Using the above results, we have
\[
\label{residue-wbDG-s1}
\begin{split}
R_{h,K}^{*}(U_h,b_h,\phi) &
= \int_{K}\big(S(U_h,b_h)-\nabla\cdot F(U_h)\big)\phi d\bm{x}
\\&\qquad
+\sum_{e_K\in \partial K}\int_{e_K} \big(F(U_{h,K}^{int}|_{e_K})\cdot \bm{n}_K- \hat{F}^{*}|_{e_K}\big) \phi  ds
\\&=0.
\end{split}
\]
From (\ref{semi-wbDG-ode}), this gives
\[
\frac{d}{d t}\int_{K}U_h \phi d\bm{x}=0, \quad \forall \phi \in \mathcal{V}_h^{k, n+1} .
\]
Hence, the semi-discrete DG scheme (\ref{semi-wbDG-s1}) (or (\ref{semi-wbDG-ode})) preserves
the lake-at-rest steady-state solution \eqref{case1} and therefore, is well-balanced.

In principle, \eqref{semi-wbDG-ode} can be integrated in time using any marching scheme.
We use a third-order strong stability preserving (SSP) Runge-Kutta scheme \cite{SSP-time}.
For $\forall K \in \mathcal{T}_h^{n+1}$, $\forall \phi \in \mathcal{V}_h^{k, n+1}$, we have
\begin{equation}\label{third}
\begin{cases}
\begin{split}
\int_{K} U_h^{(1)}\phi d\bm{x}
& = \int_{K} \tilde{U}_h^n\phi d\bm{x}
+\Delta t_n R_{h,K}^{*}(\tilde{U}_h^n,b_h^{n+1},\phi),
\\
\int_{K} U_h^{(2)}\phi d\bm{x}
& = \frac{3}{4}\int_{K} \tilde{U}_h^n\phi d\bm{x} + \frac{1}{4}\Big(
 \int_{K}U_h^{(1)}\phi d\bm{x}+\Delta t_nR_{h,K}^{*}(U_h^{(1)} ,b_h^{n+1},\phi)\Big),
\\
\int_{K}U_h^{n+1}\phi d\bm{x}
& = \frac{1}{3}\int_{K} \tilde{U}_h^n\phi d\bm{x} +\frac{2}{3}\Big(
\int_{K} U_h^{(2)}\phi d\bm{x}+\Delta t_n R_{h,K}^{*}(U_h^{(2)} ,b_h^{n+1},\phi)\Big),
\end{split}
\end{cases}
\end{equation}
where $\Delta t_n= t_{n+1}-t_{n}$ and $b_h^{n+1}$ is a polynomial approximation of the bottom topography on $\mathcal{T}_h^{n+1}$.

We now show that the fully discrete scheme \eqref{third} is well-balanced.
That is, we need to show $U_h^{n+1} = (h_h^{n+1}, m_h^{n+1}, w_h^{n+1}, \eta_h^{n+1})^T$
satisfies (\ref{case1}) if $U_h^{n} = (h_h^{n}, m_h^{n}, w_h^{n}, \eta_h^{n})^T$ satisfies (\ref{case1}).
To this end, for now we assume that the interpolation/remapping is well-balanced,
i.e., the interpolant $\tilde{U}_h^n$ satisfies (\ref{case1}).
Recall that $R^{*}_{h,K}$ vanishes for the lake-at-rest steady state.
Then, scheme \eqref{third} reduces to
\[
\label{third-lake}
\begin{cases}
\begin{split}
\int_{K} U_h^{(1)}\phi d\bm{x}
& = \int_{K} \tilde{U}_h^n\phi d\bm{x},
\\
\int_{K} U_h^{(2)}\phi d\bm{x}
& = \frac{3}{4}\int_{K} \tilde{U}_h^n\phi d\bm{x} + \frac{1}{4} \int_{K}U_h^{(1)}\phi d\bm{x},
\\
\int_{K}U_h^{n+1}\phi d\bm{x}
& = \frac{1}{3}\int_{K} \tilde{U}_h^n\phi d\bm{x} +\frac{2}{3}\int_{K} U_h^{(2)}\phi d\bm{x}.
\end{split}
\end{cases}
\]
This implies that $U_{h,K}^{n+1} \equiv U_{h,K}^{(2)} \equiv U_{h,K}^{(1)} \equiv \tilde{U}_{h,K}^n$, i.e.,
\[
\label{UUt}
h_{h,K}^{n+1} =\tilde{h}^{n}_{h,K}, \quad m_{h,K}^{n+1} = \tilde{m}_{h,K}^{n}, \quad w_{h,K}^{n+1}= \tilde{w}_{h,K}^{n},\quad \eta_{h,K}^{n+1} = \tilde{\eta}_{h,K}^{n}.
\]
From the arbitrariness of $K$, by combining the above result and the assumption that $\tilde{U}_h^n$ satisfies
(\ref{case1}), we conclude that $U_h^{n+1}$ satisfies (\ref{case1}).
Thus, the scheme \eqref{third} is well-balanced.

\vspace{10pt}

We now show that the interpolation/remapping step is well-balanced while preserving the positivity of the water depth and temperature
for the dry situation.
We first recall that we need to interpolate $U_h^n = (h_h^n, m_h^n, w_h^n, \eta_h^n)$
and $b_h^n$ from the old mesh to the new one. We use a DG-interpolation scheme (with or without the linear scaling PP limiter)
(see \S\ref{sec:DG-interp} for detail) for this purpose.
Specifically, we use
\begin{align}\label{DGInterp-U}
\begin{cases}
\tilde{h}^n_h~~ = \text{PP-DGInterp}(h_h^n),\\
\tilde{\eta}^n_h~~= \text{PP-DGInterp}(\eta_h^n),\\
\tilde{m}^n_h~ = \text{DGInterp}(m_h^n),\\
\tilde{w}^n_h~= \text{DGInterp}(w_h^n),\\
b^{n+1}_h = \text{DGInterp}(h_h^n+b_h^n)-\tilde{h}^n_h.\\
\end{cases}
\end{align}
In this step, $b^{n+1}_h$ is obtained as the difference
between $\text{DGInterp}(h_h^n+b_h^n)$ and $\tilde{h}^n_h$ (instead by simply interpolating $b_h^n$ or $b$
on the new mesh). This treatment is crucial to the preservation
of constant $(h+b)$, as demonstrated in \cite{Zhang-Huang-Qiu-2021JSC}.
Unlike the SWEs, we also need to check whether
the form $\eta = C_1 h$ is preserved or not.
Fortunately, the DG-interpolation scheme in \S\ref{sec:DG-interp}
also satisfies the linearity property (cf. Proposition \ref{LP-interp}) in addition to preserving
constant solutions (cf. Proposition \ref{WB-interp}). From these properties, we have
\[
\tilde{\eta}^n_h = \text{PP-DGInterp}(\eta_h^n) = \text{PP-DGInterp}(C_1 h_h^n) = C_1\cdot \text{PP-DGInterp}(h_h^n)
= C_1 \tilde{h}^n_h .
\]
Thus, $\eta = C_1 h$ is preserved and the interpolation/remapping step is well-balanced.

\vspace{10pt}

Since spurious oscillations and nonlinear instability can occur in numerical solutions,
we need to apply a nonlinear slope limiter after each Runge-Kutta stage.
We use a characteristic-wise TVB limiter  \cite{DG-series5} for this purpose.
However, the TVB limiter procedure applied directly to the local characteristic variables based on $\big(h_h,m_h, w_h,\eta_h\big)^T$ may destroy the well-balance property.
To avoid this difficulty, following \cite{Audusse-etal-2004SISC},
we apply the TVB limiter to the local characteristic variables based on $\big((h_h+b_h),m_h, w_h, (\eta_h + (b\theta)_h)\big)^T$ (instead of $\big(h_h,m_h, w_h,\eta_h\big)^T$) to obtain $\big((h_h+b_h)^{mod},m^{mod}_h, w^{mod}_h,(\eta_h + (b\theta)_h)^{mod}\big)^T$, and then define
\begin{equation}\label{TVB-heta}
\begin{split}
&h^{mod}_h = (h_h+b_h)^{mod}-b_h,
\\&\eta^{mod}_h = (\eta_h + (b\theta)_h)^{mod} - (b\theta)_h.
\end{split}
\end{equation}
It is worth pointing out that the TVB limiter procedure actually contains two steps: the first one is to check whether any limiting is needed based on $\big((h_h+b_h),m_h, w_h, (\eta_h + (b\theta)_h)\big)^T$ in a cell; and, if yes, the second step is to apply the TVB limiter to modify $\big((h_h+b_h),m_h, w_h, (\eta_h + (b\theta)_h)\big)^T$ in the cell.

In the above limiting process, we need to reconstruct its DG approximation $(b\theta)_h \in \mathcal{V}_h^k$
based on $h_h$, $\eta_h$, and $b_h$. If the reconstruction satisfies
\begin{equation}
\label{b-theta-1}
(b\theta)_h = C_1 b_h
\end{equation}
for the lake-at-rest steady-state solution (\ref{case1}), we can show that the above limiting process is well-balanced.
Indeed, from (\ref{b-theta-1}) we have $h_h+b_h = C_2$, and $\eta_h + (b\theta)_h = C_1 h_h + C_1 b_h = C_1C_2$
for the lake-at-rest steady state.
Since the constants $h + b = C_2$ and $\eta_h + (b\theta)_h = C_1C_2$ will not be affected by the limiting procedure, we have $(h_h+b_h)^{mod} = C_2$ and
$(\eta_h + (b\theta)_h)^{mod} = C_1 C_2$.
Combining the above results, from (\ref{TVB-heta}) and (\ref{b-theta-1}) we get
$h_h^{mod} = C_2 - b_h$ and $\eta_h^{mod} = C_1C_2 - C_1 b_h = C_1 h_h^{mod}$.
Thus, the above limiting process maintains the well-balance property.

We use a DG reconstruction of $(b\theta)_h$ based on $h_h$, $\eta_h$, and $b_h$ as
\begin{equation}
\label{Recs-btheta}
(b\theta)_{h,K}=\frac{\bar{\eta}_{h,K}}{\bar{h}_{h,K}}b_{h,K}, \quad \forall K\in \mathcal{T}_h
\end{equation}
where $\bar{\eta}_{h,K}$ and $\bar{h}_{h,K}$ are the cell averages of $\eta_h$ and $h_h$ on $K$, respectively.
It is not difficult to see that this reconstruction satisfies (\ref{b-theta-1}) for $\eta_h = C_1 h_h$.

The above described TVB limiter does not work well for problems with dry or nearly dry regions where the water depth $h$ is close to zero. Although $\eta=h\theta$ also is close to zero in those regions,
the temperature $\theta =\eta/h$ can hardly be computed accurately due to loss of significance in floating-point arithmetic.
To avoid this difficulty, for problems with dry or nearly dry regions we first identify troubled cells using the TVB limiter
based on the free water surface $(h + b)$ and then carry out the polynomial modification phase of the TVB limiter to the local characteristic variables based on $\big(h_h,m_h, w_h,\eta_h\big)^T$ on the troubled cells.

It is known that the above limiting process preserves the cell averages of $h_h$ and $\eta_h$
since the TVB limiter preserves cell averages. However, it does not necessarily preserve the nonnegativity of $h_h$ and $\eta_h$ (or $\theta$). Thus, after each application of the TVB limiter, we apply the linear scaling PP limiter \cite{Liu-Osher1996,Xing-Zhang-Shu-2010ppSWEs,Xing-Zhang-2013ppSWEs} to modify $h^{mod}_h$ and $\eta^{mod}_h$ as
\begin{equation}\label{PP-heta}
\begin{split}
&\hat{h}^{mod}_h(\bm{x}) = \lambda_{h}\big(h^{mod}_h(\bm{x}) - \bar{h}^{mod}_h \big)+\bar{h}^{mod}_h,~
\lambda_{h} = \min\Big\{1, \frac{\bar{h}^{mod}_h}{\bar{h}^{mod}_h-\min\limits_{\bm{\hat{x}}_\alpha \in G_p}h^{mod}_h(\bm{\hat{x}}_\alpha)}\Big\}\\
&\hat{\eta}^{mod}_h(\bm{x}) = \lambda_{\eta}\big(\eta^{mod}_h(\bm{x}) - \bar{\eta}^{mod}_h \big)+\bar{\eta}^{mod}_h,~
\lambda_{\eta} = \min\Big\{1, \frac{\bar{\eta}^{mod}_h}{\bar{\eta}^{mod}_h-\min\limits_{\bm{\hat{x}}_\alpha \in G_p}\eta^{mod}_h(\bm{\hat{x}}_\alpha)}\Big\}
\end{split}
\end{equation}
where $G_p$ is a set of special quadrature points \cite{Xing-Zhang-Shu-2010ppSWEs,Xing-Zhang-2013ppSWEs}. It can be verified that $\hat{h}^{mod}_h(\bm{x}_\alpha) \geq0$ and $\hat{\eta}^{mod}_h(\bm{x}_\alpha) \geq0$ for $\forall \bm{x}_\alpha \in G_p$. We now show that it preserves $\hat{\eta}^{mod}_h(\bm{x}) = C_1\hat{h}^{mod}_h(\bm{x}) $ for the still water steady state. Since the fully discrete scheme and TVB limiter procedure are all well-balanced, we have
\begin{equation}
\bar{\eta}^{mod}_h = C_1 \bar{h}^{mod}_h,\quad \eta^{mod}_h(\bm{x}) = C_1 h^{mod}_h(\bm{x}),\quad \forall \bm{x} \in \Omega .
\end{equation}
Thus, we have $\lambda_{h} =\lambda_{\eta} $, and then we have $\hat{\eta}^{mod}_h(\bm{x}) = C_1\hat{h}^{mod}_h(\bm{x})$.
However, this PP limiter does not preserve $\hat{h}^{mod}_h + b_h = C_2$ (well-balance) in general. To restore the property, following the idea of \cite{Zhang-Huang-Qiu-2021JSC}, we make a high-order correction to the approximation of $b$ according to
the changes in the water depth due to the PP limiting, i.e.,
\begin{equation}\label{B-correct}
\hat{b}_h = b_h  - (\hat{h}^{mod}_h - h^{mod}_h).
\end{equation}

\vspace{8pt}

\begin{rem}
For dry or nearly dry regions where the water height is close to zero, the velocities $u =(hu)/h$ and  $v =(hv)/h$ cannot be computed accurately due to loss of significance in floating-point arithmetic.
Following \cite{Xing-Zhang-Shu-2010ppSWEs,Xing-Zhang-2013ppSWEs,Zhang-Huang-Qiu-2021JSC}, we set $u = 0$ and $v = 0$ when $h <10^{-6}$ in our computation.
\end{rem}

\vspace{10pt}

From the above discussion,
we have seen that the interpolation/remapping step (\ref{DGInterp-U}), the limiting process (\ref{TVB-heta}),
and the physical PDE solver \eqref{third} are all well-balanced. Hence, the MM-DG method is well-balanced.
The procedure of the MM-DG method is summarized in Algorithm~\ref{MM-DG}.

\begin{breakablealgorithm}
\caption{The MM-DG method for the Ripa model on moving meshes.}\label{MM-DG}
\begin{itemize}
\item[0.] {\bf Initialization.}
Choose an initial mesh $\mathcal{T}_h^0$ and project the initial physical conservative variables $U =(h,m,w,\eta)^T$ and bottom topography function $b$ into the DG space $\mathcal{V}_h^{k,0}$ to obtain approximation polynomials $U^0_h =  (h^0_h,m^0_h,w^0_h,\eta^0_h)^T$ and $b_h^0$.

\item[] For $n = 0, 1, ...$, do

\item[1.] {\bf Mesh adaptation.}
Generate the new mesh $\mathcal{T}_h^{n+1}$ using the MMPDE method based on variable $\mathcal{E}_h^n=\frac{1}{2}\big((u_h^n)^2+(v_h^n)^2\big)+g\theta_h^n(h^n_n+b_h^n)$ and water depth $h_h^n$ (cf. \S\ref{sec:numerical-results}).

\item[2.] {\bf Solution interpolation/remapping.} Interpolate $U^n_h = (h^n_h,m^n_h,w^n_h,\eta_h^n)^T$ and $b^n_h$ from $\mathcal{T}^n_h$ to $\mathcal{T}_h^{n+1}$ using DG-interpolation to obtain $\tilde{U}^n_h = (\tilde{h}^n_h,\tilde{m}^n_h,\tilde{w}^n_h,\tilde{\eta}_h^n)^T$ and $b^{n+1}_h$ (cf. \eqref{DGInterp-U} and \S\ref{sec:DG-interp}).

\item[3.] {\bf Numerical solution of the Ripa model on the new mesh.}
Integrate the Ripa model from $t_n$ to $t_{n+1}$ on the new mesh $\mathcal{T}_h^{n+1}$ using the MM-DG scheme \eqref{third} to obtain $U^{n+1}_h = (h_h^{n+1},m^{n+1}_h,w^{n+1}_h,\eta^{n+1}_h)^T$.
After each of the RK stage, the characteristic-wise TVB limiter is applied to obtain $(h^{mod}_h, m^{mod}_h,w^{mod}_h, \eta^{mod}_h)^{T}$.
Moreover, the linear scaling PP limiter is applied to the water depth $h_h$ and temperature field $\eta_h$, and a corresponding high-order correction approximation of the bottom topography.
\end{itemize}
\end{breakablealgorithm}

\section{DG-interpolation scheme}
\label{sec:DG-interp}

In this section we briefly describe a DG-interpolation scheme \cite{Zhang-Huang-Qiu-2020SISC}
to transfer a numerical solution $q_h^n$ between deforming meshes that have the same number
of vertices and elements and the same connectivity.
The scheme works for arbitrary (large or small) mesh deformation.
It has high-order accuracy, conserves the mass, obeys the geometric conservation law (GCL), preserves constant solutions, and satisfies the linearity.
The reader is referred to \cite{Zhang-Huang-Qiu-2020SISC} for the detail of the scheme.

The interpolation/remapping problem between two deforming meshes $\mathcal{T}^n_h$ and $\mathcal{T}_h^{n+1}$
is mathematically equivalent to solving the pseudo-time PDE
\begin{equation}
\label{pde}
\frac{\partial q}{\partial \varsigma} (\bm{x},\varsigma) =0,\quad (\bm{x},\varsigma)\in
\Omega \times(0,1]
\end{equation}
on the moving mesh $\mathcal{T}_h(\varsigma)$ that is defined as the linear interpolant of $\mathcal{T}_h^{n}$
and $\mathcal{T}_h^{n+1}$.
Specifically, $\mathcal{T}_h(\varsigma)$ has the same number of elements and vertices
and the same connectivity as $\mathcal{T}_h^{n}$ and $\mathcal{T}_h^{n+1}$ and its nodal positions and
displacements (or deformation) are given by
\begin{align}
\label{location+speed}
&\bm{x}_i(\varsigma) = (1-\varsigma)\bm{x}_i^{n}+\varsigma \bm{x}_i^{n+1}, \quad i =
1,...,N_v
\\
&\dot{\bm{x}}_i = \bm{x}_i^{n+1}-\bm{x}_i^{n}, \quad i = 1,...,N_v .
\label{location+speed+1}
\end{align}
Using the Reynolds transport theorem, we can rewrite (\ref{pde}) into
\begin{equation}
\label{dg-interp-1}
\frac{d}{d \varsigma}\int_{K} q \phi d\bm{x}
- \int_{\partial K} q \phi \dot{\bm{X}} \cdot \bm{n} ds
+\int_{K} q \dot{\bm{X}} \cdot \nabla  \phi d\bm{x}=0, \quad
\forall  \phi\in \mathcal{V}^{k}_h(\varsigma)
\end{equation}
where $ \phi_i$ is the linear basis function associated with the vertex $\bm{x}_i$ and $\dot{\bm{X}}$ is
 the piecewise linear mesh deformation function defined as
\begin{equation}\label{Xdot-1}
\dot{\bm{X}}(\bm{x},\varsigma) = \sum_{i=1}^{N_v} \dot{\bm{x}}_i \phi_i(\bm{x},\varsigma) .
\end{equation}
The spatial discretization using DG and temporal discretization using the SSP Runge-Kutta scheme for (\ref{dg-interp-1})
are similar to that for the Ripa model (\ref{ripa-form-2}). For this reason, we omit the derivation and formula of the scheme here
and refer the interested reader to \cite{Zhang-Huang-Qiu-2020SISC} for the detail.
We mention that a local Lax-Friedrichs numerical flux is used and $\Delta \varsigma$ is chosen
according to the CFL condition
\begin{equation}
\label{DG-interp-cfl}
\Delta \varsigma = \frac{\mathcal{C}_{p}}{\max\limits_{e,K}|\dot{\bm{X}}^{e}\cdot \bm{n}^e_K |}
\cdot\min\big(a^{n}_{min},a^{n+1}_{min}\big) ,
\end{equation}
where $\mathcal{C}_{p}$ is the CFL condition constant number and usually taken less than $1/(2k+1)$ and $a^{n}_{min}$ and $a^{n+1}_{min}$ are the minimum element heights of $\mathcal{T}_h^n$ and $\mathcal{T}_h^{n+1}$, respectively.
Moreover, the volume of mesh elements need to be updated at each stage of the RK scheme so that the geometric conservation law
is not violated.

The interpolation/remapping scheme has the following properties.

\begin{prop}[\cite{Zhang-Huang-Qiu-2020SISC}]
\label{fully-mass}
The DG-interpolation scheme conserves the mass, i.e.,
\begin{align}
\label{f3-4}
\sum_{K^{\nu+1}}\int_{K^{\nu+1}} q_h^{\nu+1}  d\bm{x}
=\sum_{K^{\nu}}\int_{K^{\nu}} q_h^{\nu}  d\bm{x}, \quad \nu = 0, 1, ...
\end{align}
\end{prop}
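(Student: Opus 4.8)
The plan is to test the semi-discrete interpolation scheme (\ref{dg-interp-1}) against the constant function $\phi \equiv 1$, which lies in $\mathcal{V}_h^k(\varsigma)$ because $\mathbb{P}^k(K)$ contains the constants for every $k \geq 0$. With this choice the gradient term $\int_{K} q \dot{\bm{X}}\cdot\nabla\phi\, d\bm{x}$ vanishes identically, and what remains is a balance between the rate of change of the cell mass $\frac{d}{d\varsigma}\int_{K} q_h\, d\bm{x}$ and the boundary integral $\int_{\partial K} q_h\, \dot{\bm{X}}\cdot\bm{n}\, ds$. In the actual DG discretization this boundary term is evaluated through the local Lax-Friedrichs numerical flux $\hat{f}|_{e_K}$ rather than through a two-sided trace, so that after testing with $\phi\equiv 1$ the cell equation reduces to $\frac{d}{d\varsigma}\int_{K} q_h\, d\bm{x} = \sum_{e_K \in \partial K}\int_{e_K}\hat{f}|_{e_K}\, ds$.

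Next I would sum this identity over all elements $K$ of $\mathcal{T}_h(\varsigma)$. The key cancellation is that each interior edge $e$ is shared by exactly two neighboring cells whose outward unit normals are opposite; because the numerical flux is single-valued (consistent) on $e$, the two edge contributions cancel in the sum. The only surviving terms are those on edges lying on $\partial\Omega$. Since the interpolation is carried out on a fixed physical domain $\Omega$, the boundary nodes move along $\partial\Omega$, so the mesh velocity is tangential there, i.e. $\dot{\bm{X}}\cdot\bm{n} = 0$ on $\partial\Omega$, and these boundary contributions vanish as well. This yields the semi-discrete statement $\frac{d}{d\varsigma}\sum_{K}\int_{K} q_h\, d\bm{x} = 0$, i.e. the total mass $\sum_{K}\int_{K} q_h\, d\bm{x}$ is invariant in the pseudo-time $\varsigma$.

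It remains to lift this to the fully discrete scheme. I would use the fact that the SSP Runge-Kutta update is a convex combination of forward-Euler substeps, so it suffices to check that a single forward-Euler substep preserves the total mass. Applying the constant test function $\phi\equiv 1$ to the discrete stage equation, each cell update reads $\int_{K} q_h^{\text{new}}\, d\bm{x} = \int_{K} q_h^{\text{old}}\, d\bm{x} + \Delta\varsigma\sum_{e_K}\int_{e_K}\hat{f}\, ds$, and upon summing over $K$ the flux contributions cancel exactly as above. The convex-combination structure of the scheme then propagates conservation through all intermediate stages, giving the telescoped equality (\ref{f3-4}) between consecutive pseudo-time levels.

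The delicate point I expect to be the main obstacle is the bookkeeping of the cell volumes across stages on the deforming mesh: the integrals $\int_{K}\cdot\, d\bm{x}$ are taken on different mesh configurations at different RK stages, and unless the element volumes are updated consistently with the mesh motion, the telescoping that produces exact conservation can break down. This is precisely why the element volumes must be re-evaluated at each RK stage in accordance with the geometric conservation law, as noted after (\ref{DG-interp-cfl}); with that consistent volume update in place, the cancellation of interior fluxes together with the vanishing of the boundary fluxes on the fixed domain suffices to close the argument.
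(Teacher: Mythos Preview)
Your argument is correct and is exactly the standard route: test with $\phi\equiv 1$, use single-valuedness of the numerical flux to cancel interior-edge contributions, use $\dot{\bm X}\cdot\bm n=0$ on $\partial\Omega$ to kill the boundary terms, and then propagate the identity through the SSP-RK stages via their convex-combination structure, with the GCL-consistent volume update ensuring the stage integrals telescope. Note that the present paper does not actually prove this proposition---it is stated with a citation to \cite{Zhang-Huang-Qiu-2020SISC} and no proof is given here---so there is no in-paper argument to compare against; your sketch is the one you would find in that reference.
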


\begin{prop}[\cite{Zhang-Huang-Qiu-2020SISC}]
\label{WB-interp}
The DG-interpolation scheme  preserves constant solutions,
i.e., for any arbitrary constant $C$, $q_h^{\nu} \equiv C$ implies $q_h^{\nu+1} \equiv C$.
\end{prop}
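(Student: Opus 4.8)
The plan is to show that the globally constant function $q_h(\varsigma)\equiv C$ is an \emph{exact} solution of the fully discrete DG-interpolation scheme, and to reduce the whole statement to a discrete geometric conservation law (GCL). I would argue in two stages: first for the spatial (semi-discrete) discretization of (\ref{dg-interp-1}), and then lift the result through the SSP Runge--Kutta time stepping.

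First I would substitute $q_h\equiv C$ into the DG discretization of (\ref{dg-interp-1}). Because $q_h$ is globally constant, the jump $q_{h,K}^{ext}-q_{h,K}^{int}$ across every interior edge vanishes, so the dissipative part of the local Lax--Friedrichs flux drops out and, by consistency, the numerical flux reduces to the exact value $C\,\dot{\bm{X}}\cdot\bm{n}$ on each edge. The semi-discrete residual for the element $K$ then factors as $C$ times
\[
\frac{d}{d\varsigma}\int_K \phi\, d\bm{x} - \int_{\partial K}\phi\,\dot{\bm{X}}\cdot\bm{n}\, ds + \int_K \dot{\bm{X}}\cdot\nabla\phi\, d\bm{x},
\]
so it suffices to show this quantity vanishes for every basis function $\phi$ and every $K$. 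Applying the divergence theorem and the product rule, the two $\dot{\bm{X}}$ terms combine into $\int_K \phi\,(\nabla\cdot\dot{\bm{X}})\, d\bm{x}$, so the identity to be established becomes
\[
\frac{d}{d\varsigma}\int_K\phi\, d\bm{x} = \int_K\phi\,(\nabla\cdot\dot{\bm{X}})\, d\bm{x} .
\]
Since the DG basis functions are attached to the moving elements and are therefore materially constant along the mesh trajectories (\ref{location+speed})--(\ref{Xdot-1}), the Reynolds transport theorem gives exactly this equality, establishing the continuous GCL and hence the vanishing of the residual for constant data.

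At the discrete level the same cancellation must survive the numerical quadrature and the time integration; this is precisely why the element volumes are re-evaluated at each Runge--Kutta stage, as noted after (\ref{DG-interp-cfl}). I expect this \emph{discrete} GCL to be the main obstacle: one must verify that the numerically computed change of $\int_K\phi\, d\bm{x}$ over a stage matches the discretized boundary and volume contributions exactly, not merely to the order of accuracy. The argument rests on choosing the volume update consistently with the piecewise-linear deformation $\dot{\bm{X}}$ in (\ref{Xdot-1}), so that the discrete analogue of the transport identity above holds identically.

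Finally I would lift constant-preservation from a single forward-Euler substep to the full third-order SSP scheme. Each SSP stage is a convex combination of previous stage values and a forward-Euler update; since the forward-Euler residual vanishes identically for $q_h\equiv C$ (by the two preceding steps), the weak relation $\int_K q_h^{(1)}\phi\, d\bm{x}=\int_K C\phi\, d\bm{x}$ for all $\phi$ forces $q_h^{(1)}\equiv C$ by uniqueness of the $L^2$ projection on the current configuration, and likewise for the subsequent stages whose right-hand sides are convex combinations of quantities all representing $C$. Combining the stages yields $q_h^{\nu+1}\equiv C$, which is the claim.
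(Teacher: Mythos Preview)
The paper does not actually prove Proposition~\ref{WB-interp}; it merely records it as a known property of the DG-interpolation scheme and attributes the proof to the cited reference \cite{Zhang-Huang-Qiu-2020SISC}. So there is no in-paper argument to compare against.

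That said, your outline is the correct and standard route to this result. Substituting $q_h\equiv C$ kills the jump term in the Lax--Friedrichs flux, factors $C$ out of the residual, and reduces the claim to the discrete GCL; the Reynolds-transport identity you wrote for the semi-discrete scheme is exactly right, and your observation that the discrete GCL hinges on updating the element volumes stage by stage matches the comment the paper makes just after~\eqref{DG-interp-cfl}. The only place your write-up is a sketch rather than a proof is the discrete GCL itself: you identify it as ``the main obstacle'' and say what it \emph{rests on}, but you do not actually verify that the specific volume-update rule used in \cite{Zhang-Huang-Qiu-2020SISC} makes the stage-level identity exact (rather than merely high-order accurate). That verification needs the explicit update formulas from the reference and a short computation showing that, for piecewise-linear $\dot{\bm X}$ on simplices, the Jacobian-determinant evolution is reproduced exactly by the chosen quadrature and RK stages. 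Once that is in hand, your convex-combination argument for the SSP scheme closes the proof without issue.
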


\begin{prop}
\label{LP-interp}
The DG-interpolation scheme is linear in the sense that for any constants $C_1$ and $C_2$ and any functions $q_h, \, p_h \in \mathcal{V}_h^k$, the scheme satisfies
\begin{equation}
\text{{\em DGInterp}}(C_1\cdot q_h + C_2\cdot p_h) = C_1\cdot\text{{\em DGInterp}}(q_h) + C_2\cdot\text{{\em DGInterp}}(p_h).
\end{equation}
\end{prop}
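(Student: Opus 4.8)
The plan is to show that the DG-interpolation operator is the composition of a fixed family of \emph{linear} maps acting on the initial data, so that linearity of the whole scheme follows at once. The starting point is the weak formulation \eqref{dg-interp-1}: the pseudo-time flux is $q\,\dot{\bm{X}}$ with $\dot{\bm{X}}$ the mesh deformation \eqref{Xdot-1} determined solely by the two meshes $\mathcal{T}_h^n$ and $\mathcal{T}_h^{n+1}$ through the nodal trajectory \eqref{location+speed}. The crucial observation I would establish first is that \emph{none} of the geometric ingredients of the scheme depends on the interpolated quantity: the deformation velocity $\dot{\bm{X}}$, the local Lax-Friedrichs penalty coefficient (which equals $|\dot{\bm{X}}\cdot\bm{n}|$ on each edge), the mass matrices $\int_K \phi_i\phi_j\,d\bm{x}$ on $\mathcal{T}_h(\varsigma)$, the element volumes updated for the GCL, and the step size $\Delta\varsigma$ from \eqref{DG-interp-cfl}, all depend only on the pair of meshes, which is the same whether we interpolate $q_h$, $p_h$, or $C_1 q_h + C_2 p_h$.

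Second, I would show that the semi-discrete spatial residual operator is linear. Writing the right-hand side of the DG discretization of \eqref{dg-interp-1} as $\mathcal{L}_\varsigma(q_h)$, each of its contributions --- the volume term $\int_K q_h\,\dot{\bm{X}}\cdot\nabla\phi\,d\bm{x}$ and the numerical-flux term built from the local Lax-Friedrichs flux --- is linear in the modal coefficients of $q_h$, because both the physical flux $q_h\,\dot{\bm{X}}\cdot\bm{n}$ and the penalty term $\tfrac12|\dot{\bm{X}}\cdot\bm{n}|\,(q^{ext}-q^{int})$ are linear in the boundary traces of $q_h$ with mesh-only coefficients. Inverting the (solution-independent) mass matrix to extract the stage update preserves linearity, so $\mathcal{L}_\varsigma$ is a linear operator on $\mathcal{V}_h^{k}(\varsigma)$.

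Third, I would propagate linearity through the SSP Runge-Kutta steps. With no source term present in \eqref{dg-interp-1}, each RK stage value is a fixed linear combination of previous stage values and of $\Delta\varsigma\,\mathcal{L}_\varsigma$ applied to them; since each such operation is linear and uses the same coefficients and the same $\Delta\varsigma$ for all three interpolation runs, every stage, and hence the end-of-step value, is a linear function of the input data. Iterating over the identical sequence of pseudo-time steps, the full DG-interpolation map $q_h^n \mapsto \tilde{q}_h^n$ is a composition of linear maps and is therefore linear, which yields the claimed identity $\text{DGInterp}(C_1 q_h + C_2 p_h) = C_1\,\text{DGInterp}(q_h) + C_2\,\text{DGInterp}(p_h)$.

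The main obstacle --- in fact the only point that genuinely requires care --- is verifying that all three interpolation processes march through \emph{exactly} the same pseudo-time discretization. Because $\Delta\varsigma$ in \eqref{DG-interp-cfl} and the penalty $\alpha$ are dictated by the mesh geometry alone, the three runs share identical step counts, step sizes, mass matrices, and flux coefficients, so the stagewise linear combinations align term by term and the linear-combination identity commutes with every stage. I would remark that this is precisely where the pure-advection nature of \eqref{pde} matters: were $\alpha$ instead taken as a solution-dependent maximal wave speed, as in the Ripa solver \eqref{lf-flux}, the coefficients would differ between runs and the argument would fail; for the interpolation PDE this does not occur.
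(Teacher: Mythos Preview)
Your argument is correct and is essentially the approach the paper takes: the paper's proof is a single sentence stating that linearity ``can be seen readily from the formula of the scheme,'' and your proposal simply unpacks that formula, making explicit that the mesh velocity $\dot{\bm{X}}$, the Lax--Friedrichs coefficient $|\dot{\bm{X}}\cdot\bm{n}|$, the mass matrices, and the CFL step $\Delta\varsigma$ are all determined by the mesh pair alone, so that every stage of the SSP-RK update is a fixed linear map in the data. Your final remark that solution-dependent $\alpha$ would break the argument is a nice clarifying observation but is not needed for the proof itself.
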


\begin{proof}
The linearity of the scheme can be seen readily from the formula of the scheme \cite{Zhang-Huang-Qiu-2020SISC}.
\end{proof}

\begin{prop}
[\cite{Zhang-Huang-Qiu-2020SISC}]
\label{PP-interp}
The DG-interpolation scheme with the linear scaling PP limiter
\cite{Liu-Osher1996,ZhangShu2010,ZhangXiaShu2012} preserves the solution positivity
in the sense that, for $\nu = 0, 1, ...$, if $q_h^{\nu}$ has nonnegative cell averages for all elements
and is nonnegative at a set of special points $G_{K^{\nu}}$ at each element $K^{\nu}\in\mathcal{T}_h(\varsigma^\nu)$,
then $q_h^{\nu+1}$ has nonnegative cell averages for all elements
and is nonnegative at the corresponding set of special points $G_{K^{\nu+1}}$ at each element of $K^{\nu+1}\in\mathcal{T}_h(\varsigma^{\nu+1})$.
\end{prop}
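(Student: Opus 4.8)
The plan is to follow the positivity-preserving framework of Zhang and Shu \cite{ZhangShu2010}, adapted to the moving-mesh setting of the DG-interpolation scheme, and to argue by induction on $\nu$. Since the temporal discretization is an SSP Runge-Kutta scheme, each of its stages is a convex combination of forward-Euler updates of (\ref{dg-interp-1}); because a convex combination of nonnegative cell averages is again nonnegative and the linear scaling PP limiter is applied after each stage, it suffices to prove that a single forward-Euler step, followed by the PP limiter, takes a state that is nonnegative at the special points $G_{K^{\nu}}$ with nonnegative cell averages to one enjoying the same properties on $\mathcal{T}_h(\varsigma^{\nu+1})$.

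First I would establish the nonnegativity of the updated cell averages. Taking the test function $\phi\equiv 1$ in (\ref{dg-interp-1}) yields the evolution equation for $\bar{q}_K$, in which the only spatial coupling enters through the numerical flux approximating the transport flux $q\,\dot{\bm{X}}\cdot\bm{n}$ on $\partial K$. The central observation is that the local Lax-Friedrichs flux for this linear flux is monotone—nondecreasing in the interior trace and nonincreasing in the exterior trace—provided the numerical viscosity coefficient dominates $|\dot{\bm{X}}\cdot\bm{n}|$, which is exactly how it is chosen. Using a Gauss-Lobatto quadrature on $K$ whose node set is the special point set $G_{K^{\nu}}$, the forward-Euler cell-average update can then be rewritten as a convex combination of the point values $q_h^{\nu}(\bm{\hat{x}}_\alpha)$, $\bm{\hat{x}}_\alpha\in G_{K^{\nu}}$, and monotone flux terms. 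Under the CFL condition (\ref{DG-interp-cfl}), all coefficients in this combination are nonnegative and sum to one, so nonnegativity of $q_h^{\nu}$ at the special points forces $\bar{q}_K^{\nu+1}\ge 0$.

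Next I would handle the effect of the moving mesh. Unlike the fixed-mesh case, the element volume $|K(\varsigma)|$ varies along the pseudo-time, so the decomposition above must track the time-dependent volume factors and the mesh-velocity flux $q\,\dot{\bm{X}}\cdot\bm{n}$. Here the geometric conservation law, enforced by updating the element volumes at each RK stage (as noted after (\ref{DG-interp-cfl})), guarantees that the constant state is reproduced exactly and, equivalently, that the convex-combination coefficients are properly normalized; this is what makes the nonnegative-coefficient argument go through. Finally, with $\bar{q}_K^{\nu+1}\ge 0$ in hand, applying the linear scaling PP limiter $\hat{q}_h = \lambda(q_h-\bar{q}_h)+\bar{q}_h$ with $\lambda = \min\{1,\ \bar{q}_h/(\bar{q}_h-\min_{\bm{\hat{x}}_\alpha\in G_{K^{\nu+1}}} q_h(\bm{\hat{x}}_\alpha))\}$ leaves the cell average unchanged and enforces $\hat{q}_h(\bm{\hat{x}}_\alpha)\ge 0$ for all $\bm{\hat{x}}_\alpha\in G_{K^{\nu+1}}$, which closes the induction.

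The main obstacle I anticipate is the second step: on a fixed mesh the convex decomposition of the cell-average update into Gauss-Lobatto point values is the classical Zhang-Shu identity, but on the deforming mesh one must carefully incorporate the varying $|K(\varsigma)|$ and the mesh-motion flux so that the coefficients remain nonnegative and correctly normalized under (\ref{DG-interp-cfl}). Verifying this decomposition, together with checking that the chosen viscosity coefficient indeed dominates $|\dot{\bm{X}}\cdot\bm{n}|$ on every edge for every intermediate mesh $\mathcal{T}_h(\varsigma)$, is the genuinely new and delicate part of the argument relative to the standard positivity-preserving DG theory; the remaining details can be found in \cite{Zhang-Huang-Qiu-2020SISC}.
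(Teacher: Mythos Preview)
The paper does not supply a proof of this proposition at all: it is stated with a citation to \cite{Zhang-Huang-Qiu-2020SISC} and nothing further, so there is no in-paper argument to compare your proposal against. Your sketch---reduce via the SSP property to a single forward-Euler step, use monotonicity of the local Lax--Friedrichs flux together with a Zhang--Shu convex decomposition over the special quadrature set $G_{K}$ to show nonnegativity of the updated cell averages under the CFL restriction \eqref{DG-interp-cfl}, invoke the GCL to normalize the coefficients on the deforming mesh, and then apply the linear scaling limiter to restore pointwise nonnegativity---is precisely the strategy of \cite{Zhang-Huang-Qiu-2020SISC}, which you yourself cite for the details. In that sense your proposal is correct and aligned with the intended argument; it simply goes further than the present paper, which is content to quote the result.
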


The DG-interpolation with the linear scaling PP limiter will be denoted as the PP-DG-interpolation.

\begin{prop}\label{SM-PP-interp}
For any arbitrary constant $C\geq0$, for any $q_h \in \mathcal{V}_h^k$, the PP-DG-interpolation scheme satisfies
\begin{equation}
\text{{\em PP-DGInterp}}(Cq_h) = C\cdot\text{{\em PP-DGInterp}}(q_h).
\end{equation}
Particularly, the PP-DG-interpolation scheme preserves constant solutions.
\end{prop}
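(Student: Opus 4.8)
The plan is to exploit the fact that the PP-DG-interpolation scheme is nothing but the unlimited DG-interpolation pseudo-time evolution \eqref{dg-interp-1}--\eqref{DG-interp-cfl} with the linear scaling PP limiter inserted after each Runge-Kutta substep. Accordingly, I would establish the scaling identity by proving separately that (i) each DG-interpolation substep commutes with multiplication by a nonnegative constant, and (ii) the linear scaling PP limiter commutes with such multiplication, and then compose these two facts by induction over the pseudo-time substep index $\nu = 0, 1, \dots$.

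For step (i), the required homogeneity is already contained in Proposition \ref{LP-interp}: taking $C_2 = 0$ there gives $\text{DGInterp}(C q_h) = C\,\text{DGInterp}(q_h)$ for every constant $C$, so each unlimited substep scales as desired. This part is therefore immediate once the linearity of the scheme has been recorded.

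For step (ii), I would examine the scaling factor appearing in the limiter \eqref{PP-heta}. Fix an element $K$, write $\bar q_{h,K}$ for the cell average of $q_h$ on $K$ and $q_{\min} = \min_{\hat{\bm{x}}_\alpha \in G_p} q_h(\hat{\bm{x}}_\alpha)$ for its pointwise minimum over the special quadrature set. Replacing $q_h$ by $C q_h$ with $C > 0$, the cell average becomes $C\bar q_{h,K}$ while the minimum becomes $C q_{\min}$; here the hypothesis $C \geq 0$ is essential, since it is precisely nonnegativity of $C$ that guarantees $\min_\alpha C q_h(\hat{\bm{x}}_\alpha) = C\,\min_\alpha q_h(\hat{\bm{x}}_\alpha)$ rather than a maximum. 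Consequently the common factor $C$ cancels in the ratio $\bar q_{h,K}/(\bar q_{h,K} - q_{\min})$, the limiter coefficient $\lambda$ is unchanged, and substituting back into the scaling formula yields $\widehat{C q_h}(\bm{x}) = C\,\hat q_h(\bm{x})$. The case $C = 0$ is trivial because $C q_h \equiv 0$ and both sides of the asserted identity vanish.

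Combining (i) and (ii), a single substep of the PP-DG-interpolation commutes with multiplication by $C \geq 0$, and an induction on the substep index propagates this through the whole scheme, giving $\text{PP-DGInterp}(C q_h) = C\,\text{PP-DGInterp}(q_h)$. For the preservation of constants I would then apply this identity with $q_h \equiv 1$: the unlimited scheme preserves the constant $1$ by Proposition \ref{WB-interp}, and the PP limiter leaves the already-nonnegative constant $1$ untouched (its coefficient is $\lambda = 1$), so $\text{PP-DGInterp}(1) = 1$, whence $\text{PP-DGInterp}(C) = C\,\text{PP-DGInterp}(1) = C$ for any $C \geq 0$. I expect the only delicate point to be the scaling behavior of the pointwise minimum in the limiter coefficient, together with the degenerate branch $\bar q_{h,K} = q_{\min} = 0$ (where the formula must be read as $\lambda = 1$); both rest on the sign restriction $C \geq 0$, which is exactly why the proposition is stated for nonnegative $C$ only.
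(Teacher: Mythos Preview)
The paper states Proposition~\ref{SM-PP-interp} without proof, so there is no argument in the paper to compare against. Your proposal is a correct and natural way to supply the missing details: you decompose the PP-DG-interpolation into the unlimited substep plus the scaling limiter, invoke Proposition~\ref{LP-interp} for the homogeneity of the former, and verify directly that the limiter coefficient $\lambda$ in \eqref{PP-heta} is invariant under $q_h \mapsto C q_h$ because both the cell average and the pointwise minimum over $G_p$ scale by the same factor when $C \ge 0$. The induction over pseudo-time substeps and the deduction of constant preservation from $\text{PP-DGInterp}(1)=1$ are both sound.

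One minor caveat worth making explicit in a finished write-up: the PP limiter is only well defined when the cell averages are nonnegative (cf.\ Proposition~\ref{PP-interp}), so the identity $\text{PP-DGInterp}(Cq_h)=C\,\text{PP-DGInterp}(q_h)$ should be understood for inputs $q_h$ to which the PP-DG-interpolation applies in the first place. You already flag the degenerate branch $\bar q_{h,K}=q_{\min}=0$; the convention $\lambda=1$ there is standard and harmless. With that understood, your argument is complete.
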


\section{Numerical examples}
\label{sec:numerical-results}
In this section we present numerical results obtained with the $P^2$ MM-DG method described in the previous sections for a selection of one- and two-dimensional examples of the Ripa model on triangular moving meshes. Unless otherwise stated, the CFL number is taken as $0.18$ and $0.1$ in one and two dimensions, respectively.
For the TVB limiter, the constant $M_{tvb}$ is taken as zero except for the accuracy test in Example~\ref{test2-1d} to avoid the accuracy order reduction near the extrema.
The gravitational acceleration constant is taken as $g = 1$ in the computation.
For examples where the analytical exact solution is unavailable, the numerical solution obtained by the $P^2$-DG method with a fixed mesh of $N=3000$ is taken as a reference solution.

For mesh movement, we use the MMPDE moving mesh method; e.g., see
\cite[Section 4]{Zhang-Huang-Qiu-2021JSC} for a brief yet complete description of the method and
\cite{Huang-Kamenski-2015JCP,Huang-Kamenski-2018MC,Huang-etal-1994Siam,Huang-Russell-2011,Huang-Sun-2003JCP}
for a more detailed description and a development history.
A key idea of the MMPDE method is to view any nonuniform mesh as a uniform one
in some Riemannian metric specified by a tensor $\mathbb{M} = \mathbb{M}(\bm{x},t)$,
a symmetric and uniformly positive definite matrix-valued function that  provides the information needed for
determining the size, shape, and orientation of the mesh elements throughout the domain.

In this work we use an optimal metric tensor based on the $L^2$-norm of piece linear interpolation error
\cite{Huang-Russell-2011,Huang-Sun-2003JCP}.
To be specific, we consider a physical variable $q$ and its finite element approximation $q_h$.
Let $H_K$ be a recovered Hessian of $q_h$ on $K\in \mathcal{T}_{h}$ such as one obtained
using least squares fitting. Assuming that the eigen-decomposition of $H_K$ is given by
\[
H_K = Q\hbox{diag}(\lambda_1,\lambda_2)Q^T,
\]
where $Q$ is an orthogonal matrix, we define
\[
|H_K| = Q\hbox{diag}(|\lambda_1|,|\lambda_2|)Q^T.
\]
The metric tensor is defined as
\begin{equation}
\label{M-u}
\mathbb{M}_{K} =\det \big{(}\beta_{h}\mathbb{I}+|H_K|\big{)}^{-\frac{1}{6}}
\big{(}\beta_{h}\mathbb{I}+|H_K|\big{)},
\quad \forall K \in \mathcal{T}_h
\end{equation}
where $\mathbb{I}$ is the identity matrix, $\det(\cdot)$ is the determinant of a matrix,
and $\beta_{h}$ is a regularization parameter defined through the algebraic equation
\[
\sum_{K\in\mathcal{T}_h}|K|\, \hbox{det}(\beta_{h}\mathbb{I}+|H_K|)^{\frac{1}{3}}
=2\sum_{K\in\mathcal{T}_h}|K|\,
\hbox{det}(|H_K|)^{\frac{1}{3}}.
\]
Roughly speaking, the choice of \eqref{M-u} is to concentrate mesh points in regions where
the determinant of the Hessian is large.

Follow the idea of \cite[Section 4]{Zhang-Huang-Qiu-2021JSC}, we compute the metric tensor based on the equilibrium variable $\mathcal{E}=\frac{1}{2}(u^2+v^2)+g\theta(h+b)$
and the water depth $h$ so that the mesh adapts to the features in the water flow and the bottom topography. Instead of using
$\mathcal{E}$ and $h$ directly as in \cite{Zhang-Huang-Qiu-2021JSC}, we use  $\ln(\mathcal{E})$ and $\ln(h)$ here
in attempt to minimize the effect of dimensional difference between $\mathcal{E}$ and $h$.
More specifically, we first compute $\mathbb{M}^{\mathcal{E}}_{K} $ and $\mathbb{M}^{h}_{K} $ using (\ref{M-u}) with $q = \ln(\mathcal{E})$ and $\ln(h)$, respectively.
Then, a new metric tensor is obtained through matrix intersection \cite{Zhang-Cheng-Huang-Qiu-2020CiCP} as
\begin{equation}\label{M-Srho}
\mathbb{M}_{K}
={\mathbb{M}}^{\mathcal{E}}_{K}
\cap (\delta\cdot \mathbb{M}^{h}_{K}),
\end{equation}
where the parameter $\delta$ is taken as $0.1$ and $1$ in our computation for one- and two-dimensions, respectively.

It has been shown analytically and numerically in \cite{Huang-Kamenski-2018MC}
that the moving mesh generated by the MMPDE method stays nonsingular (free of tangling)
if the metric tensor is bounded and the initial mesh is nonsingular.
Moreover, a number of other moving mesh methods have been developed as well; e.g., see
the books \cite{Bai94a,Huang-Russell-2011} and reviews \cite{Baines-2011,BHR09,Tan05} and references therein.


\begin{example}\label{test1-1d}
(The lake-at-rest steady-state flow test for the 1D Ripa model.)
\end{example}
We choose this example to test the well-balance property of the MM-DG method with smooth and discontinuous topography functions. First, we consider the lake-at-rest steady state and discontinuous bottom topography function as
\begin{equation}\label{test1-B1}
\begin{split}
&h+b=2,\quad u =0, \quad \theta=10,\\
&b(x)=
\begin{cases}
1,& \text{for } x \in (0.3, 0.7) \\
0,& \text{for } x \in (0,0.3) \cup (0.7, 1).\\
\end{cases}
\end{split}
\end{equation}
Next, we consider the lake-at-rest steady state with a two bumps bottom function on $(-2,2)$ as
\begin{equation}\label{test1-B2}
\begin{split}
&h+b=6,\quad u =0, \quad \theta=4,\\
&b(x) = \begin{cases}
0.85(\cos(10\pi(x+0.9))+1),& \text{for } x \in (-1, -0.8) \\
1.25(\cos(10\pi(x-0.4))+1),& \text{for } x \in (0.3, 0.5) \\
0,& \text{otherwise.}
\end{cases}
\end{split}
\end{equation}

We expect that the steady-state solution is preserved since the MM-DG method is well-balanced.
The final simulation time is $t=1$.
To show that the well-balance property is attained up to the level of round-off error (double precision in MATLAB), we present the $L^1$ and $L^\infty$ error for $h+b$, $hu$ and $h\theta$ at $t = 1$ in Tables~\ref{tab:test1-1d-error-d} and~\ref{tab:test1-1d-error-s} for the discontinuous bottom topography \eqref{test1-B1} and the smooth bottom topography \eqref{test1-B2}, respectively.
The results clearly show that the MM-DG method is well-balanced.

\begin{table}[h]
\caption{Example \ref{test1-1d} with initial data \eqref{test1-B1}. Well-balance test for the $P^2$ MM-DG method.}
\vspace{3pt}
\centering
\label{tab:test1-1d-error-d}
\begin{tabular}{ccccccc}
 \toprule
 & \multicolumn{2}{c}{$h+b$}&\multicolumn{2}{c}{$hu$}&\multicolumn{2}{c}{$h\theta$}\\
$N$&$L^1$-error  &$L^{\infty}$-error  &$L^1$-error  &$L^{\infty}$-error&$L^1$-error  &$L^{\infty}$-error \\
\midrule
50	&	7.22E-14	&	1.02E-13	&	2.11E-14	&	5.07E-14	&	3.28E-13	&	7.83E-13	\\
100	&	1.12E-14	&	2.10E-14	&	2.59E-14	&	6.34E-14	&	1.20E-13	&	2.64E-13	\\
 \bottomrule	
\end{tabular}
\end{table}

\begin{table}[h]
\caption{Example \ref{test1-1d} with initial data \eqref{test1-B2}.
Well-balance test for the $P^2$ MM-DG method.}
\vspace{3pt}
\centering
\label{tab:test1-1d-error-s}
\begin{tabular}{ccccccc}
 \toprule
 & \multicolumn{2}{c}{$h+b$}&\multicolumn{2}{c}{$hu$}&\multicolumn{2}{c}{$h\theta$}\\
$N$&$L^1$-error  &$L^{\infty}$-error  &$L^1$-error  &$L^{\infty}$-error&$L^1$-error  &$L^{\infty}$-error \\
\midrule
50	&	1.79E-14	&	2.67E-14	&	4.58E-14	&	9.58E-14	&	2.71E-17	&	1.42E-15	\\
100	&	4.34E-14	&	6.89E-14	&	1.92E-13	&	4.07E-13	&	1.02E-16	&	7.88E-15	\\
 \bottomrule	
\end{tabular}
\end{table}

\begin{example}\label{test2-1d}
(The accuracy test for the 1D Ripa model.)
\end{example}

In this example we verify the high-order accuracy of the well-balanced MM-DG method.
The bottom topography is a sinusoidal hump
\begin{equation*}
b(x)=\sin^2(\pi x),\quad x \in (0,1).
\end{equation*}
Periodic boundary conditions are used for all unknown variables.
The initial conditions are given by
\begin{equation*}
\begin{split}
h(x,0)=5+e^{\sin(2\pi x)},
\quad u(x,0)=\frac{\sin(\cos(2\pi x))}{5+e^{\sin(2\pi x)}},
\quad \theta(x,0)=\sin(2\pi x)+2.
\end{split}
\end{equation*}
The final simulation time is $t=0.04$ while the solution remains smooth.
A reference solution is obtained using the $P^2$-DG method with a fixed mesh of $N=20000$.
The $L^1$ and $L^\infty$ norms of the error with moving and fixed meshes for $h$, $hu$, and $h\theta$ are plotted in Fig.~\ref{Fig:test2-1d}.
It can be seen that the MM-DG method has the expected third-order for $P^2$-DG in both $L^1$ and $L^\infty$ norm.
Moreover, the error is comparable for fixed and moving meshes since the solution is smooth.

\begin{figure}[H]
\centering
\subfigure[errors of $h$]{
\includegraphics[width=0.31\textwidth,trim=50 0 65 10,clip]
{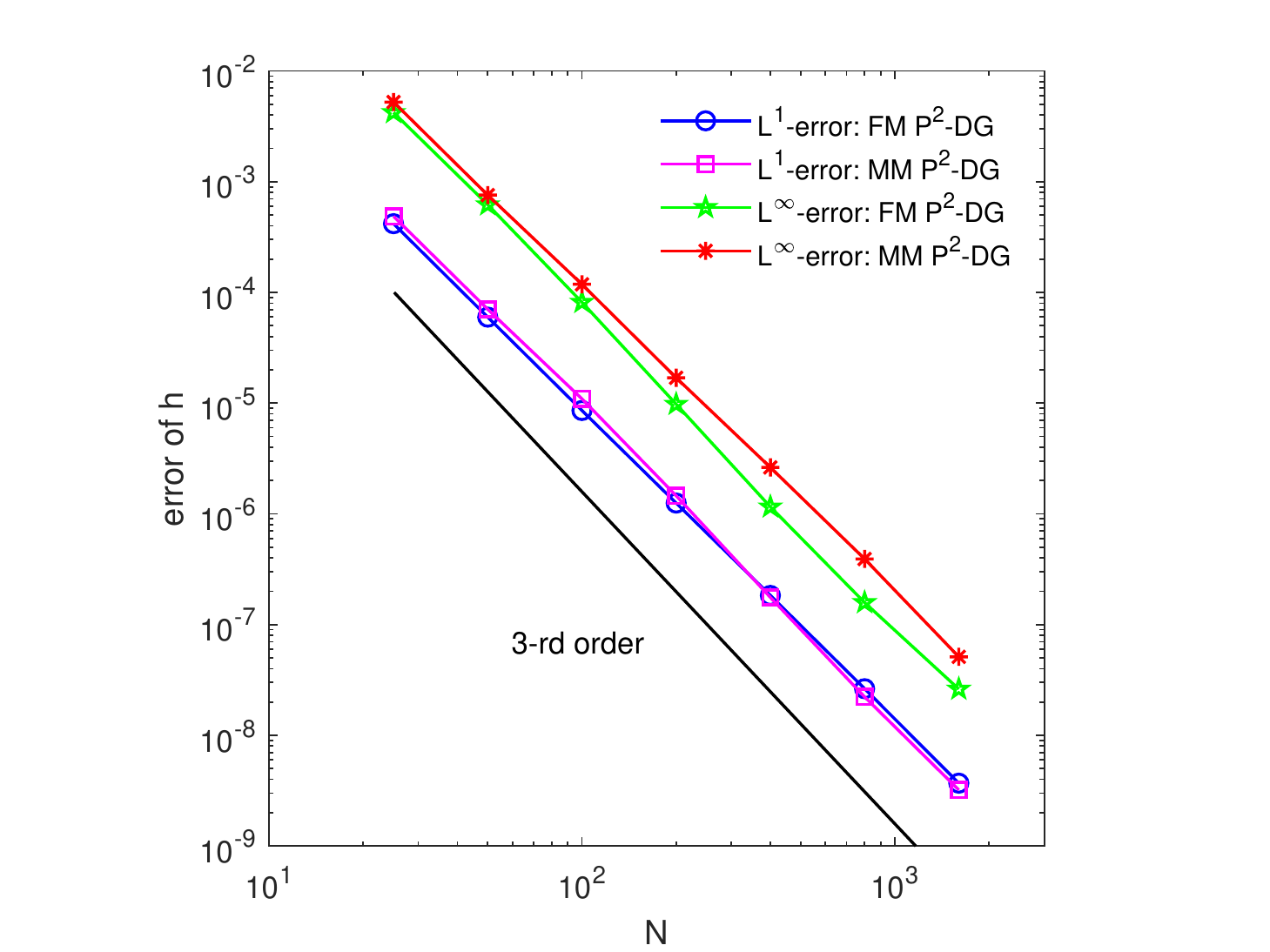}}
\subfigure[errors of $hu$]{
\includegraphics[width=0.31\textwidth,trim=50 0 65 10,clip]
{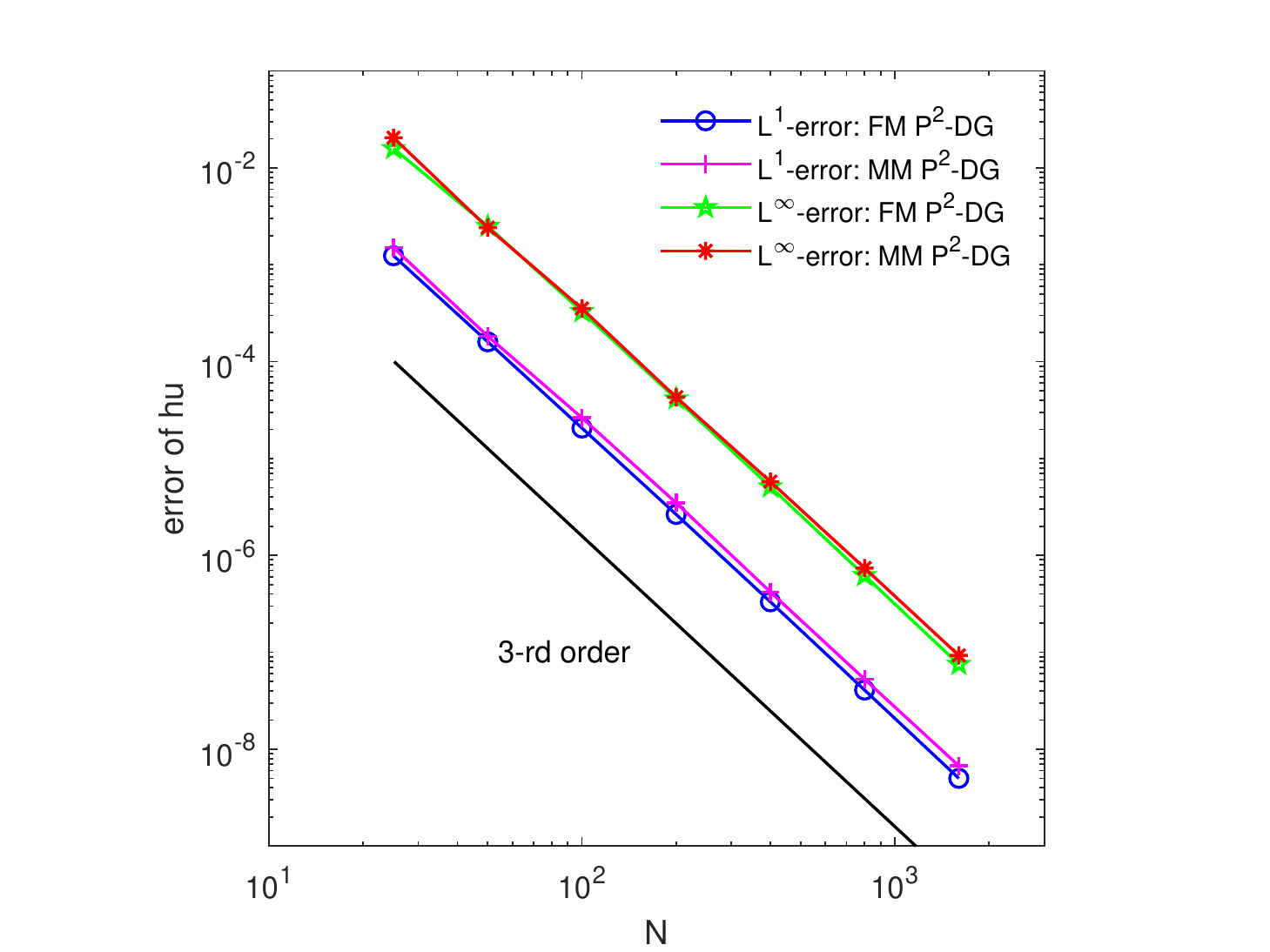}}
\subfigure[errors of $h\theta$]{
\includegraphics[width=0.31\textwidth,trim=50 0 65 10,clip]{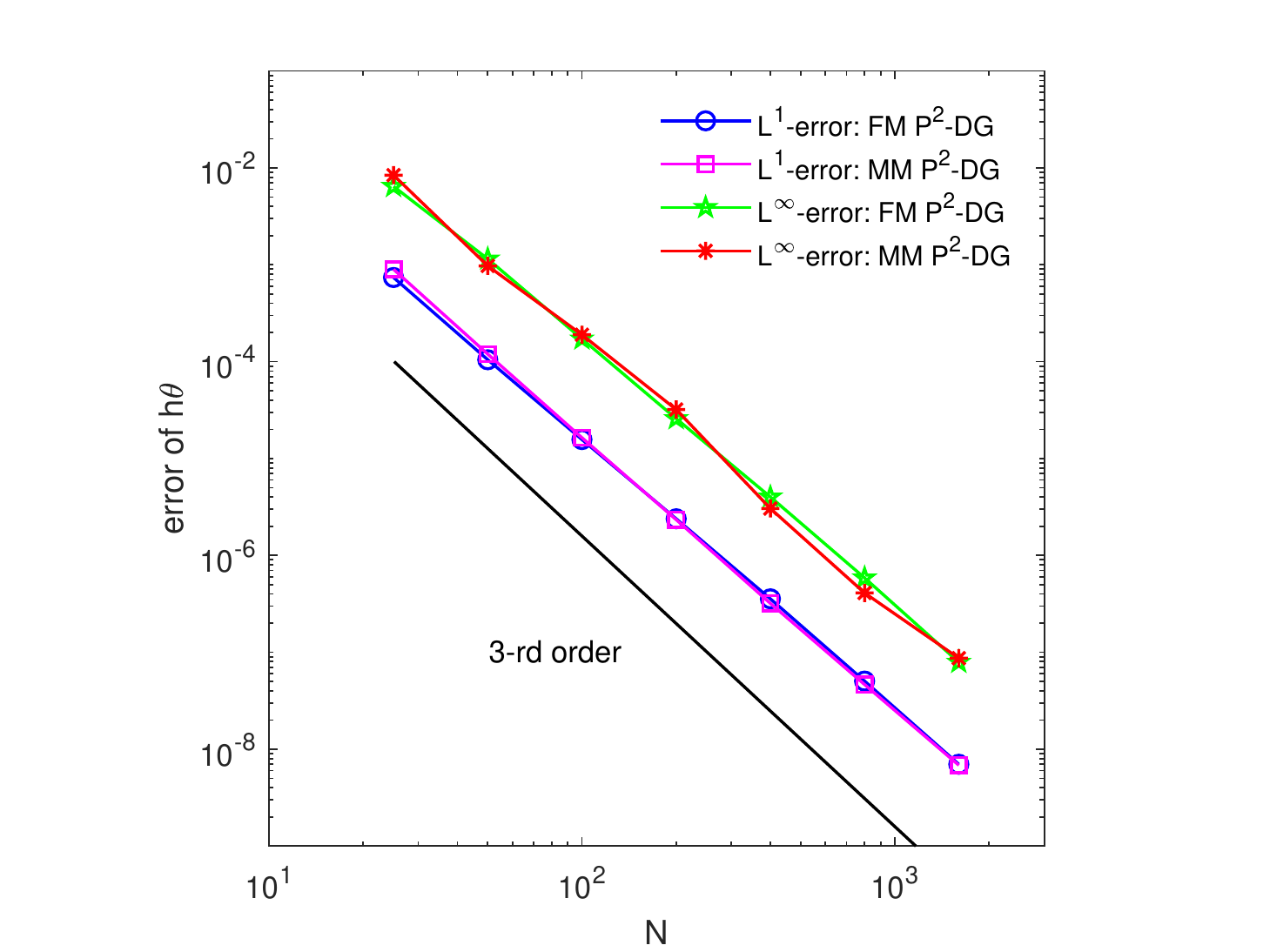}}
\caption{Example \ref{test2-1d}. The $L^1$ and $L^\infty$ norm of the error with moving and fixed meshes for variables $h$, $hu$ and $h\theta$.}
\label{Fig:test2-1d}
\end{figure}

\begin{example}\label{test3-1d}
(The perturbed lake-at-rest steady-state flow test for the 1D Ripa model.)
\end{example}
We choose this example to verify the ability of our well-balanced MM-DG scheme to capture small perturbations over the lake-at-rest water surface and temperature field.
Similar examples have been used by a number of researchers, e.g., \cite{Britton-Xing-2020JSC,Touma-Klingenberg-2015}.
The bottom topography in this example is taken as
\begin{equation}\label{test3-1d-b}
b(x) = \begin{cases}
0.85(\cos(10\pi(x+0.9))+1),& \text{for } x \in (-1, -0.8) \\
1.25(\cos(10\pi(x-0.4))+1),& \text{for } x \in (0.3, 0.5) \\
0,& \text{otherwise}
\end{cases}
\end{equation}
which has two bumps in the computational domain $(-4,2)$.
In this test, we initially add a small perturbation of magnitude $\varepsilon=0.01$ to the free water surface of the still-water steady state. The initial conditions are given by
\begin{equation}\label{Data-1}
\big(h,u,\theta\big)(x,0)=
\begin{cases}
\big(6-b(x)+\varepsilon,~0,~4\big),& \text{for}~x \in (-1.5,-1.4)\\
\big(6-b(x),~~~~~~0,~4\big),& \text{otherwise.}
\end{cases}
\end{equation}
The initial conditions are constituted by the two Riemann problems at $x=-1.5$ and $x=-1.4$, respectively. We compute the solution up to $t=0.4$ when the right wave has already passed the two bottom bumps.

In this case, the temperature $\theta$ stays a constant. As time being, the Riemann problems at $x=-1.5$ and $x=-1.4$ each produce a left-shock wave and a right-shock wave.
At around $t=0.0098$, the right-shock wave of the left Riemann problem at $x=-1.5$ collides with the left-shock wave of the right Riemann problem at $x=-1.4$, which produces a new Riemann problem at $x=-1.45$. Then, the new Riemann problem produces two shock waves that propagate left and right, respectively.

The free water surface evolution along the time obtained with the $P^2$ MM-DG method of $N=300$ is plotted in Fig.~\ref{Fig:test3-1d-Htf-mesh-s2}(a).
The right-propagating waves interacts with the first bottom bump (at around $t = 0.082$)
and generates a complex wave structure in the bump region. Thus, it is beneficial to concentrate mesh points around the bump.
The mesh trajectories ($N=300$) obtained with the $P^2$ MM-DG method are plotted in plotted
in Fig.~\ref{Fig:test3-1d-Htf-mesh-s2}(b).
The mesh has higher concentrations around the shock waves and bottom bumps. It is clear that the mesh adaptation
captures the shock waves before and after the split and interaction of the shock waves with the bottom bumps.

The solutions $h+b$, $hu$, and $h\theta$ at $t = 0.4$ obtained with $P^2$-DG and a moving mesh of $N=300$
and fixed meshes of $N=300$ and $N=900$ are plotted in Figs.~\ref{Fig:test3-1d-s2-H},~\ref{Fig:test3-1d-s2-hu}, and~\ref{Fig:test3-1d-s2-eta}, respectively.
The results show that the DG method with moving or fixed meshes
is able to capture waves of small perturbation.
Moreover, the moving mesh solutions with $N=300$ are more accurate than those with
fixed meshes of $N=300$ and $N=900$ and contain no visible spurious numerical oscillations.

\begin{figure}[H]
\centering
\subfigure[free water surface]{
\includegraphics[width=0.4\textwidth,trim=0 0 20 10,clip]
{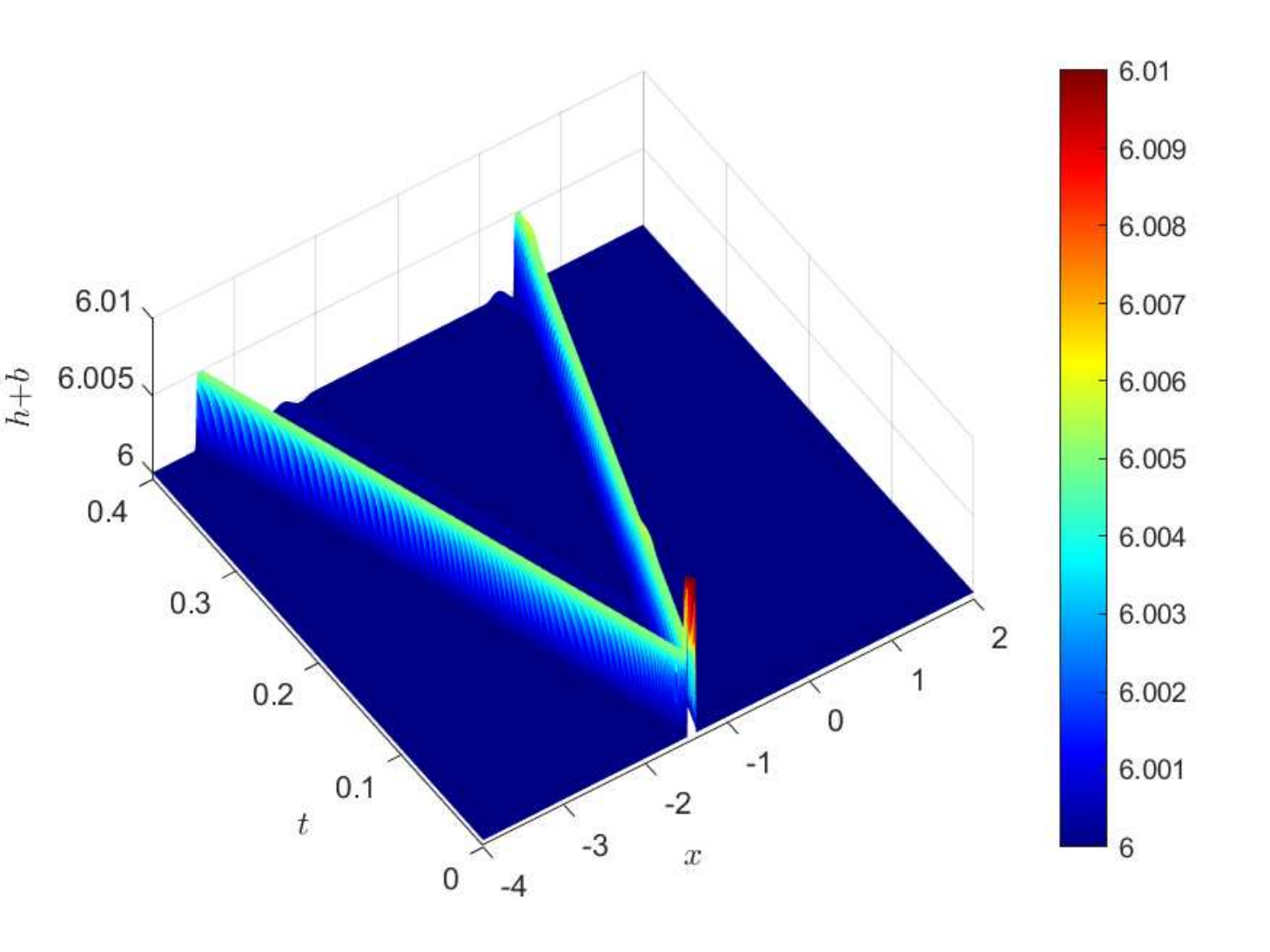}}
\subfigure[mesh trajectories]{
\includegraphics[width=0.4\textwidth,trim=0 0 20 10,clip]{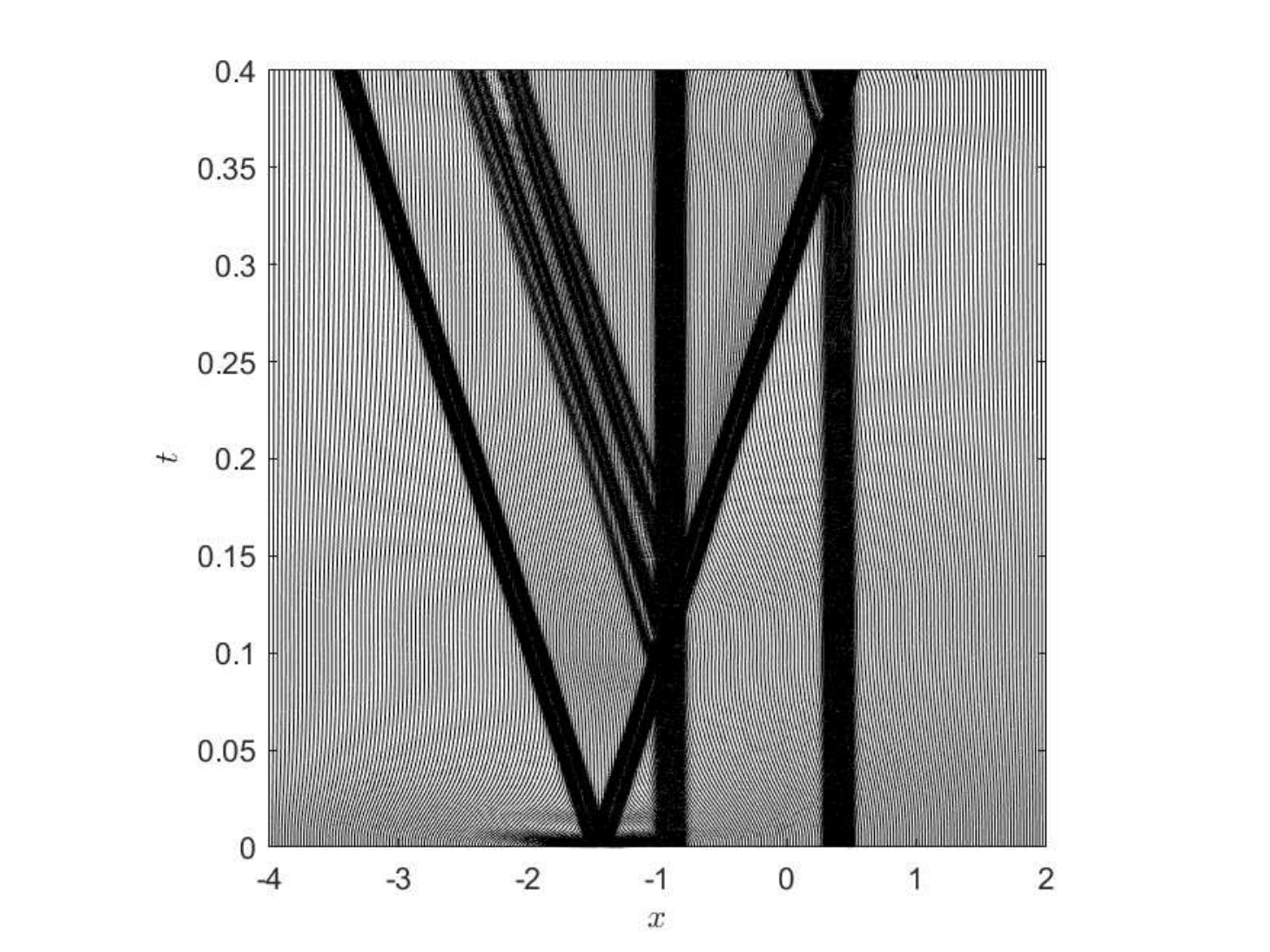}}
\caption{Example \ref{test3-1d} with initial data \eqref{Data-1}. The free water surface evolution along the time and mesh trajectories are obtained with $P^2$-DG of a moving mesh of $N=300$.}
\label{Fig:test3-1d-Htf-mesh-s2}
\end{figure}

\begin{figure}[H]
\centering
\subfigure[$h+b$: MM 300 vs FM 300]{
\includegraphics[width=0.4\textwidth,trim=10 0 40 10,clip]
{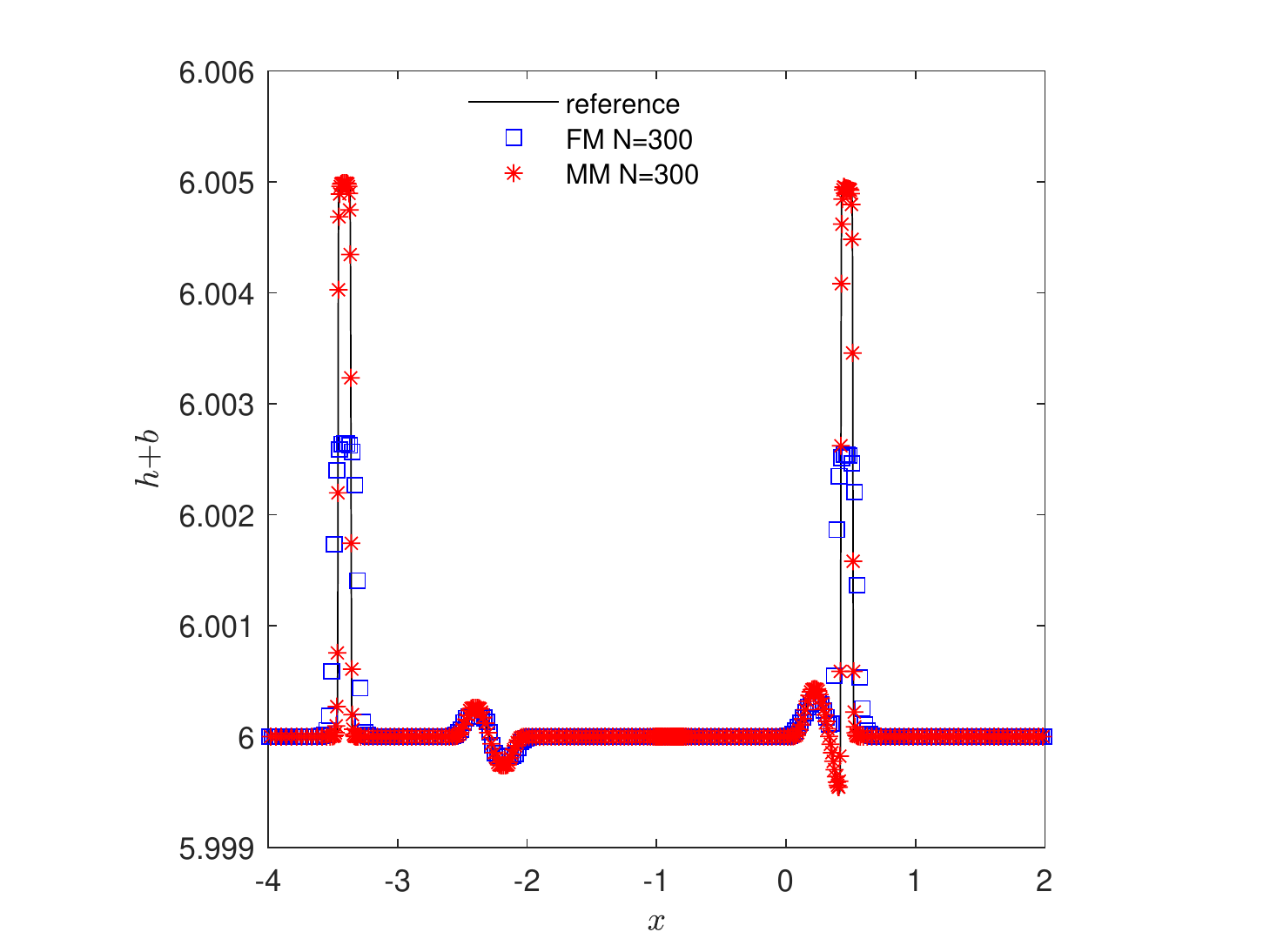}}
\subfigure[close view of (a)]{
\includegraphics[width=0.4\textwidth,trim=10 0 40 10,clip]
{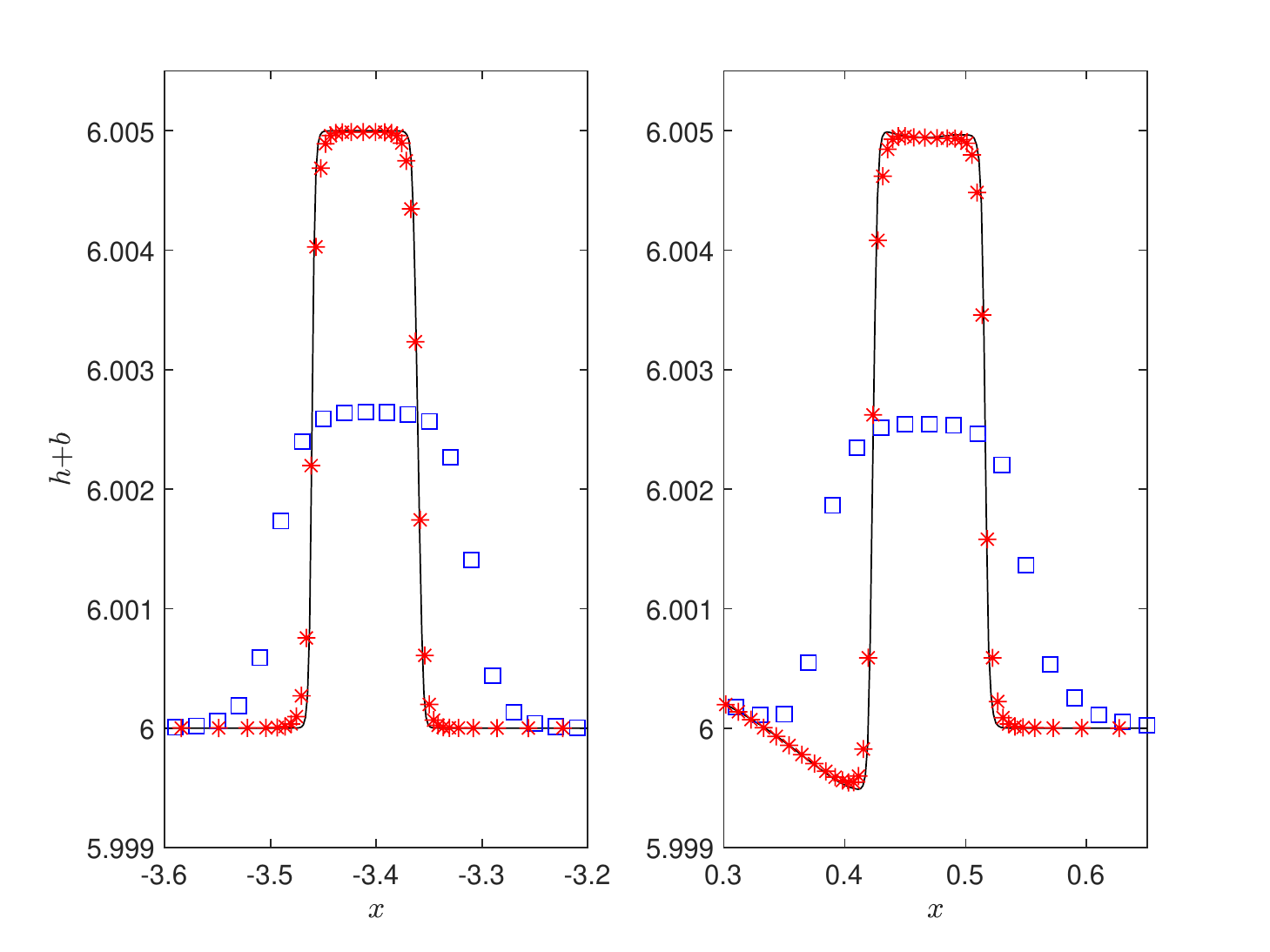}}
\subfigure[$h+b$: MM 300 vs FM 900]{
\includegraphics[width=0.4\textwidth,trim=10 0 40 10,clip]
{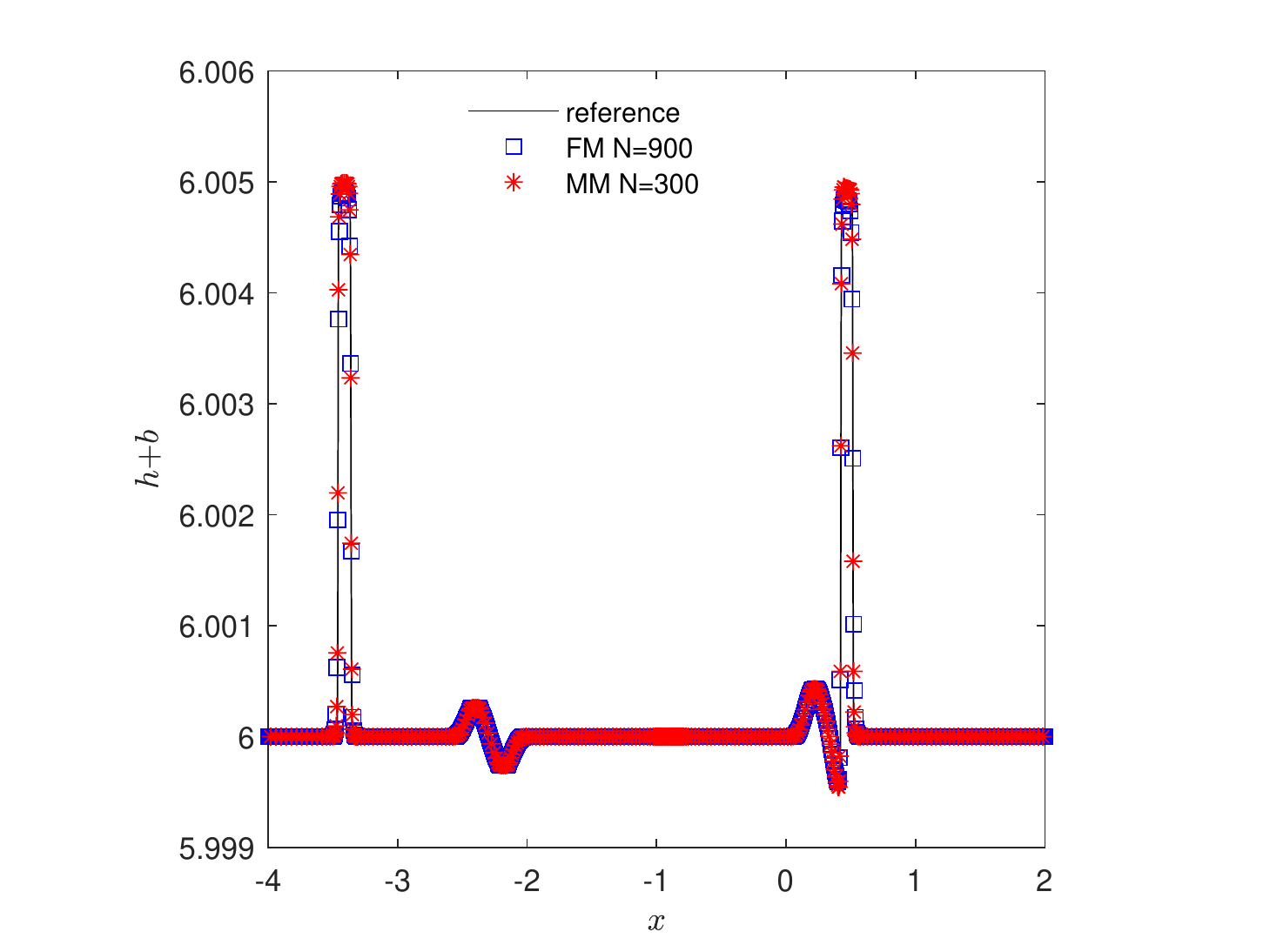}}
\subfigure[close view of (c)]{
\includegraphics[width=0.4\textwidth,trim=10 0 40 10,clip]
{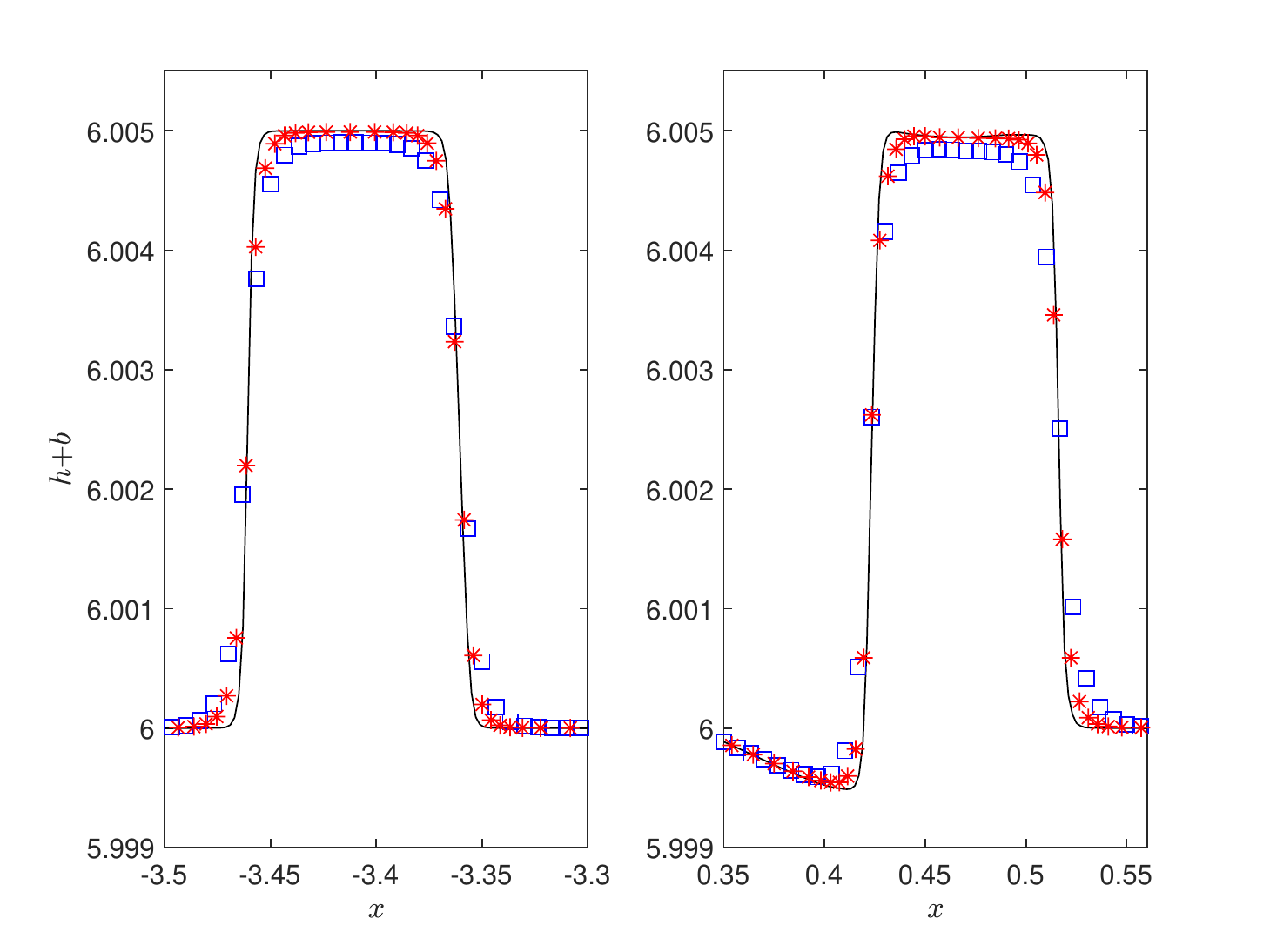}}
\caption{Example \ref{test3-1d} with initial data \eqref{Data-1}. The solutions $h+b$ at $t = 0.4$ obtained with $P^2$-DG and a moving mesh of $N=300$ and fixed meshes of $N=300$ and $N=900$.}
\label{Fig:test3-1d-s2-H}
\end{figure}

\begin{figure}[H]
\centering
\subfigure[$hu$: MM 300 vs FM 300]{
\includegraphics[width=0.4\textwidth,trim=10 0 40 10,clip]
{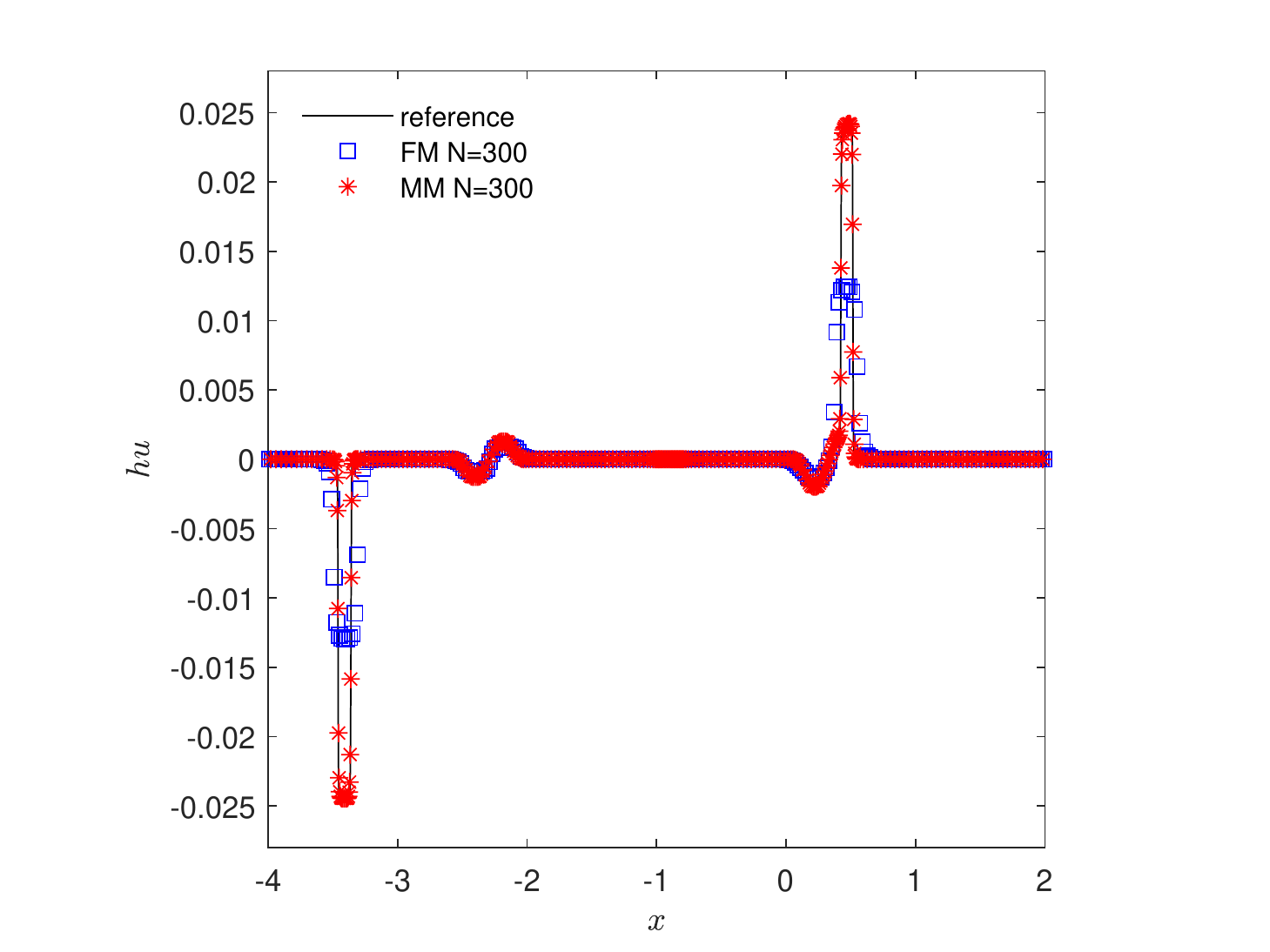}}
\subfigure[close view of (a)]{
\includegraphics[width=0.4\textwidth,trim=10 0 40 10,clip]
{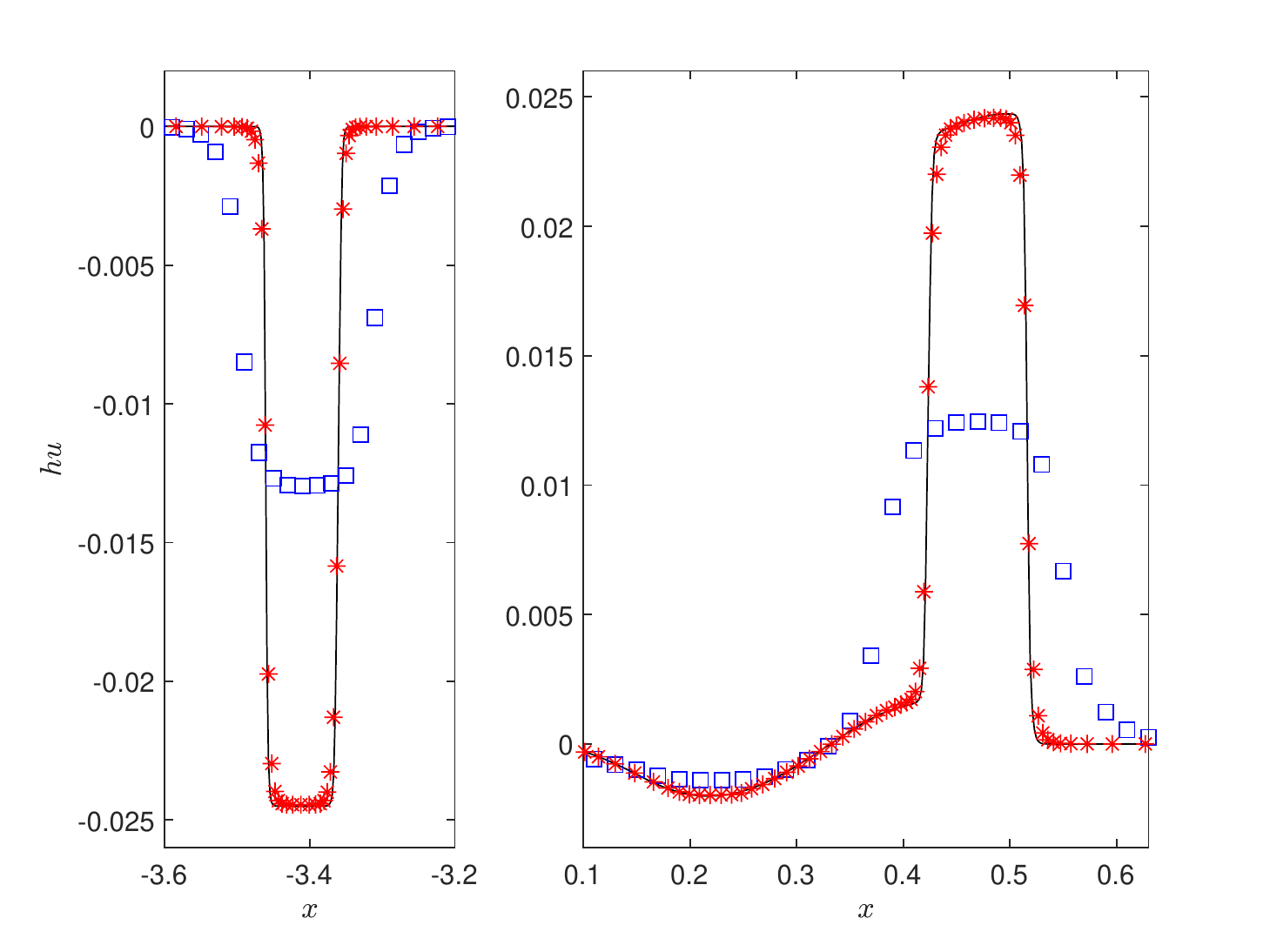}}
\subfigure[$hu$: MM 300 vs FM 900]{
\includegraphics[width=0.4\textwidth,trim=10 0 40 10,clip]
{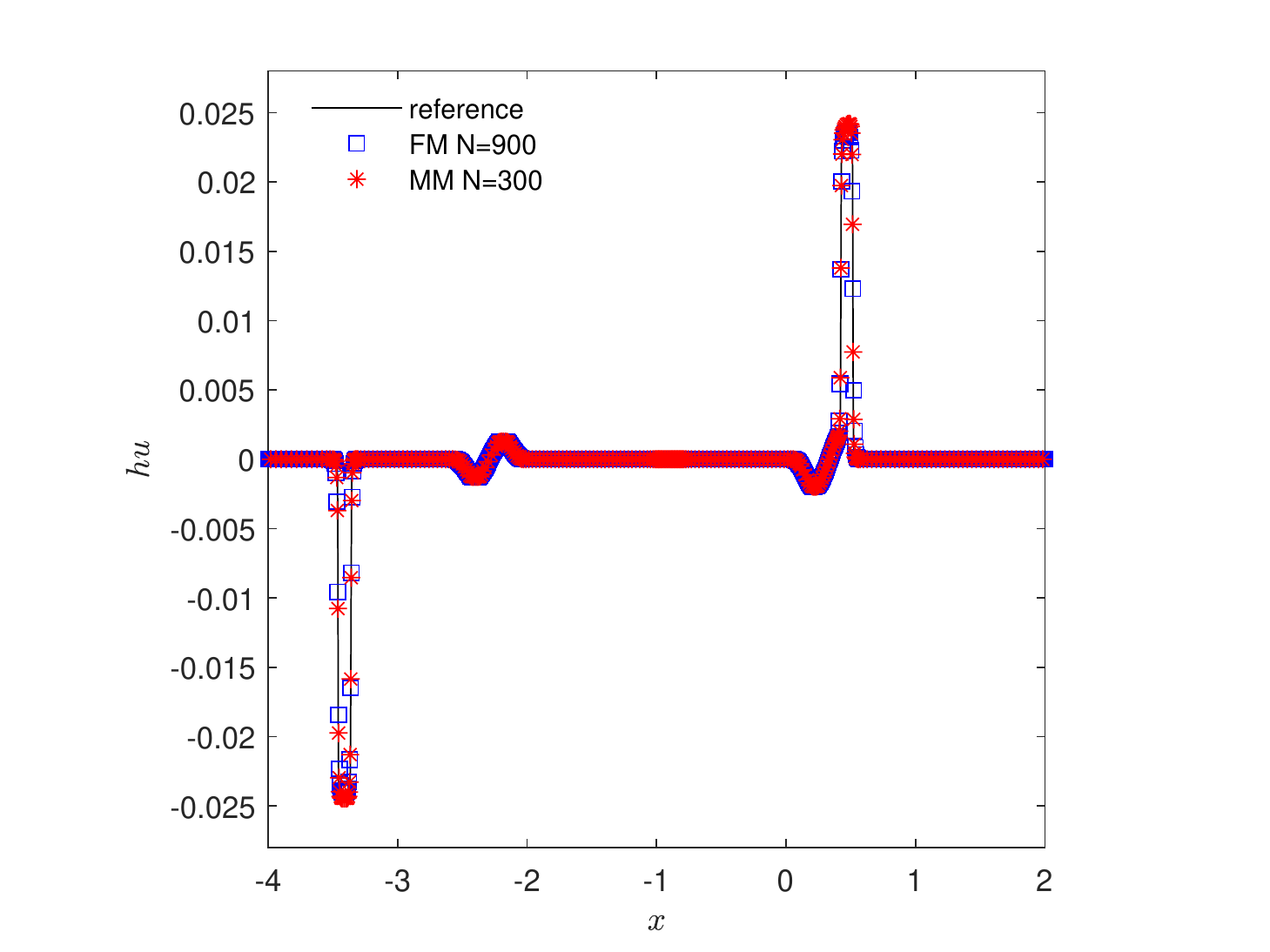}}
\subfigure[close view of (c)]{
\includegraphics[width=0.4\textwidth,trim=10 0 40 10,clip]
{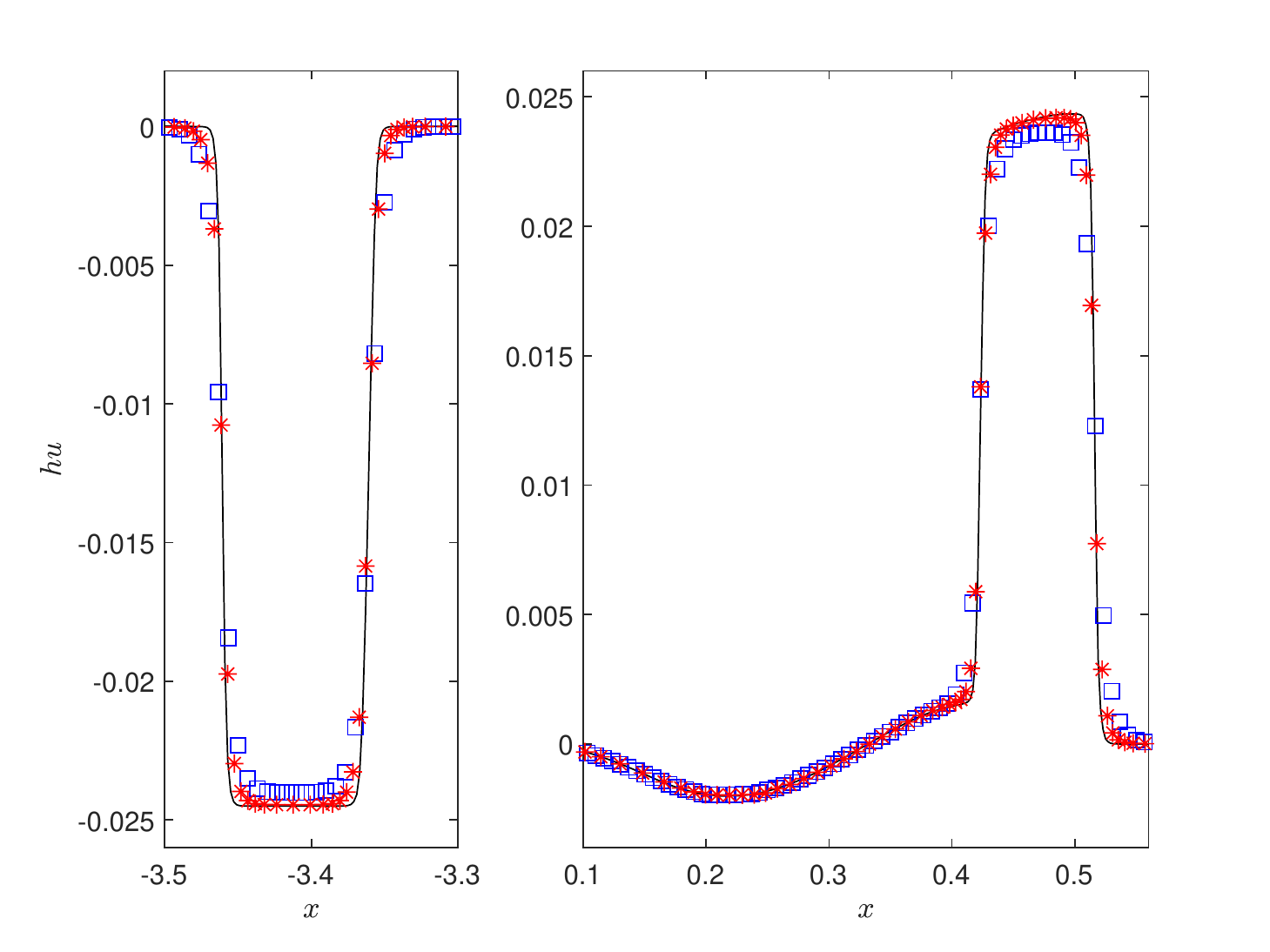}}
\caption{Example \ref{test3-1d} with initial data \eqref{Data-1}. The solutions $hu$ at $t = 0.4$ obtained with $P^2$-DG and a moving mesh of $N=300$ and fixed meshes of $N=300$ and $N=900$.}
\label{Fig:test3-1d-s2-hu}
\end{figure}

\begin{figure}[H]
\centering
\subfigure[$h\theta$: MM 300 vs FM 300]{
\includegraphics[width=0.4\textwidth,trim=10 0 40 10,clip]
{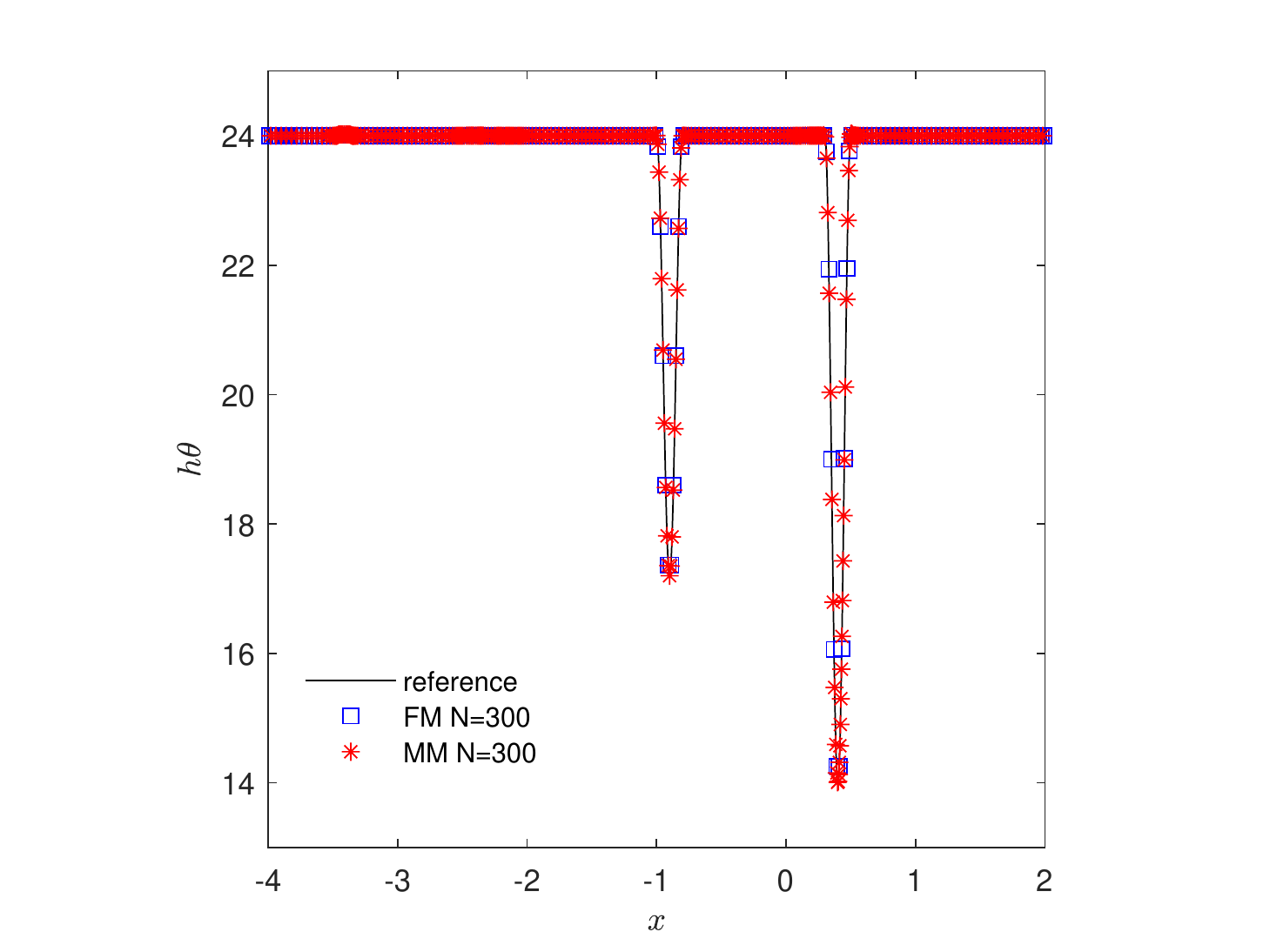}}
\subfigure[close view of (a)]{
\includegraphics[width=0.4\textwidth,trim=10 0 40 10,clip]
{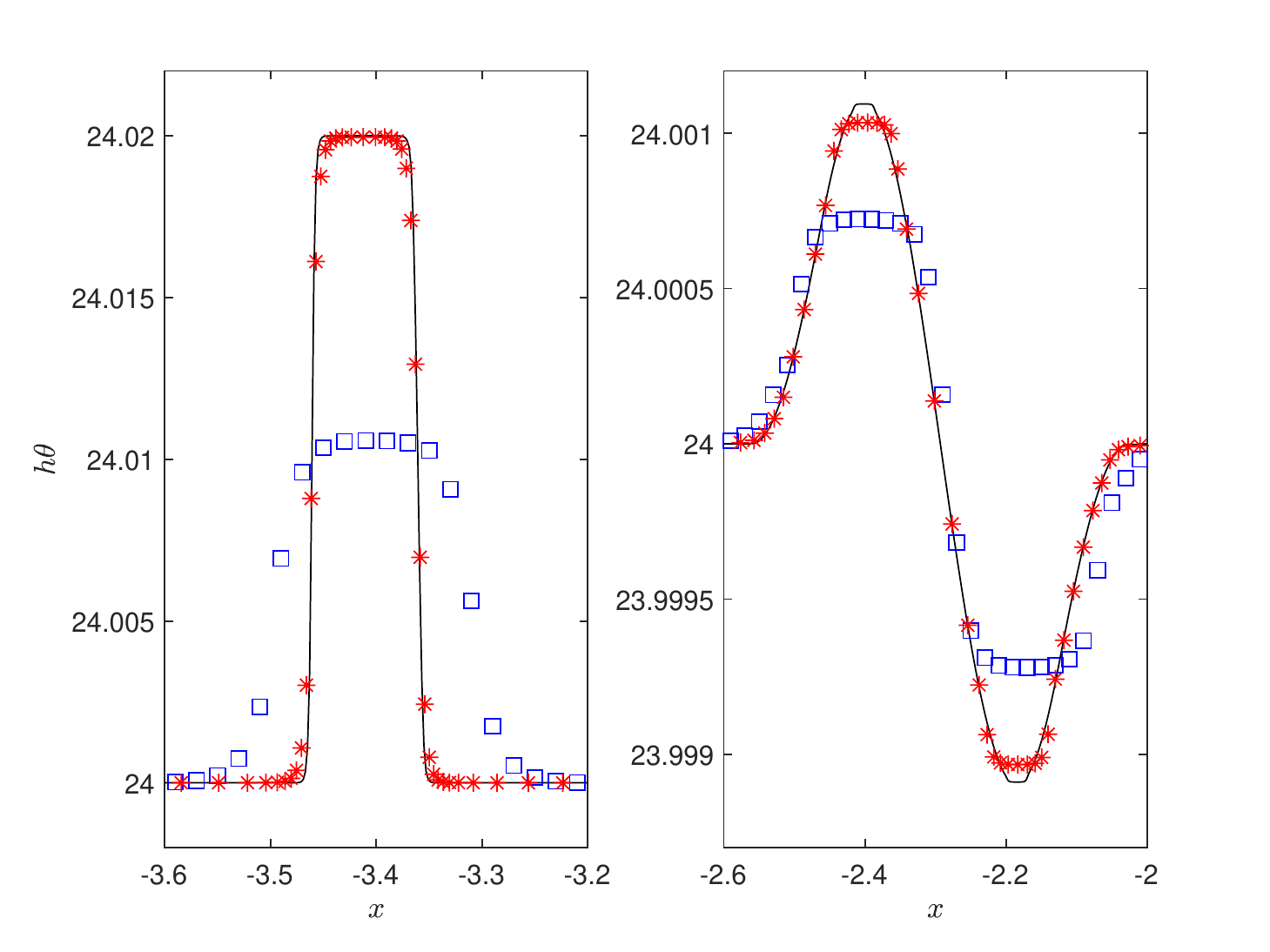}}
\subfigure[$h\theta$: MM 300 vs FM 900]{
\includegraphics[width=0.4\textwidth,trim=10 0 40 10,clip]
{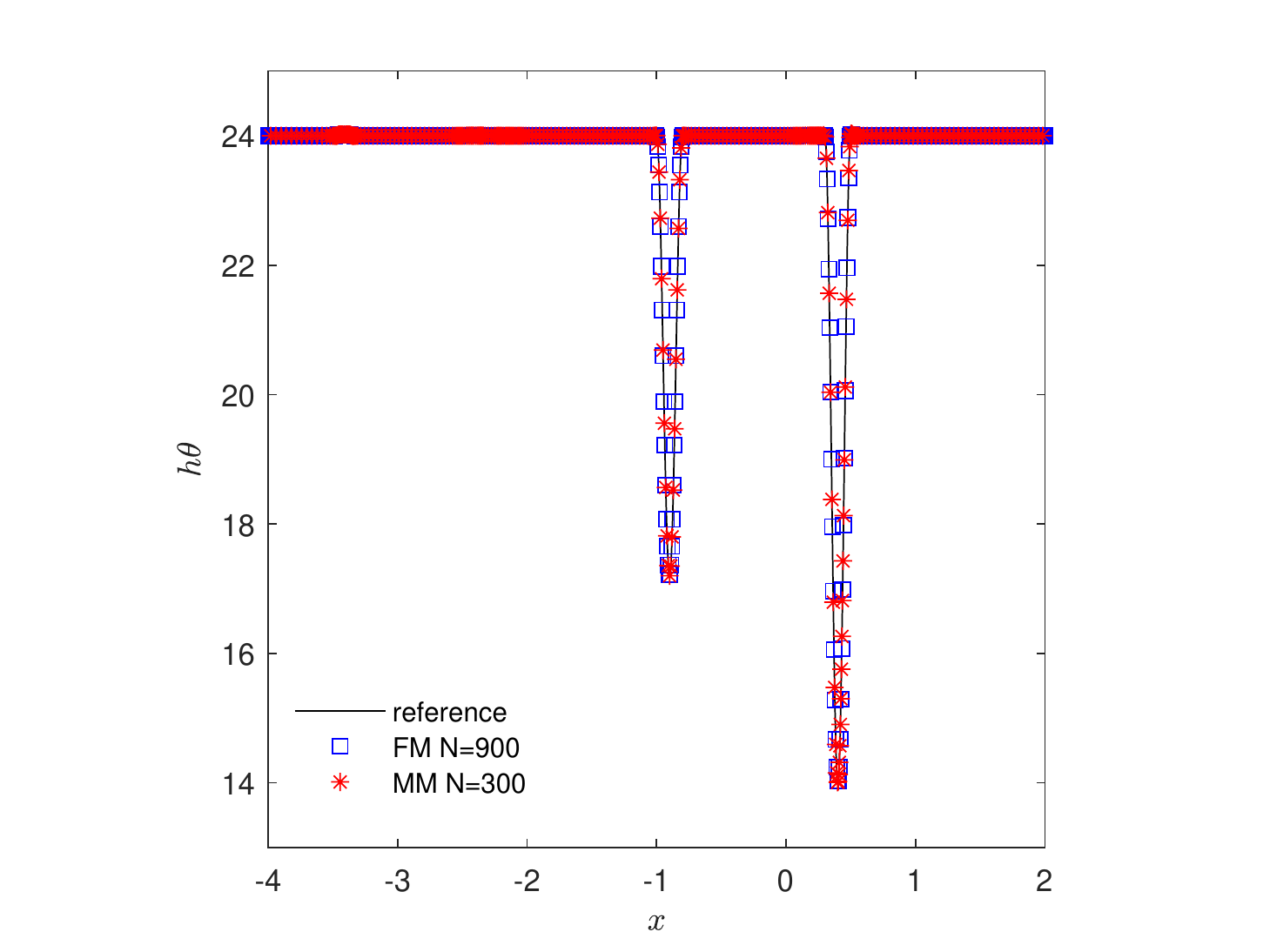}}
\subfigure[close view of (c)]{
\includegraphics[width=0.4\textwidth,trim=10 0 40 10,clip]
{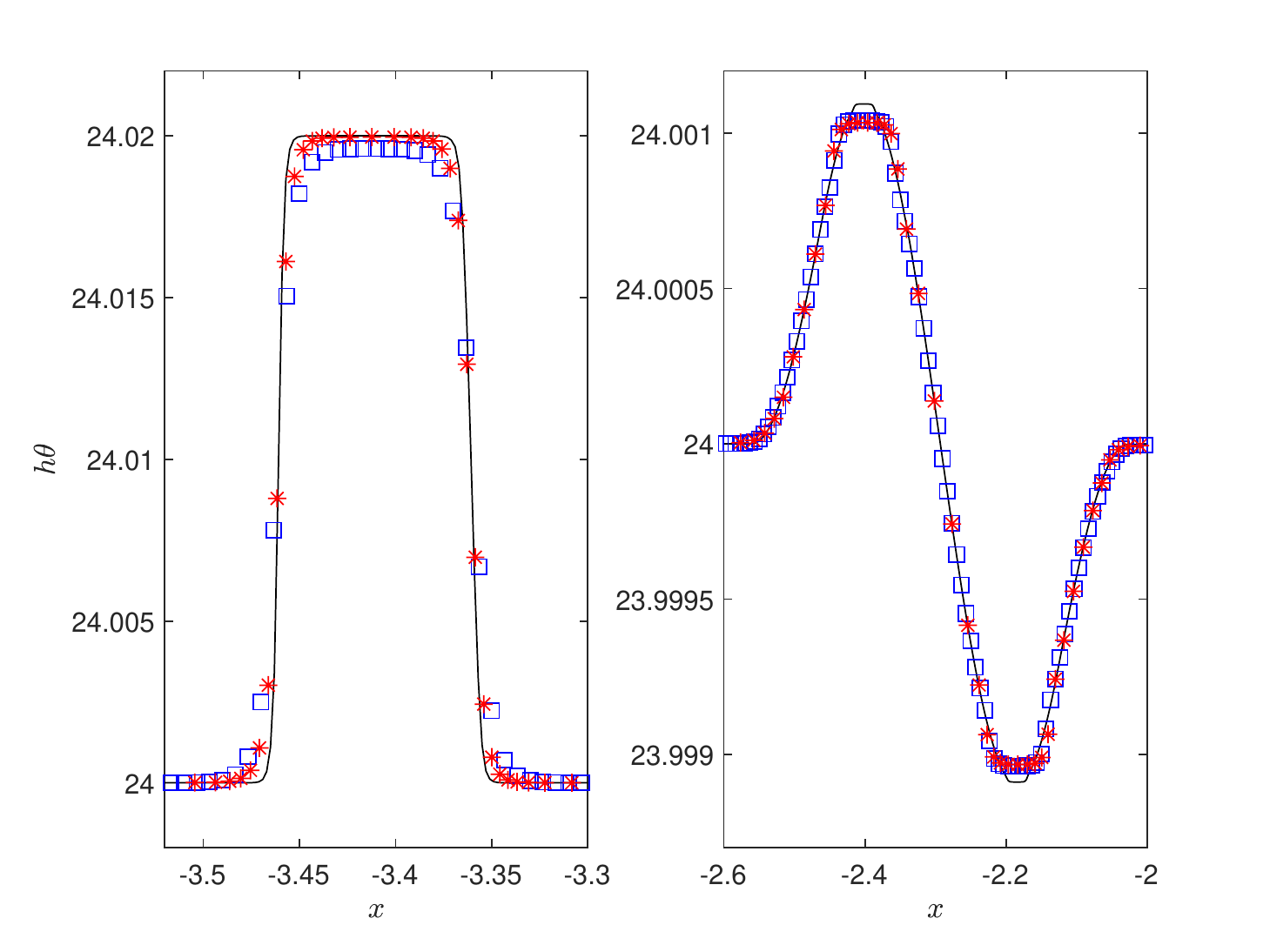}}
\caption{Example \ref{test3-1d} with initial data \eqref{Data-1}. The solutions $h\theta$ at $t = 0.4$ obtained with $P^2$-DG and a moving mesh of $N=300$ and fixed meshes of $N=300$ and $N=900$.}
\label{Fig:test3-1d-s2-eta}
\end{figure}

Next, we consider a situation with perturbations of magnitude $\varepsilon = 0.01$ in both the free water surface and temperature. The initial conditions read as
\begin{equation}\label{Data-2}
\big(h,u,\theta\big)(x,0)=
\begin{cases}
\big(6-b(x)+\varepsilon,~0,~\frac{24}{6+\varepsilon}\big),& \text{for}~x \in (-1.5,-1.4)\\
\big(6-b(x),~~~~~~0,~~4~~\big),&\text{otherwise}
\end{cases}
\end{equation}
which are constituted by the two Riemann problems at $x=-1.5$ and $x=-1.4$, respectively. We compute the solution up to $t=0.4$ when the right wave has already passed the two bottom bumps.

For the current situation, the interactions between waves become more complicated due to the fact that each Riemann problem can
produce left and right shock waves and a center contact discontinuity wave.
For example, at around $t=0.0098$, the right-shock wave of the left Riemann problem at $x=-1.5$ collides with the left-shock wave of the right Riemann problem at $x=-1.4$ and produces a new Riemann problem at $x=-1.45$, which produces two shock waves that propagate left and right, respectively. At around $t=0.02$, these shock waves collide with the contact discontinuities of the Riemann problems at $x=-1.5$ and $x=-1.4$, respectively. Then they produce two new Riemann problem at $x=-1.5$ and $x=-1.4$, respectively.

The free water surface evolution along the time is plotted in Fig.~\ref{Fig:test3-1d-Htf-mesh-s1}(a).
It is interesting to see that the shock waves move away from the point of origin and propagate left and right at the characteristic speeds $\pm \sqrt{gh\theta}$,
respectively, while the the contact discontinuities waves (around a half of magnitude $\varepsilon$, i.e., $0.005$) remains unmoved at the center of the perturbed region.

The mesh trajectories of $N=300$ obtained with the $P^2$ MM-DG method are plotted in Fig.~\ref{Fig:test3-1d-Htf-mesh-s1}(b).
It is clear that the mesh is concentrated around the shock waves, the contact discontinuity waves, and bottom bumps as expected.

The solutions $h+b$, $hu$, and $h\theta$ at $t = 0.4$ obtained with $P^2$-DG and a moving mesh of $N=300$
and fixed meshes of $N=300$ and $N=900$ are plotted in Figs.~\ref{Fig:test3-1d-s1-H},~\ref{Fig:test3-1d-s1-hu}, and~\ref{Fig:test3-1d-s1-eta}, respectively.
The results show that the DG method with moving or fixed meshes is able to capture the waves of small perturbation.
Moreover, the moving mesh solutions with $N=300$ are more accurate than those with
fixed meshes of $N=300$ and $N=900$ and contain no visible spurious numerical oscillations.

\begin{figure}[H]
\centering
\subfigure[free water surface]{
\includegraphics[width=0.4\textwidth,trim=0 0 20 10,clip]
{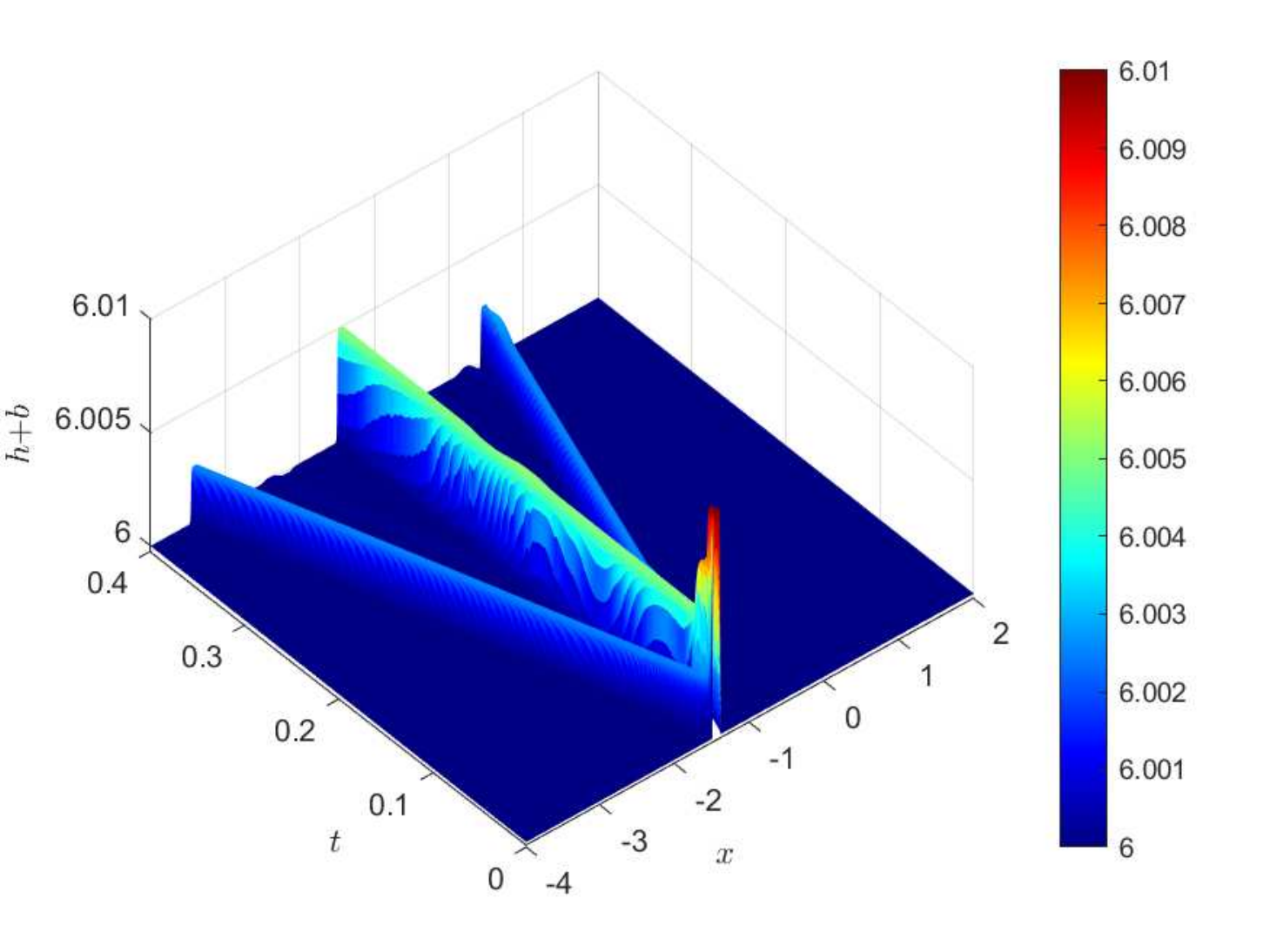}}
\subfigure[mesh trajectories]{
\includegraphics[width=0.4\textwidth,trim=0 0 20 10,clip]{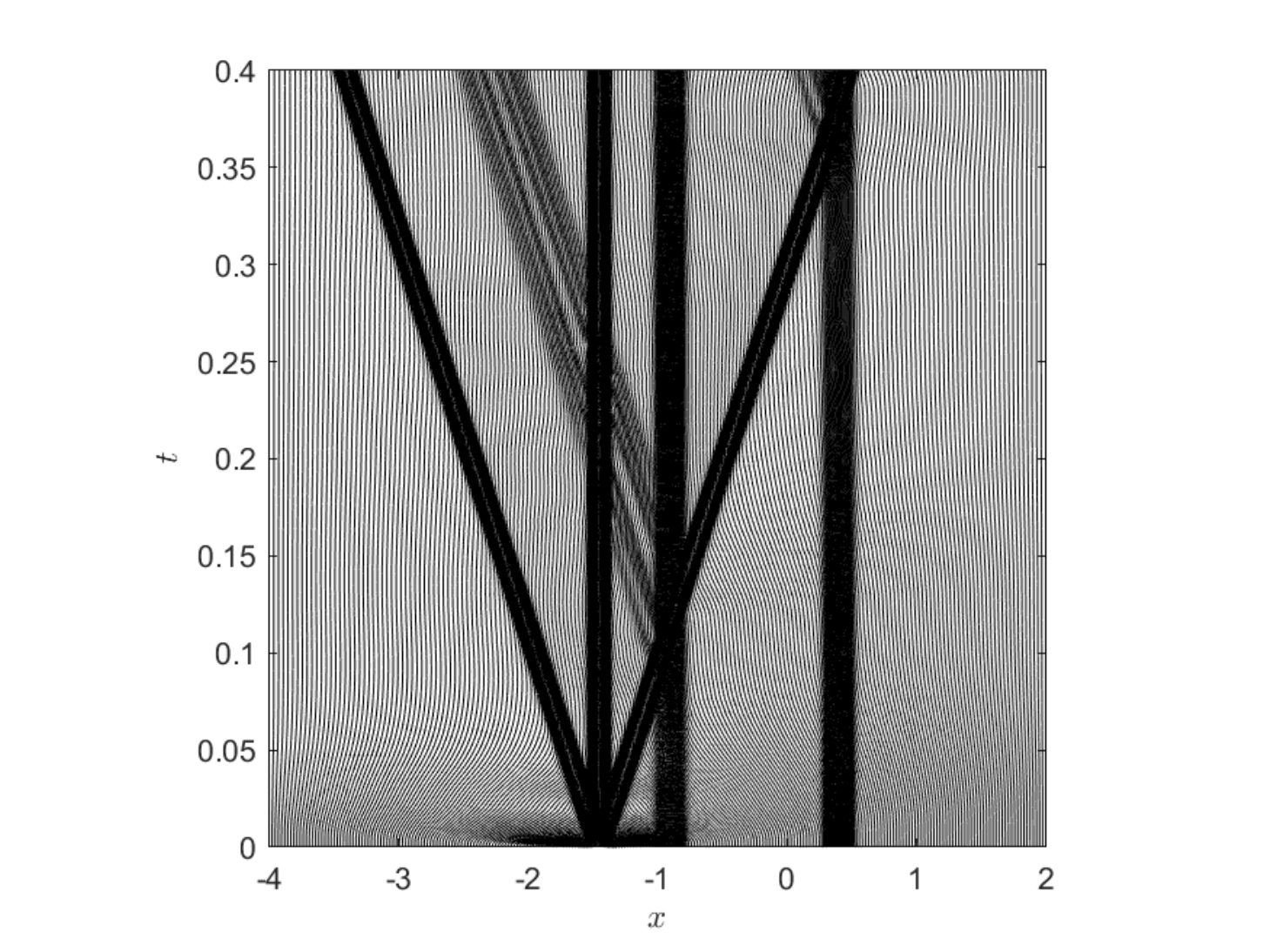}}
\caption{Example \ref{test3-1d} with initial data \eqref{Data-2}. The free water surface evolution along the time and mesh trajectories are obtained with $P^2$-DG of a moving mesh of $N=300$.}
\label{Fig:test3-1d-Htf-mesh-s1}
\end{figure}

\begin{figure}[H]
\centering
\subfigure[$h+b$: MM 300 vs FM 300]{
\includegraphics[width=0.4\textwidth,trim=10 0 40 10,clip]
{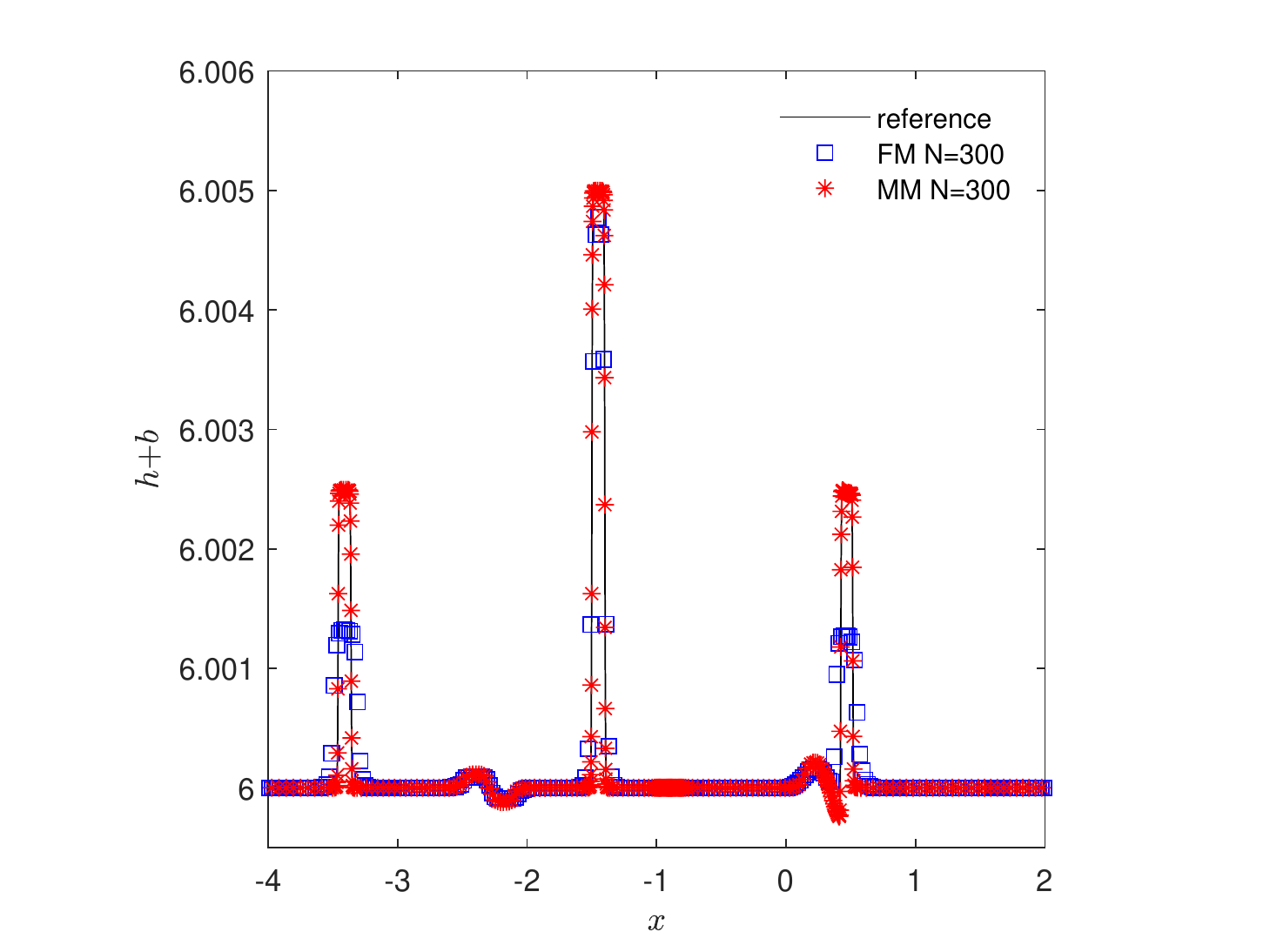}}
\subfigure[close view of (a)]{
\includegraphics[width=0.4\textwidth,trim=10 0 40 10,clip]
{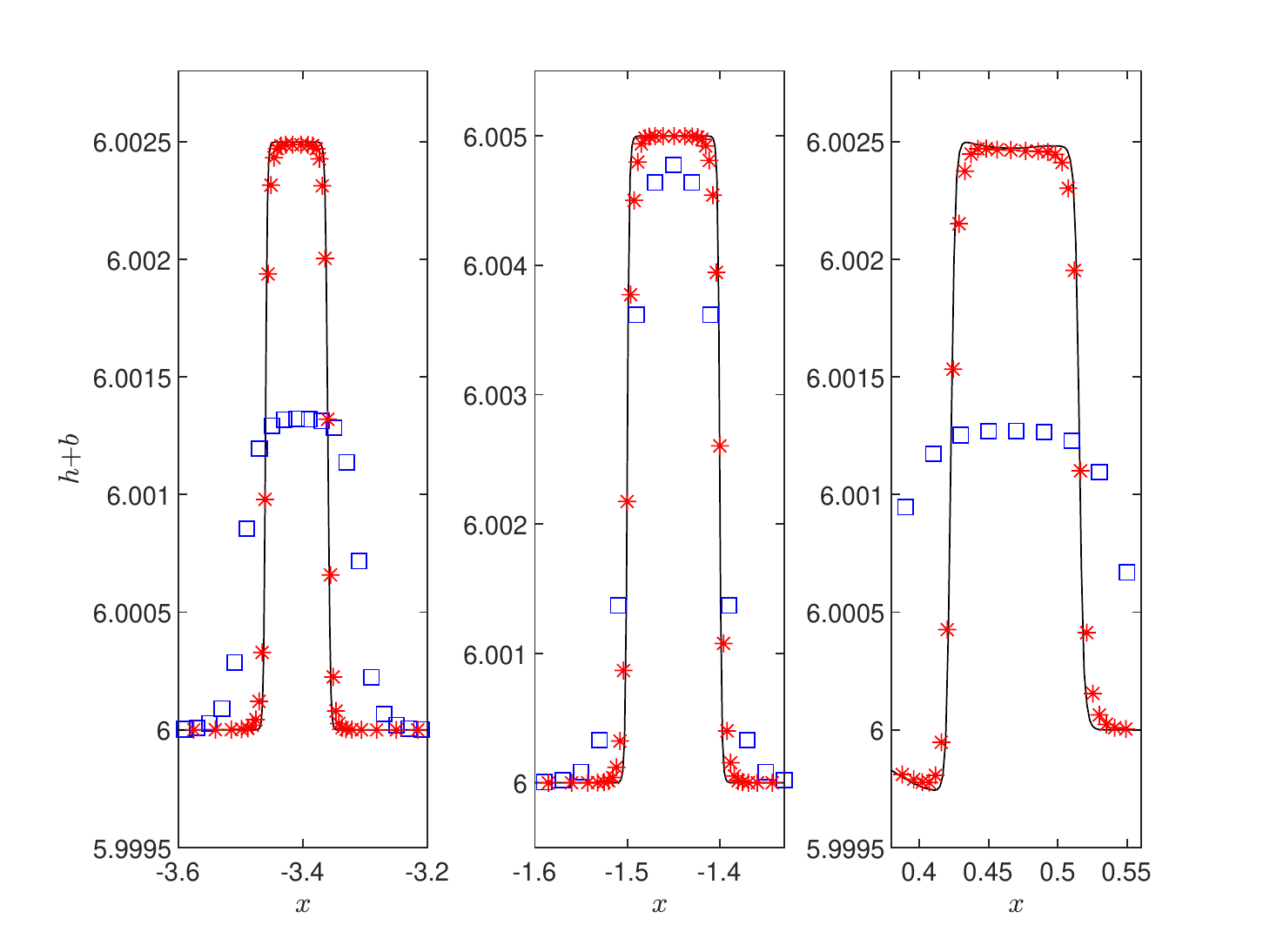}}
\subfigure[$h+b$: MM 300 vs FM 900]{
\includegraphics[width=0.4\textwidth,trim=10 0 40 10,clip]
{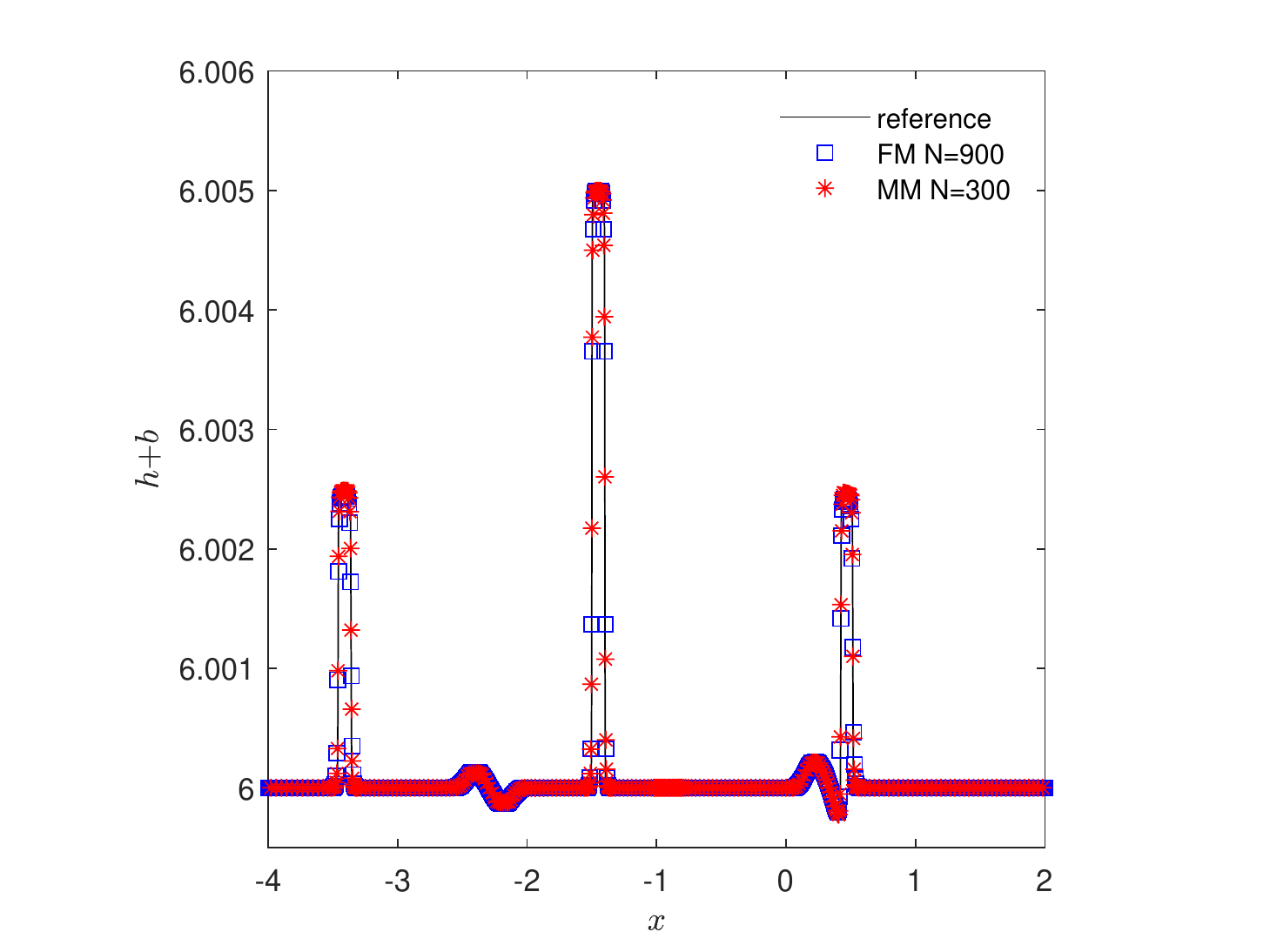}}
\subfigure[close view of (c)]{
\includegraphics[width=0.4\textwidth,trim=10 0 40 10,clip]
{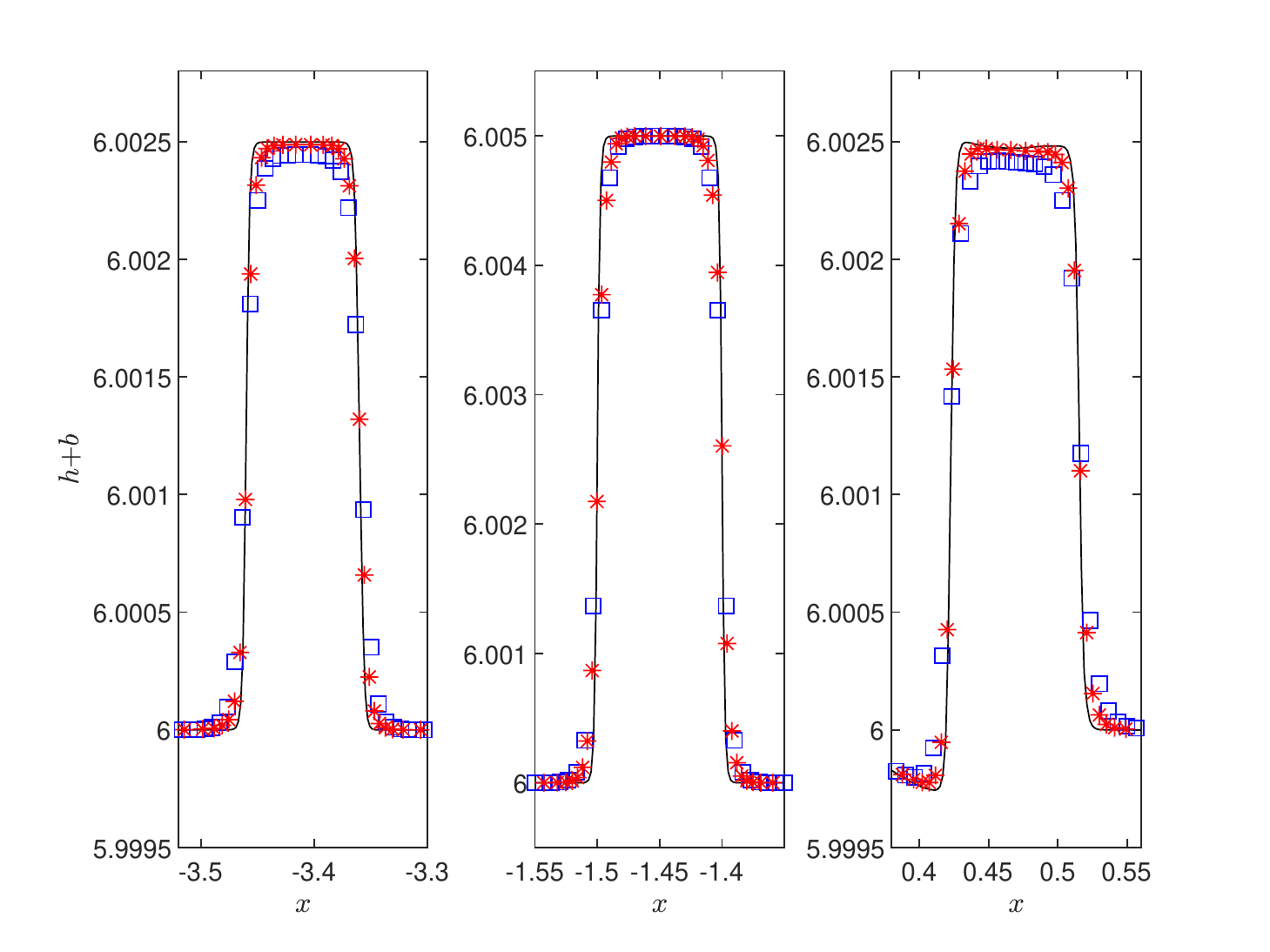}}
\caption{Example \ref{test3-1d} with initial data \eqref{Data-2}. The solution $h+b$ at $t = 0.4$ obtained with $P^2$-DG and a moving mesh of $N=300$ and fixed meshes of $N=300$ and $N=900$.}
\label{Fig:test3-1d-s1-H}
\end{figure}

\begin{figure}[H]
\centering
\subfigure[$hu$: MM 300 vs FM 300]{
\includegraphics[width=0.4\textwidth,trim=10 0 30 10,clip]
{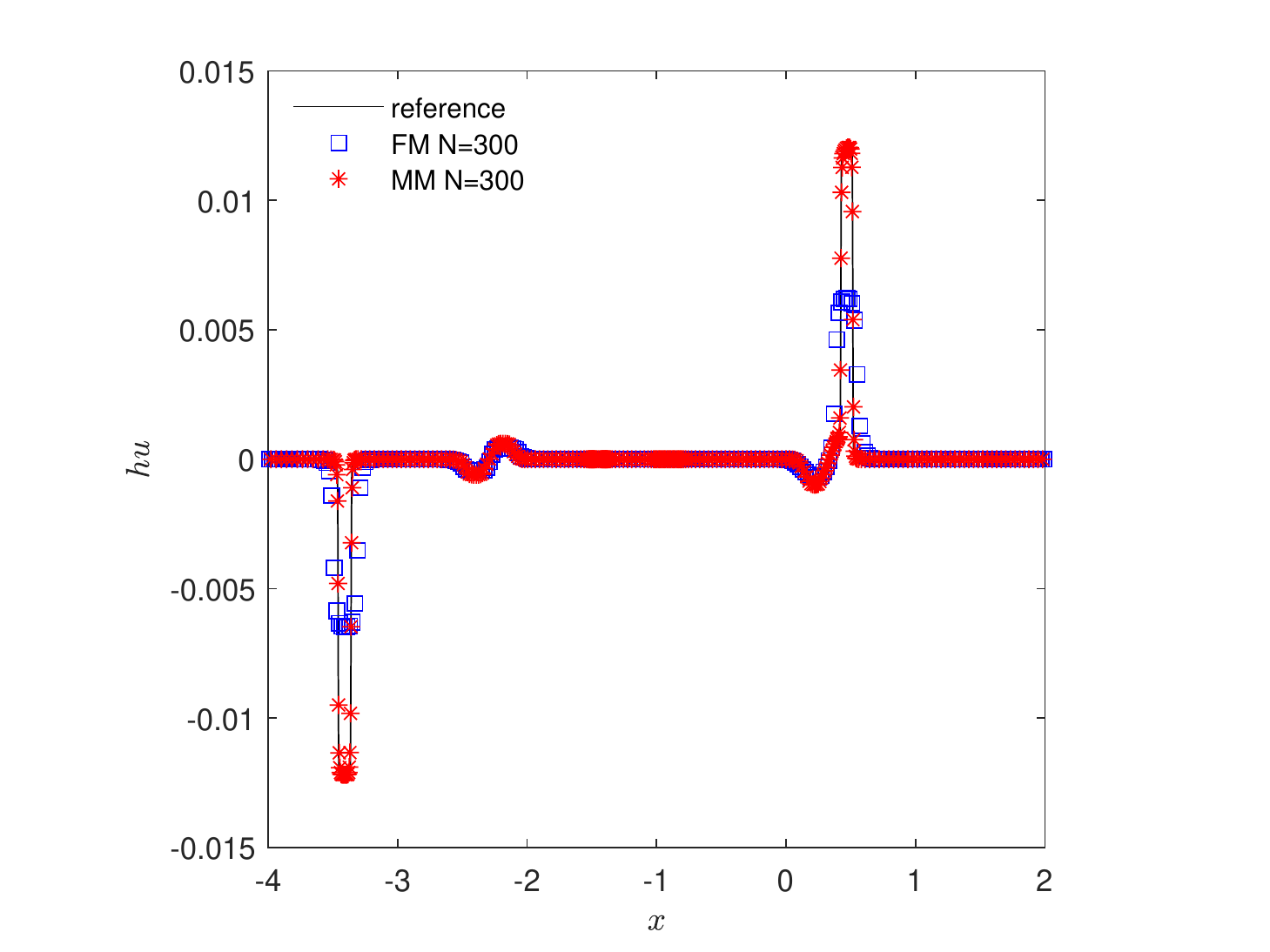}}
\subfigure[close view of (a)]{
\includegraphics[width=0.4\textwidth,trim=10 0 30 10,clip]
{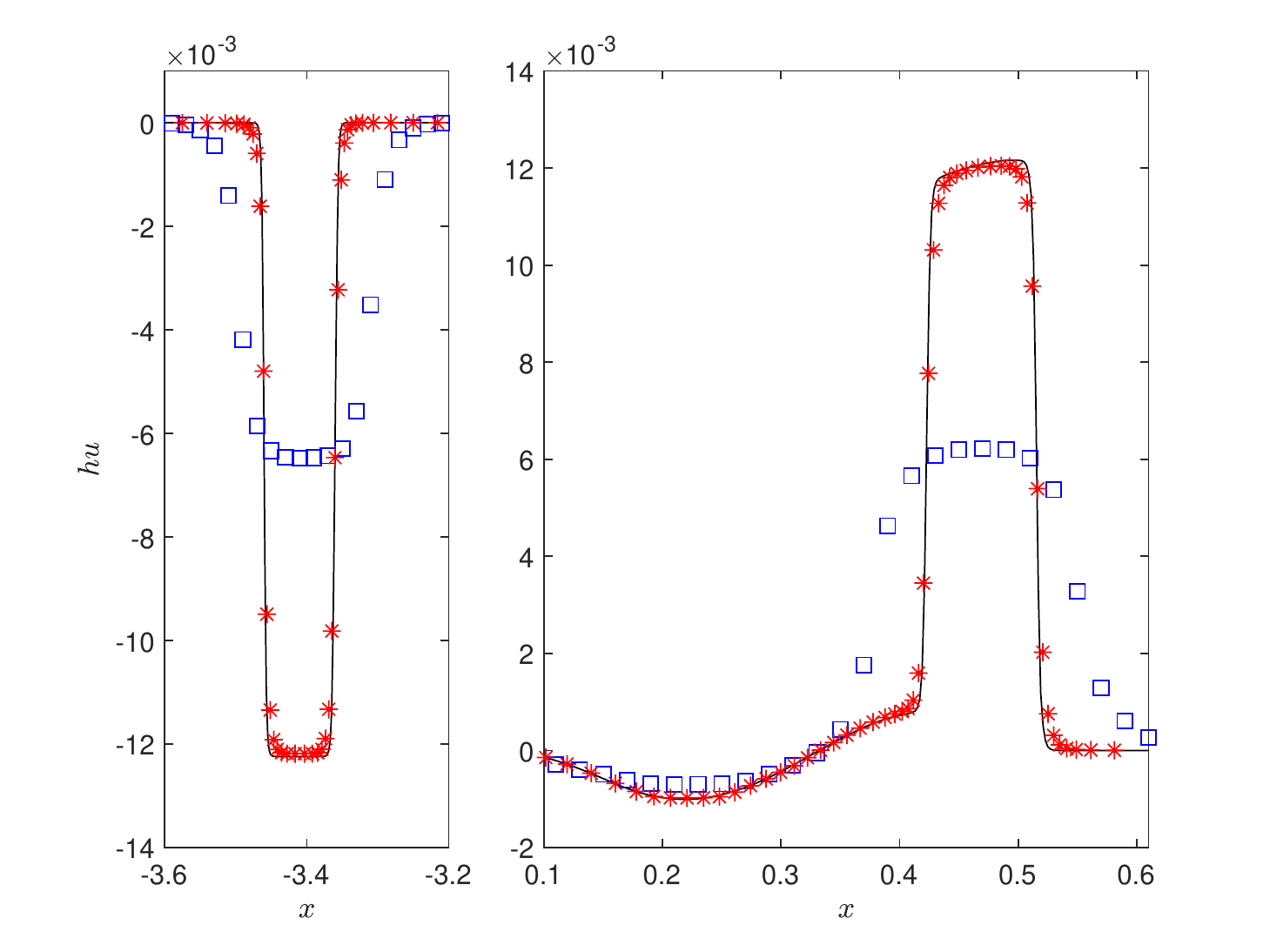}}
\subfigure[$hu$: MM 300 vs FM 900]{
\includegraphics[width=0.4\textwidth,trim=10 0 30 10,clip]
{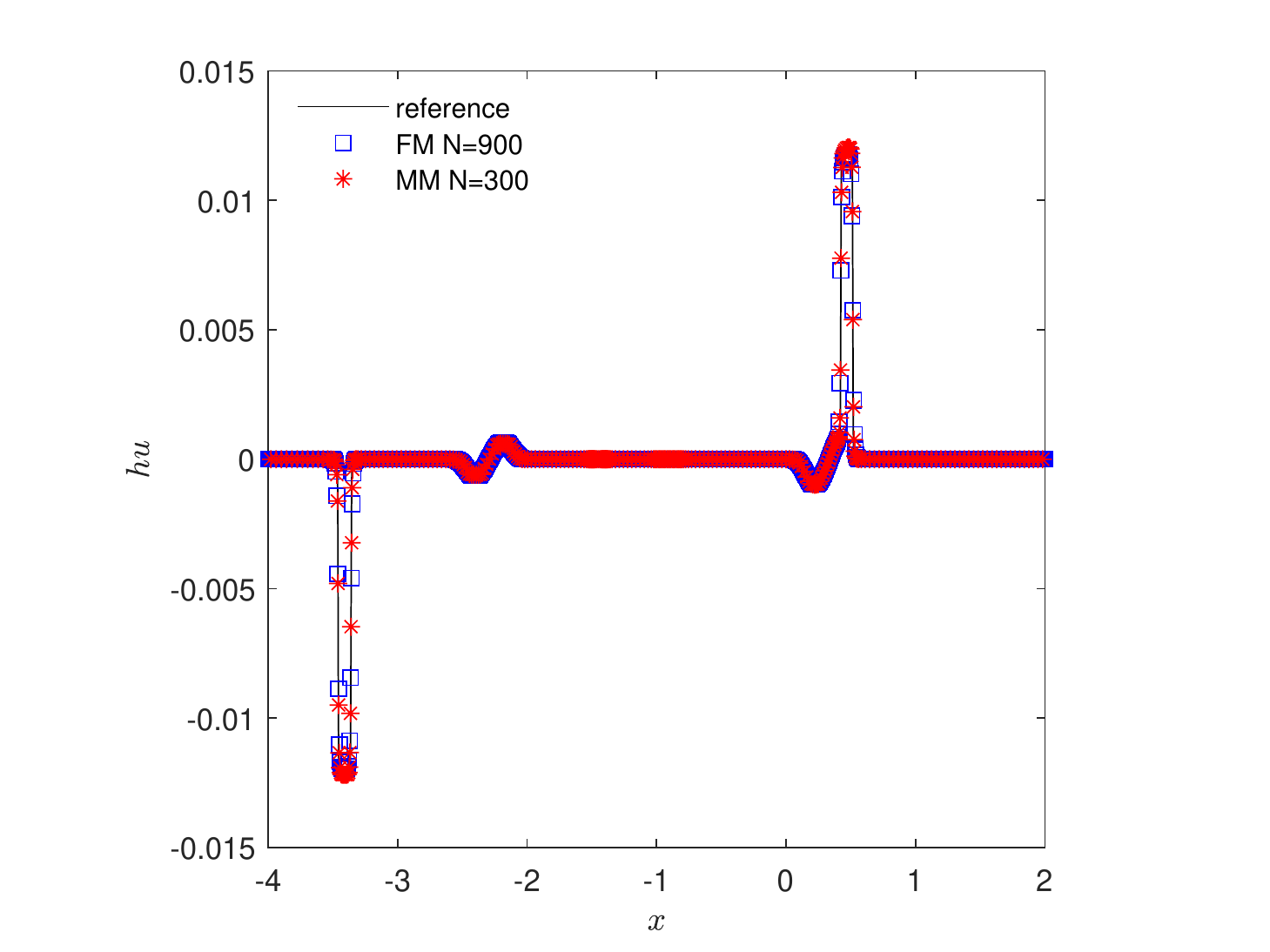}}
\subfigure[close view of (c)]{
\includegraphics[width=0.4\textwidth,trim=10 0 30 10,clip]
{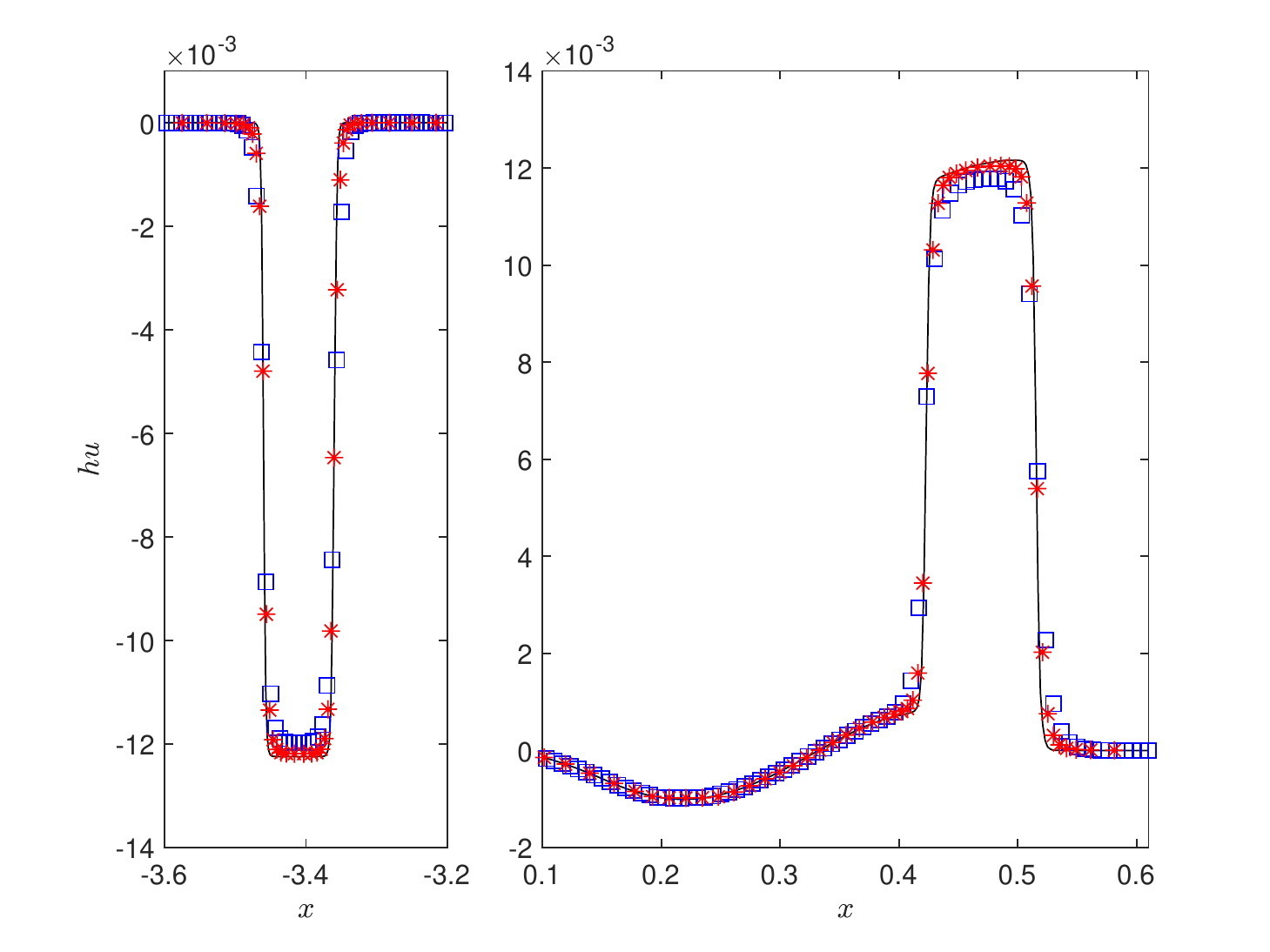}}
\caption{Example \ref{test3-1d} with initial data \eqref{Data-2}. The solution $hu$ at $t = 0.4$ obtained with $P^2$-DG and a moving mesh of $N=300$ and fixed meshes of $N=300$ and $N=900$.}
\label{Fig:test3-1d-s1-hu}
\end{figure}

\begin{figure}[H]
\centering
\subfigure[$h\theta$: MM 300 vs FM 300]{
\includegraphics[width=0.4\textwidth,trim=10 0 40 10,clip]
{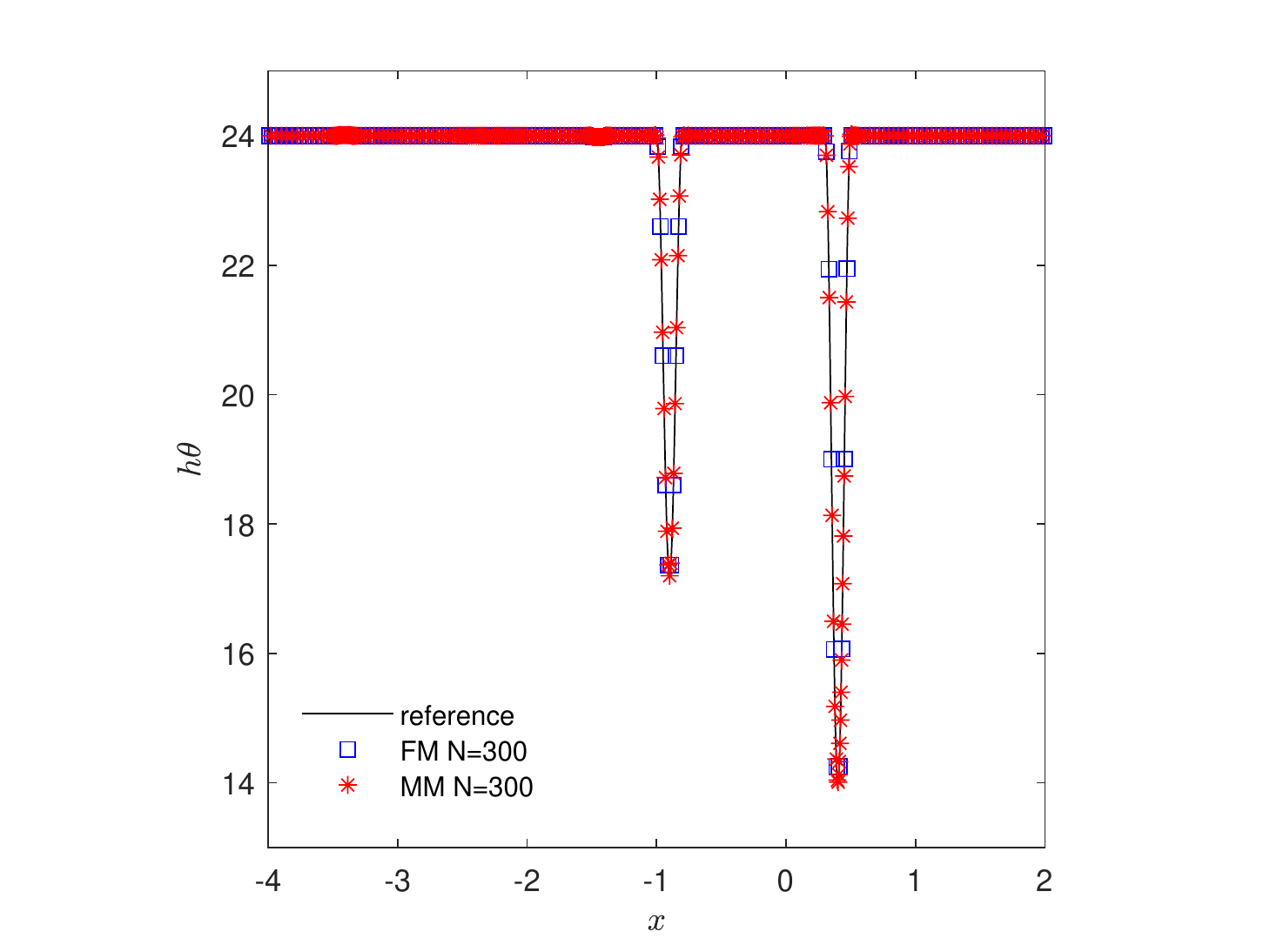}}
\subfigure[close view of (a)]{
\includegraphics[width=0.4\textwidth,trim=10 0 40 10,clip]
{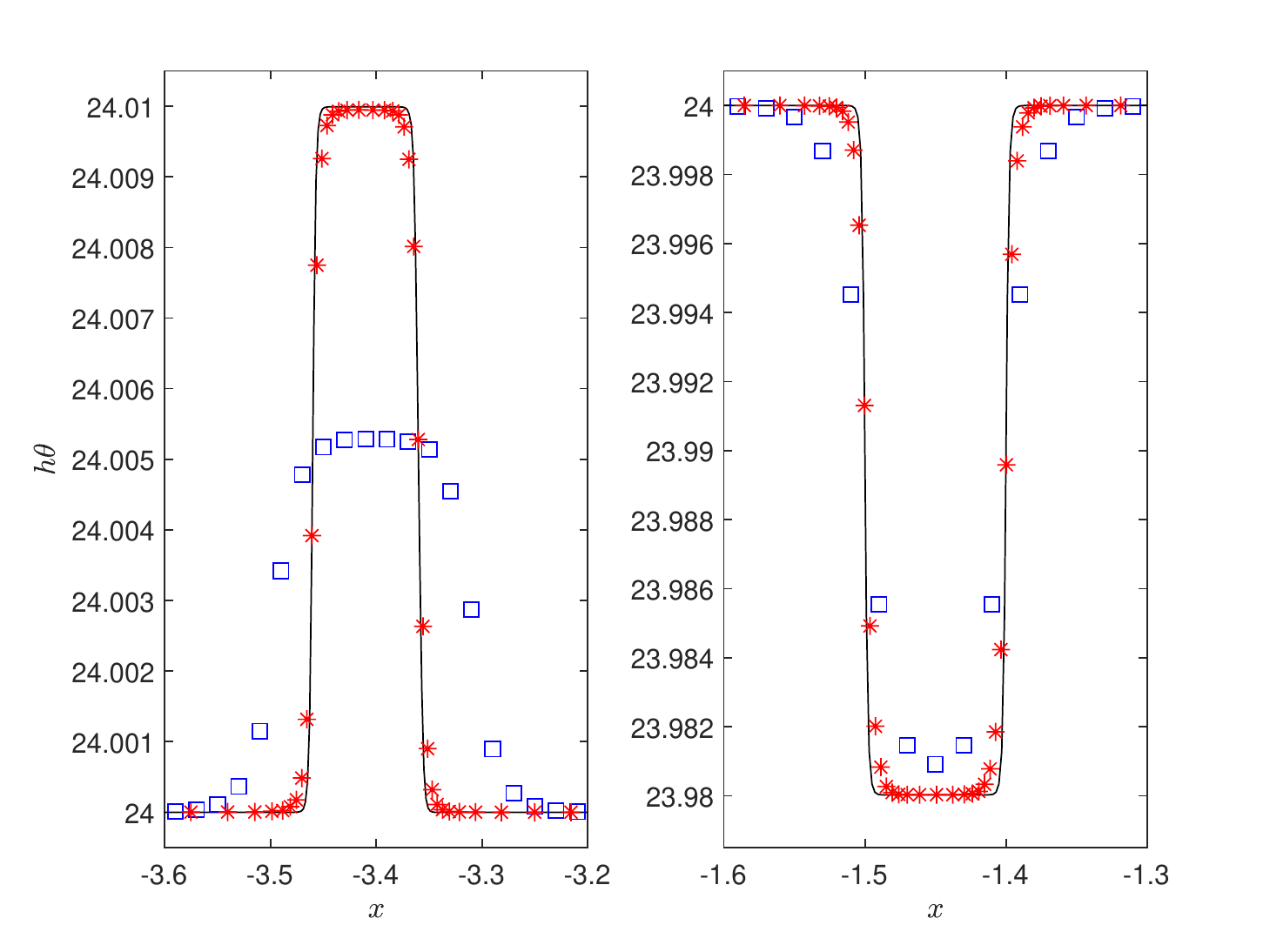}}
\subfigure[$h\theta$: MM 300 vs FM 900]{
\includegraphics[width=0.4\textwidth,trim=10 0 40 10,clip]
{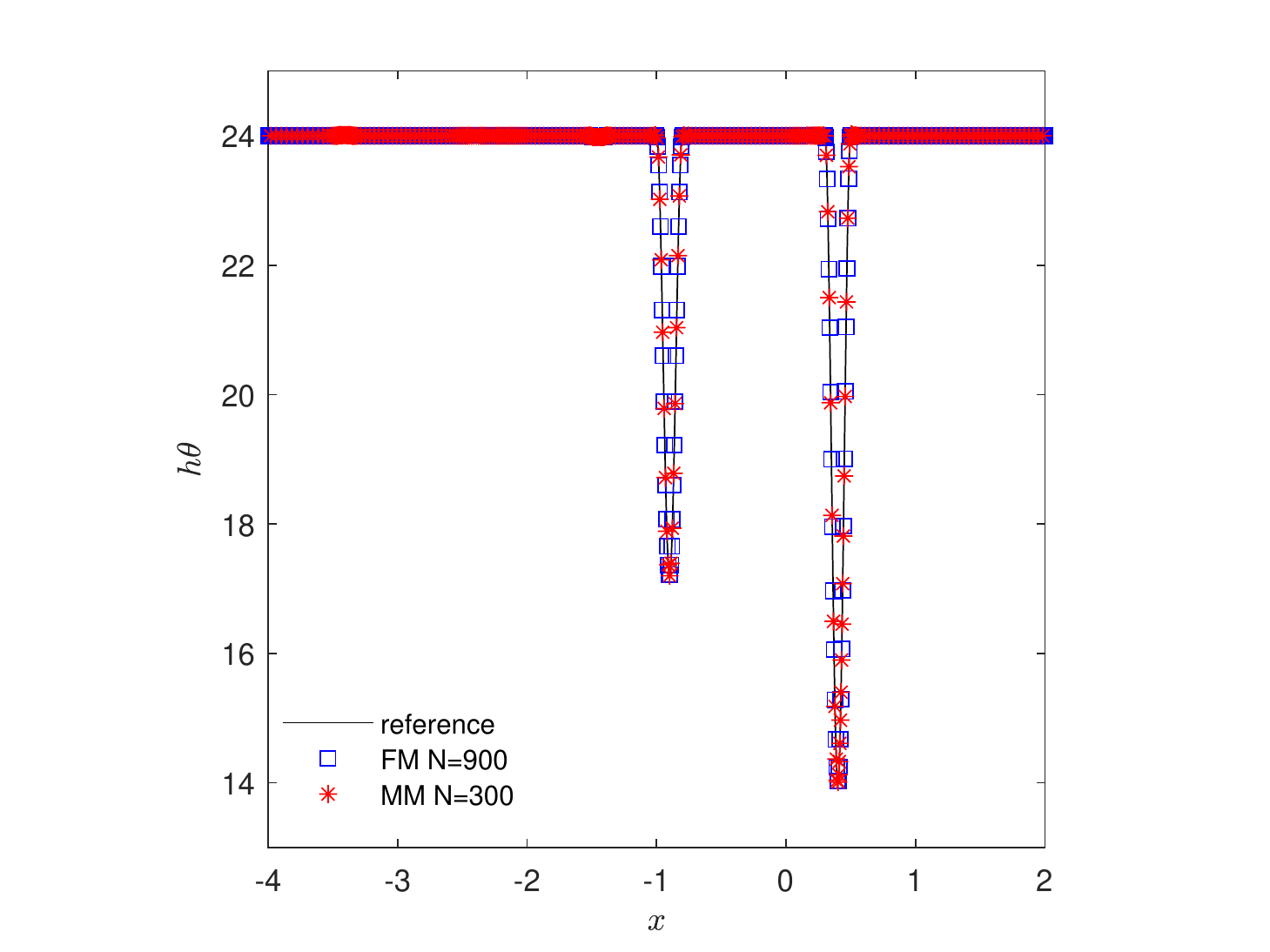}}
\subfigure[close view of (c)]{
\includegraphics[width=0.4\textwidth,trim=10 0 40 10,clip]
{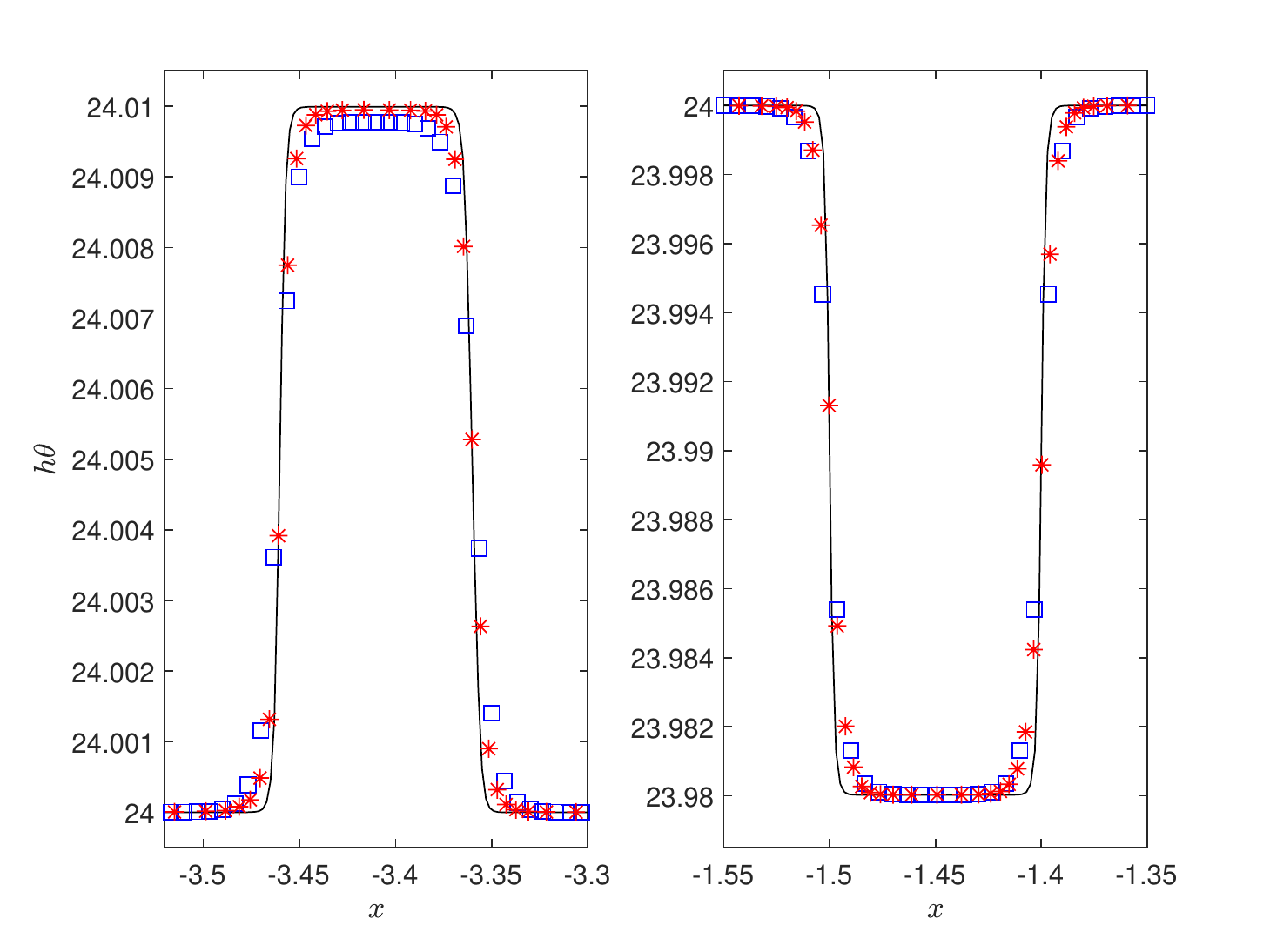}}
\caption{Example \ref{test3-1d} with initial data \eqref{Data-2}. The solution $h\theta$ at $t = 0.4$ obtained with $P^2$-DG and a moving mesh of $N=300$ and fixed meshes of $N=300$ and $N=900$.}
\label{Fig:test3-1d-s1-eta}
\end{figure}

\begin{example}\label{test4-1d}
(The dam break over the non-flat bottom for the 1D Ripa model.)
\end{example}
We choose this example to verify the ability of our well-balanced MM-DG scheme to simulate the dam break problem over a non-flat bottom. The computational domain is  $(-1,1)$.
The bottom topography and the initial conditions \cite{Britton-Xing-2020JSC} are given by
\begin{equation}\label{test4-1d-data}
\begin{split}
&b(x) = \begin{cases}
0.5(\cos(10\pi(x+0.3))+1),& \text{for } x \in (-0.4, -0.2) \\
0.75(\cos(10\pi(x-0.3))+1),& \text{for } x \in (0.2, 0.4) \\
0,& \text{otherwise}
\end{cases}
\\&
\big(h,u,\theta\big)(x,0)=
\begin{cases}
\big(5-b(x),~0,~3\big),& \text{for}~x < 0\\
\big(2-b(x),~0,~5\big),&\text{otherwise}.
\end{cases}
\end{split}
\end{equation}
We compute the solution up to $t=0.14$.

The mesh trajectories obtained with the $P^2$ MM-DG method with a moving mesh of $N=200$ are plotted
in Fig.~\ref{Fig:test4-1d-mesh-dam}.
The mesh has higher concentrations around the shock waves, contact discontinuity, and bottom bumps. It is clear that the mesh adaptation captures the shock waves before and after the split and the interaction of the shock waves with the bottom bumps.

The free water surface $h+b$ and the temperature $\theta$ and pressure $\frac{1}{2}gh^2\theta$
at $t = 0.14$ obtained with $P^2$-DG and a moving mesh of $N=200$
and fixed meshes of $N=200$ and $N=600$ are plotted in Figs.~\ref{Fig:test4-1d-dam-H} and
\ref{Fig:test4-1d-dam-P}, respectively.
The results show that the DG method with moving or fixed meshes
capture the discontinuity very well.
Moreover, the moving mesh solutions with $N=200$ are more accurate than those with
fixed meshes of $N=200$ and $N=600$.

\begin{figure}[H]
\centering
\includegraphics[width=0.4\textwidth,trim=0 0 20 10,clip]{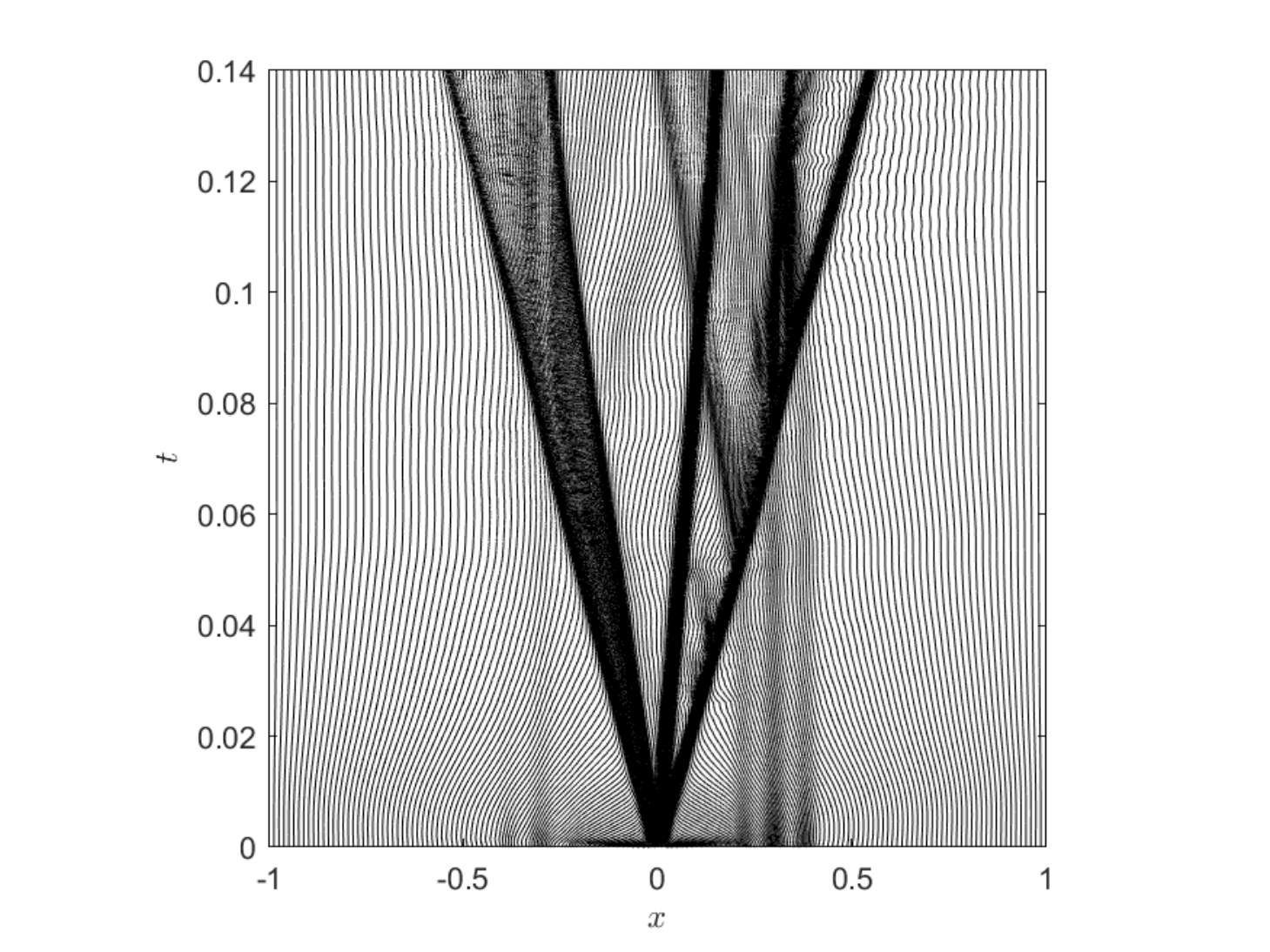}
\caption{Example \ref{test4-1d}. The mesh trajectories are obtained with $P^2$-DG of a moving mesh of $N=200$.}
\label{Fig:test4-1d-mesh-dam}
\end{figure}

\begin{figure}[H]
\centering
\subfigure[$h+b$: MM 200 vs FM 200]{
\includegraphics[width=0.4\textwidth,trim=10 0 40 10,clip]
{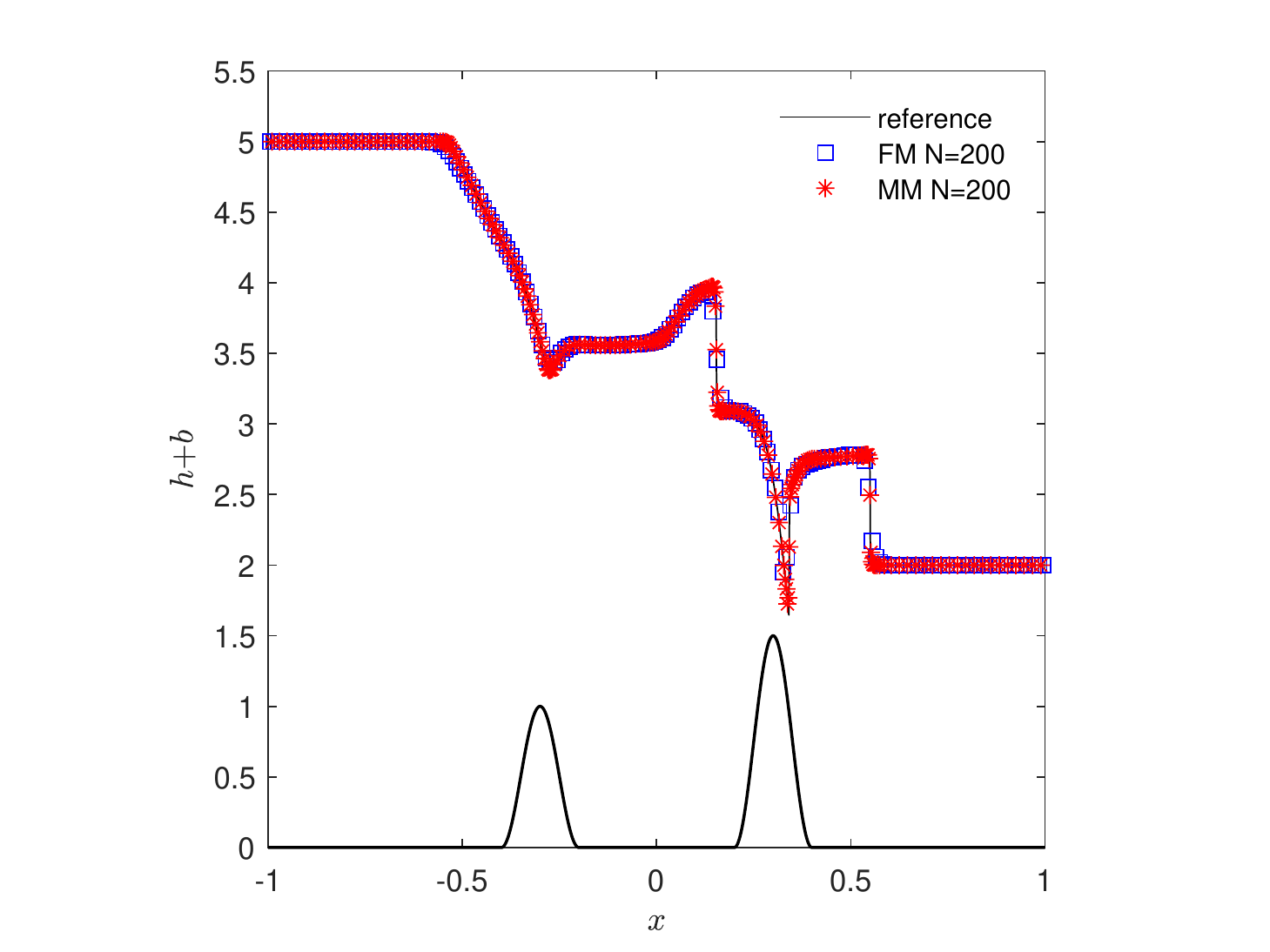}}
\subfigure[close view of (a)]{
\includegraphics[width=0.4\textwidth,trim=10 0 40 10,clip]
{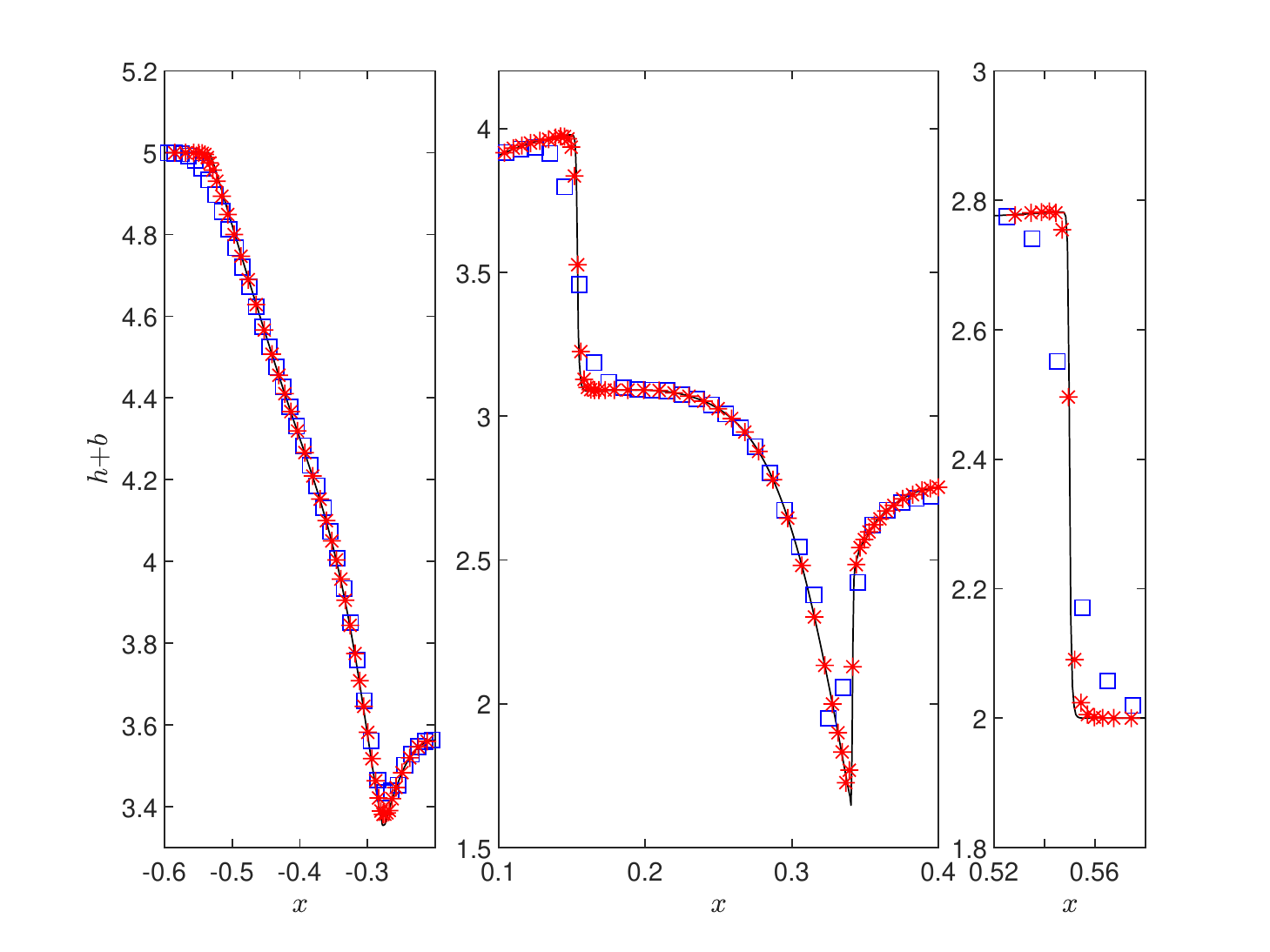}}
\subfigure[$h+b$: MM 200 vs FM 600]{
\includegraphics[width=0.4\textwidth,trim=10 0 40 10,clip]
{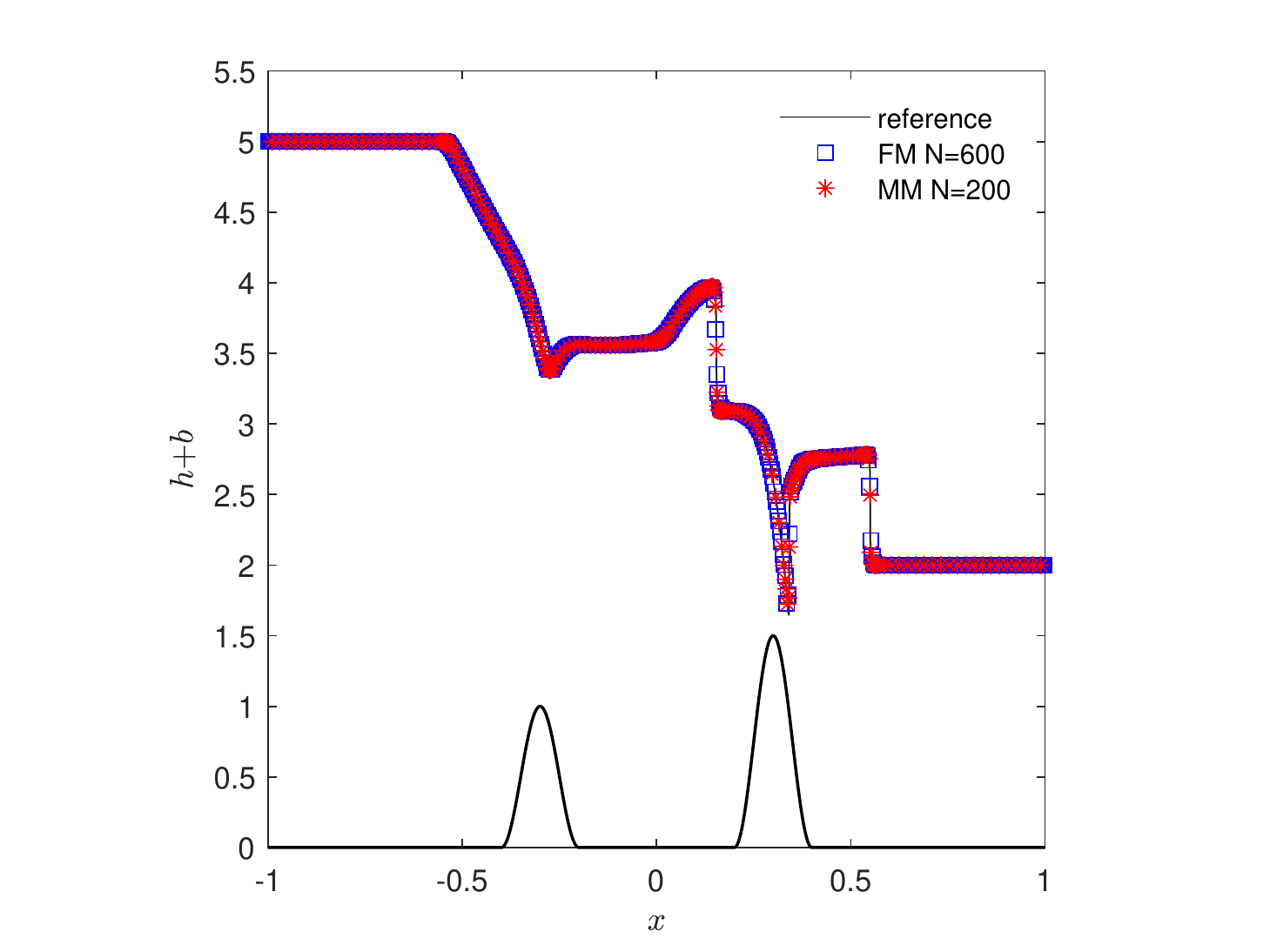}}
\subfigure[close view of (c)]{
\includegraphics[width=0.4\textwidth,trim=10 0 40 10,clip]
{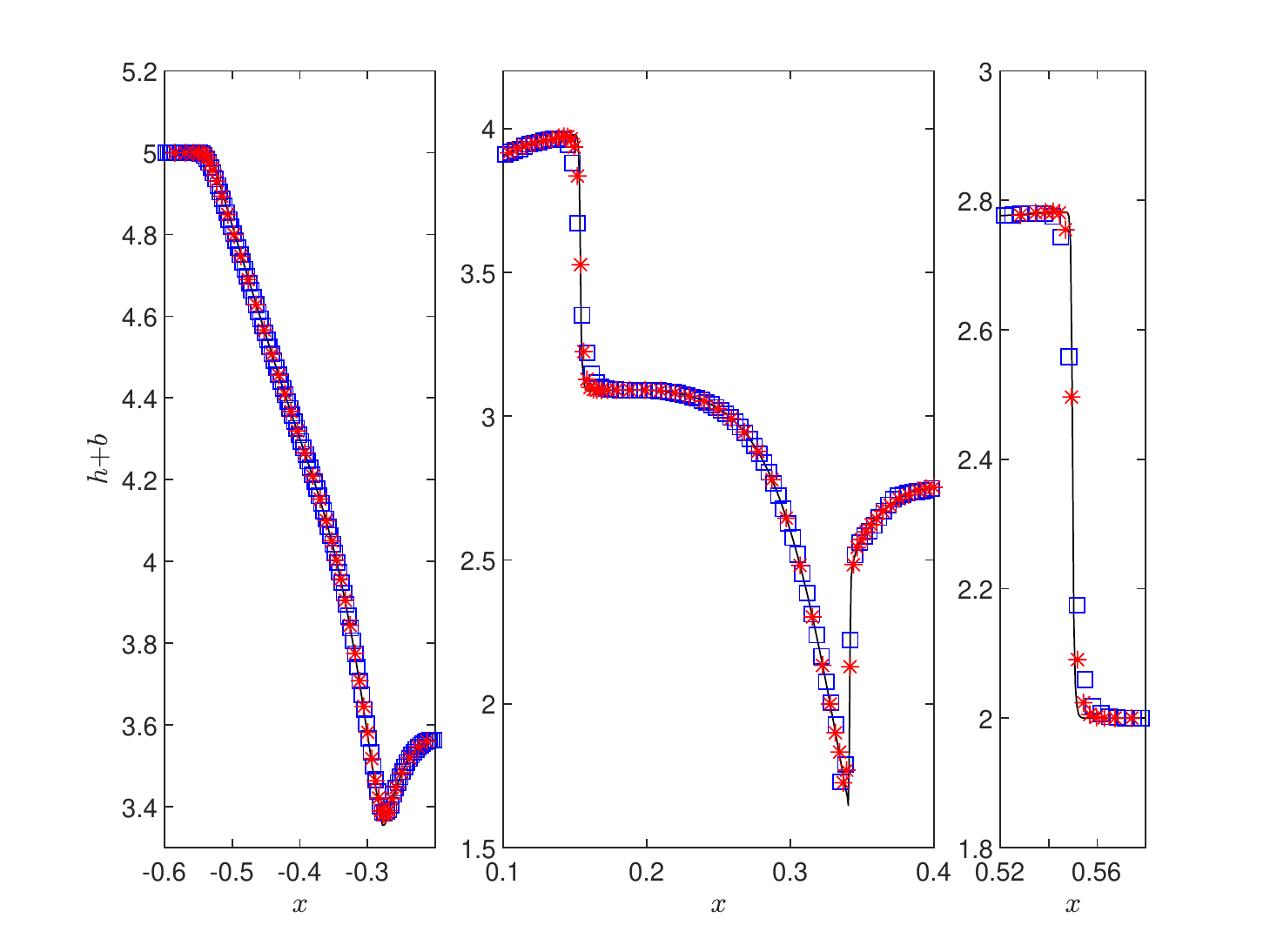}}
\caption{Example \ref{test4-1d}. The free water surface $h+b$ at $t = 0.14$ obtained with $P^2$-DG and a moving mesh of $N=200$ and fixed meshes of $N=200$ and $N=600$.}
\label{Fig:test4-1d-dam-H}
\end{figure}

\begin{figure}[H]
\centering
\subfigure[$\theta$: MM 200 vs FM 200]{
\includegraphics[width=0.4\textwidth,trim=10 0 40 10,clip]
{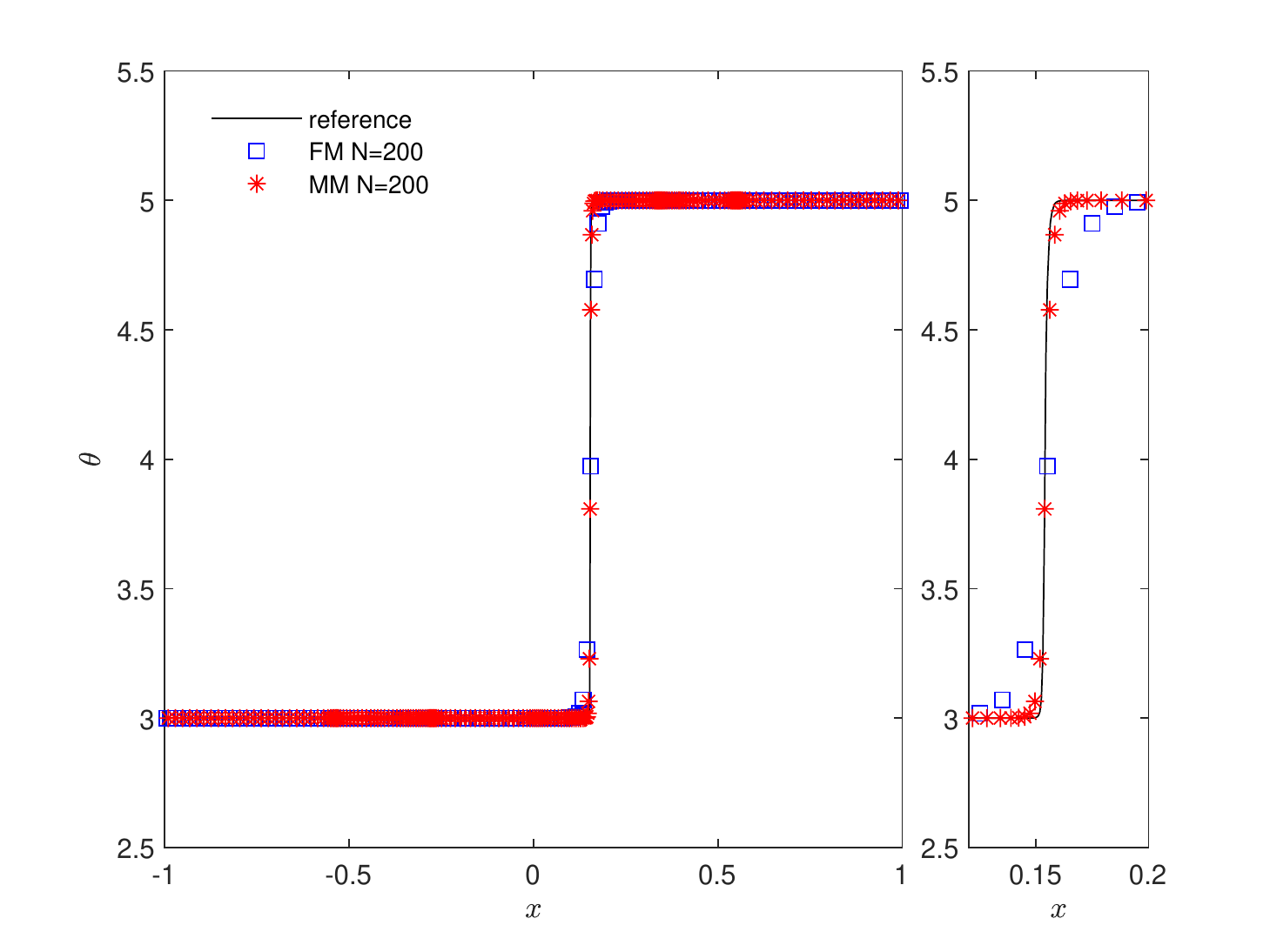}}
\subfigure[$\theta$: MM 200 vs FM 600]{
\includegraphics[width=0.4\textwidth,trim=10 0 40 10,clip]
{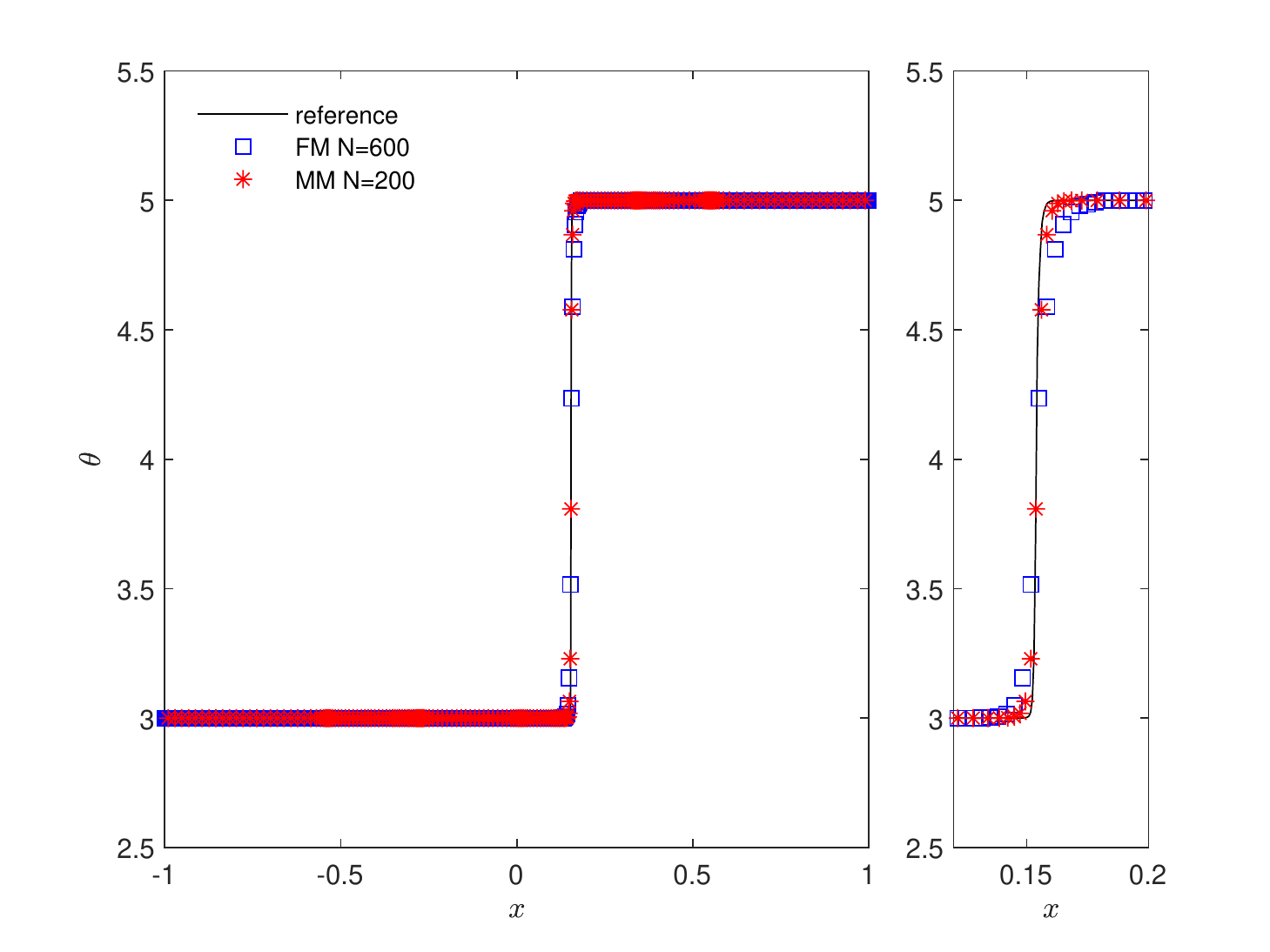}}
\subfigure[$\frac{1}{2}gh^2\theta$: MM 200 vs FM 200]{
\includegraphics[width=0.4\textwidth,trim=10 0 40 10,clip]
{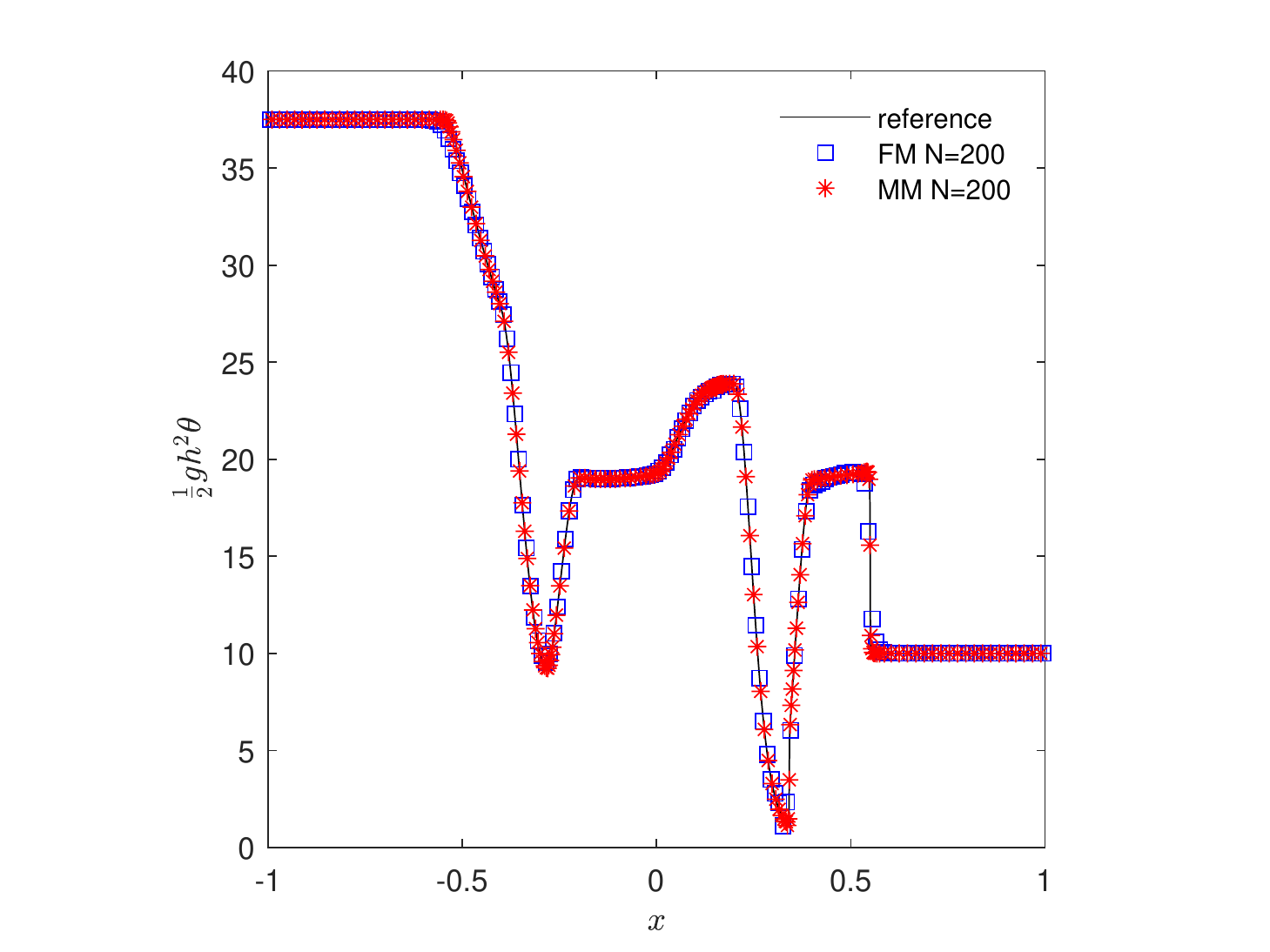}}
\subfigure[$\frac{1}{2}gh^2\theta$: MM 200 vs FM 600]{
\includegraphics[width=0.4\textwidth,trim=10 0 40 10,clip]
{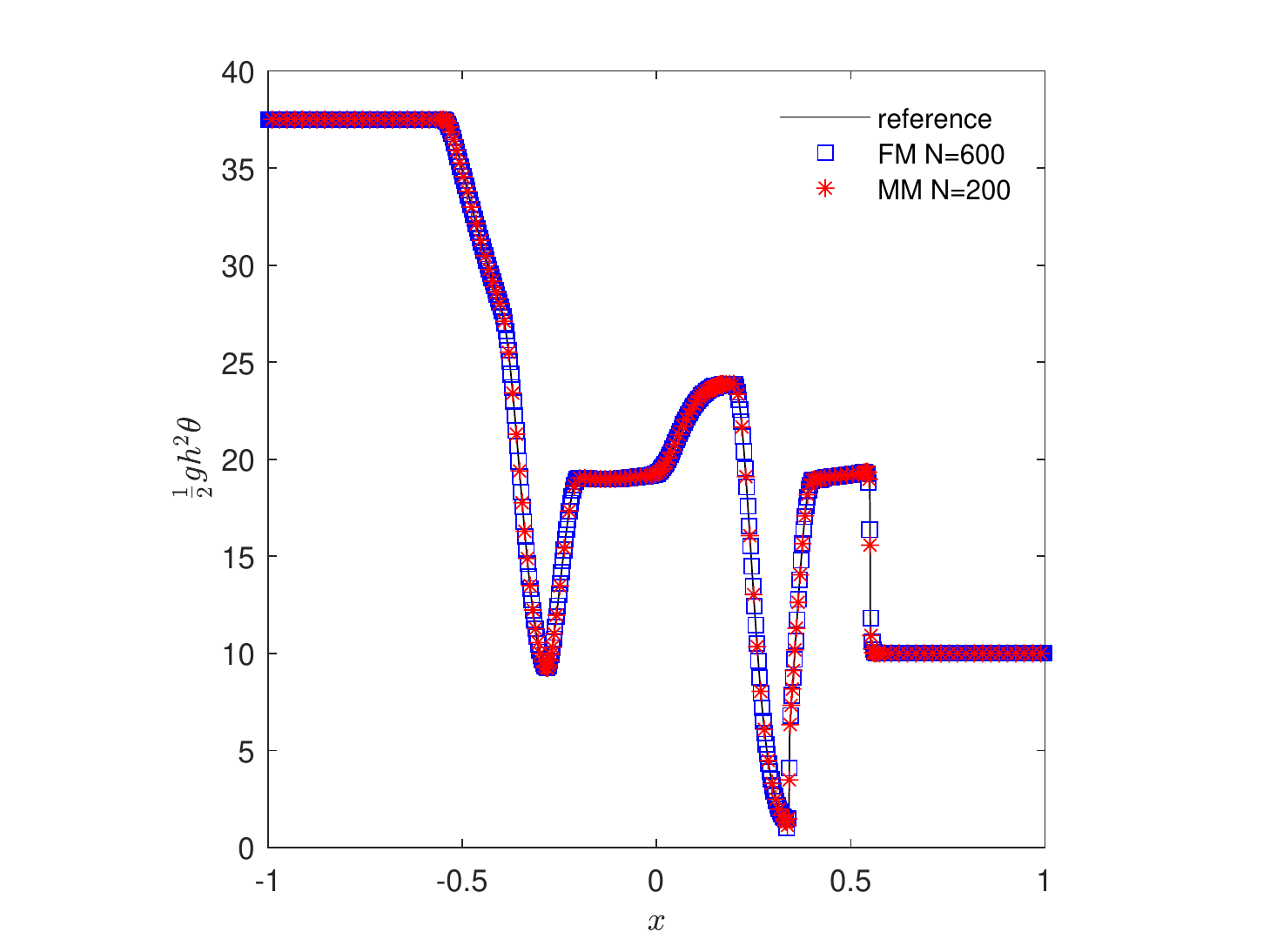}}
\caption{Example \ref{test4-1d}. The temperature $\theta$ and pressure $\frac{1}{2}gh^2\theta$ at $t = 0.14$ obtained with $P^2$-DG and a moving mesh of $N=200$ and fixed meshes of $N=200$ and $N=600$.}
\label{Fig:test4-1d-dam-P}
\end{figure}

\begin{example}\label{test0-1d}
(The dam break over the non-flat bottom with a dry region for the 1D Ripa model.)
\end{example}
We choose this example to verify the ability of our well-balanced MM-DG scheme to preserve the positivity of the water depth and temperature. The computational domain is  $(-1, 1)$.
Similar examples have been used in \cite{Chertock-etal-2014,Saleem-etal-2018KFVX,Sanchez-etal-2016HLLC}.
The initial data and the bottom topography for this example are given by
\begin{equation}\label{test0-1d-U0}
\begin{split}
&b(x) = \begin{cases}
2.0(\cos(10\pi(x+0.3))+1),& \text{for } x \in (-0.4, -0.2) \\
0.5(\cos(10\pi(x-0.3))+1),& \text{for } x \in (0.2, 0.4) \\
0,& \text{otherwise}
\end{cases}\\
&\big(h,u,\theta\big)(x,0)=
\begin{cases}
\big(5-b(x),~0,~1\big),& \text{for}~x < 0\\
\big(1-b(x),~0,~5\big),&\text{otherwise}
\end{cases}
\end{split}
\end{equation}
which contain a dry region near $x =0.3$. To ensure positivity preservation \cite{Xing-Zhang-Shu-2010ppSWEs} we take a smaller CFL number 0.15.  The solution is computed up to $t=0.3$.

The mesh trajectories obtained with the $P^2$ MM-DG method with a moving mesh of $N=200$ are plotted
in Fig.~\ref{Fig:test0-1d-mesh-pp}.
The mesh has higher concentrations around the shock waves, contact discontinuity, and bottom bumps, which shows that the adaptation captures the shock waves before and after the split and the interaction of the shock waves with the bottom bumps.

The free water surface $h+b$ and the temperature $\theta$ and pressure $\frac{1}{2}gh^2\theta$
at $t = 0.3$ obtained with $P^2$-DG and a moving mesh of $N=200$
and fixed meshes of $N=200$ and $N=600$ are plotted in Figs.~\ref{Fig:test0-1d-H-pp} and \ref{Fig:test0-1d-P-pp}, respectively.
The results show that the DG method with moving or fixed meshes
capture the discontinuity very well.
Moreover, the moving mesh solutions with $N=200$ are more accurate than those with
fixed meshes of $N=200$ and $N=600$. The results also demonstrate that
the scheme can preserve the positivity of the water depth $h$ and temperature $\theta$.
\begin{figure}[H]
\centering
\includegraphics[width=0.4\textwidth,trim=0 0 20 10,clip]{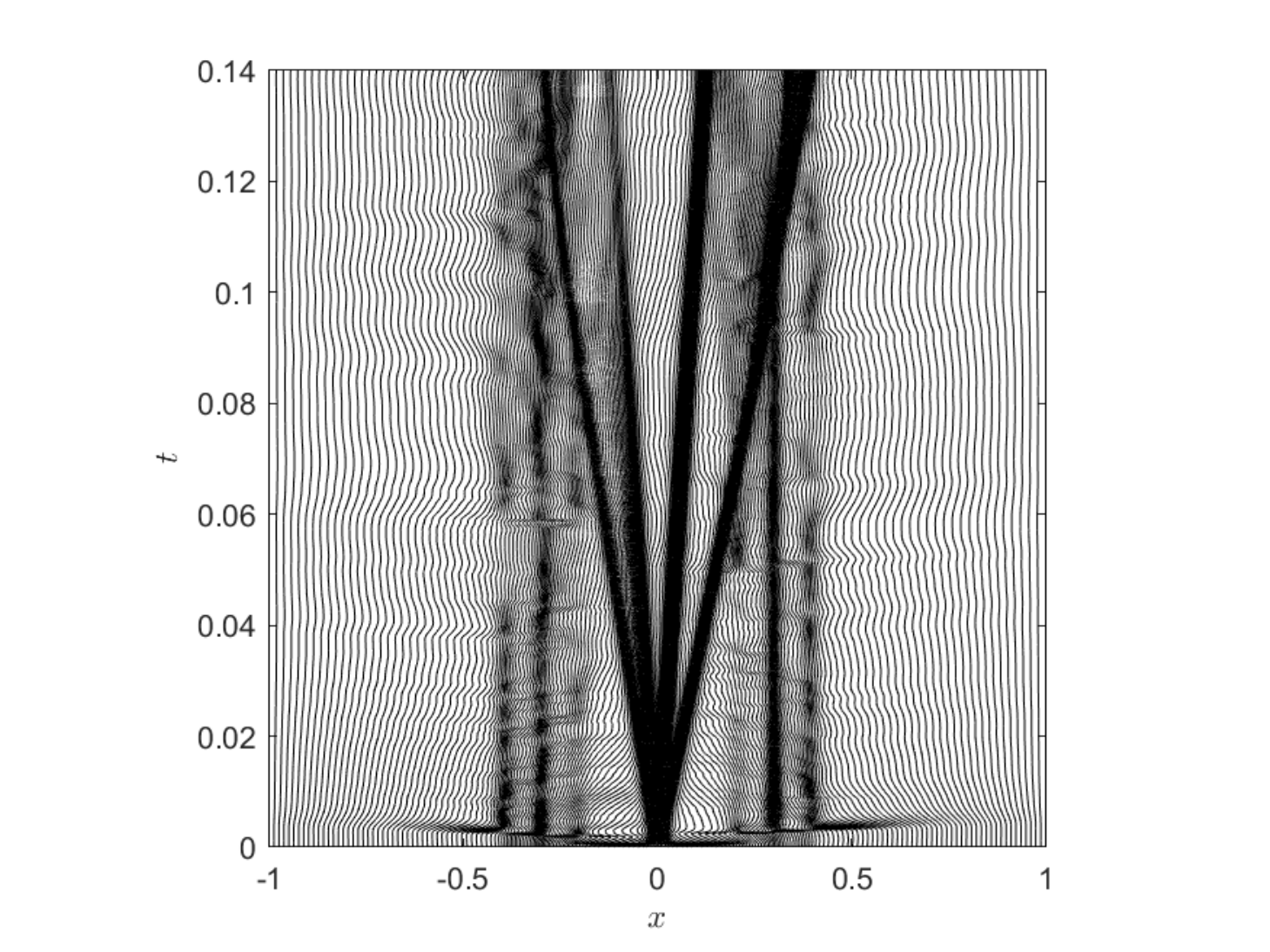}
\caption{Example \ref{test0-1d}. The mesh trajectories are obtained with $P^2$-DG of a moving mesh of $N=200$.}
\label{Fig:test0-1d-mesh-pp}
\end{figure}

\begin{figure}[H]
\centering
\subfigure[$h+b$: MM 200 vs FM 200]{
\includegraphics[width=0.4\textwidth,trim=10 0 40 10,clip]
{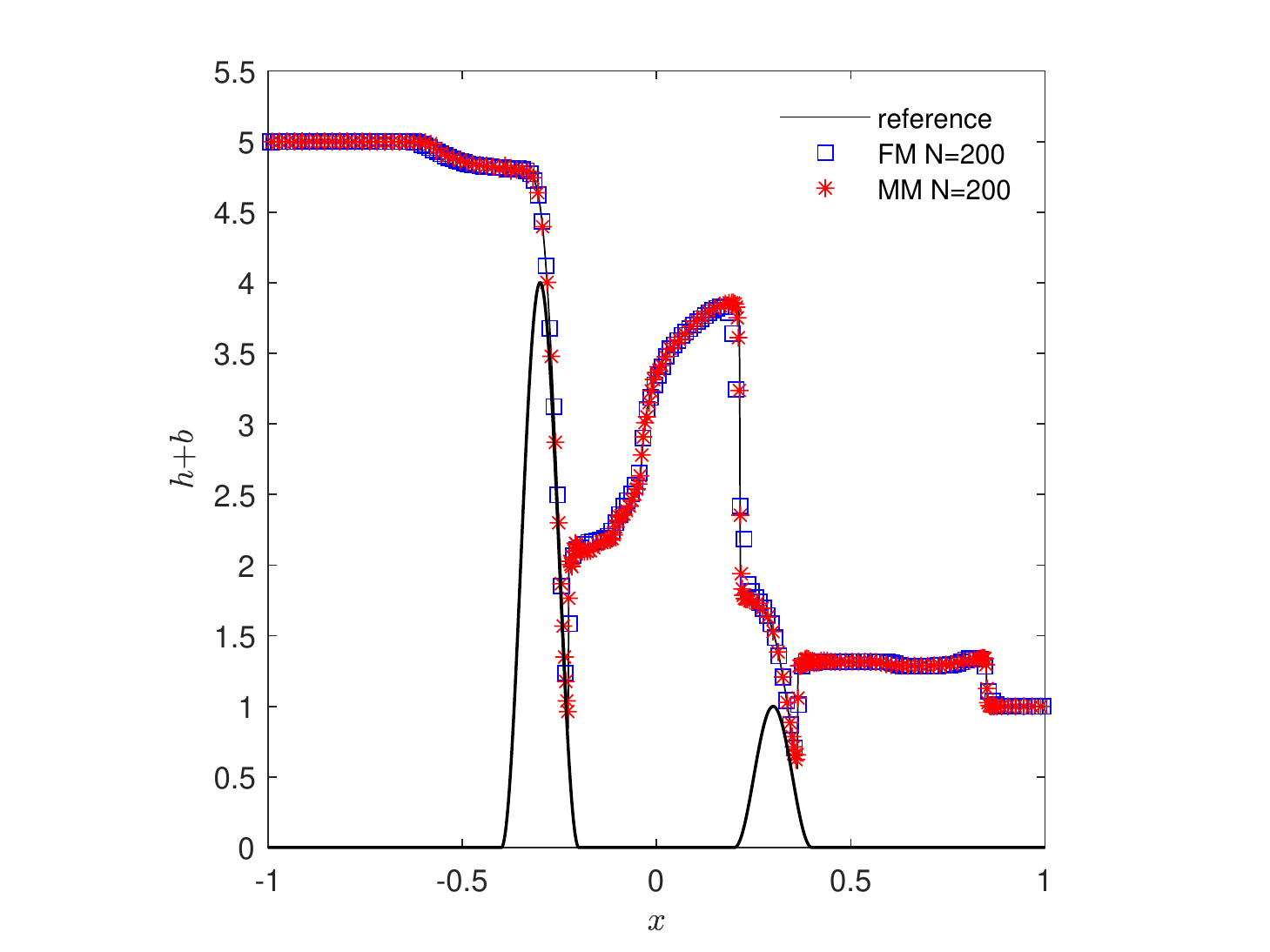}}
\subfigure[close view of (a)]{
\includegraphics[width=0.4\textwidth,trim=10 0 40 10,clip]
{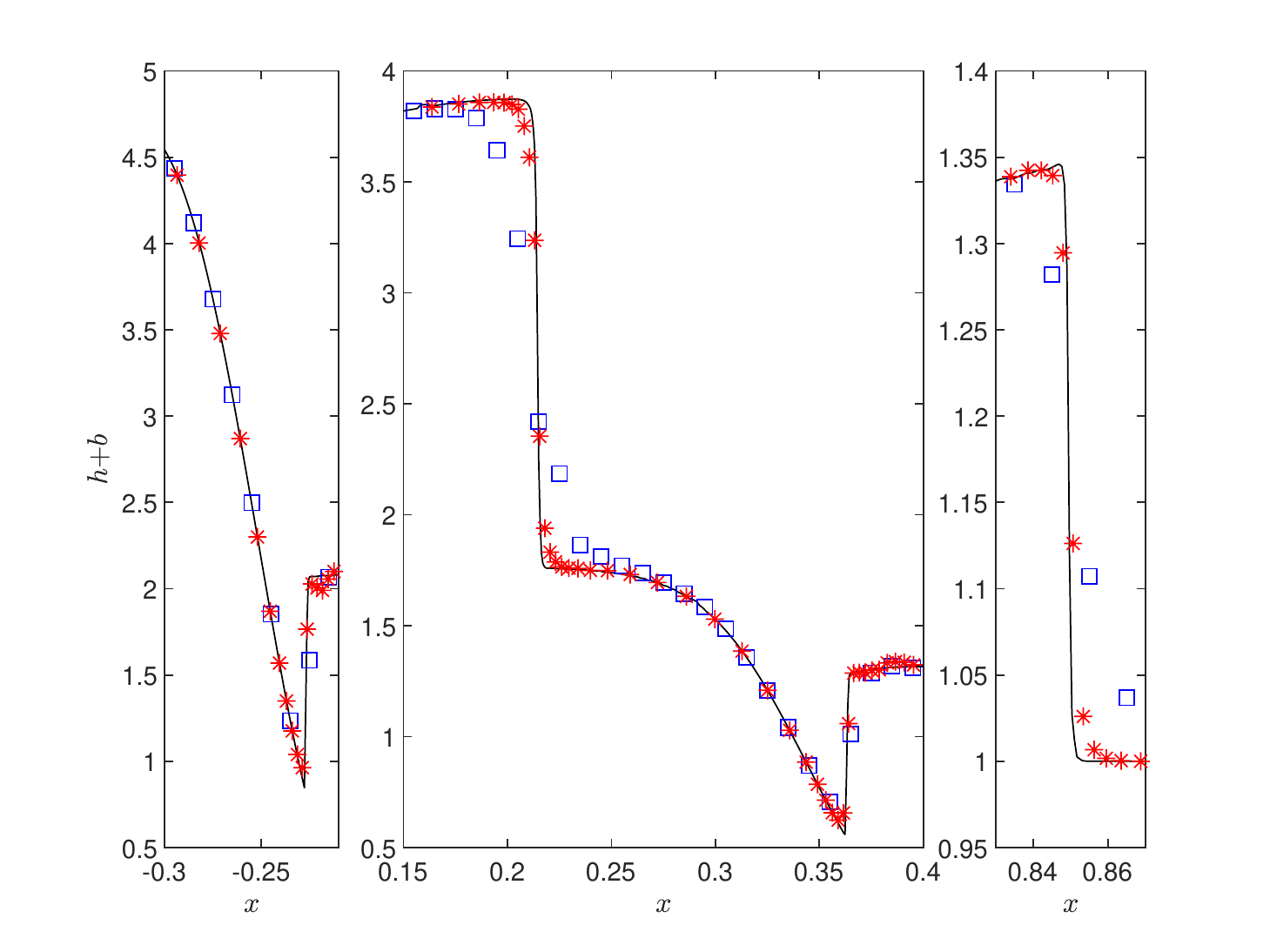}}
\subfigure[$h+b$: MM 200 vs FM 600]{
\includegraphics[width=0.4\textwidth,trim=10 0 40 10,clip]
{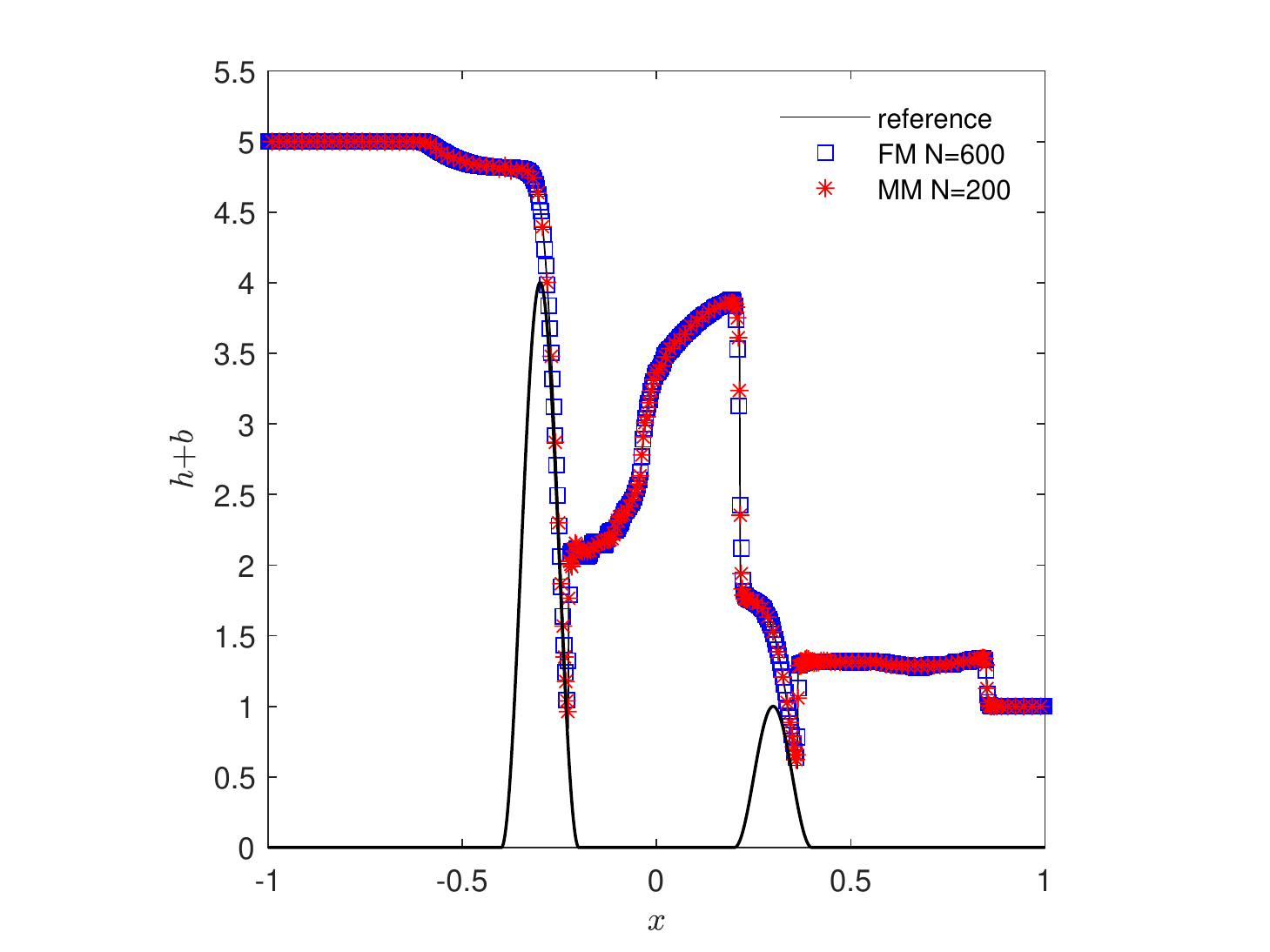}}
\subfigure[close view of (c)]{
\includegraphics[width=0.4\textwidth,trim=10 0 40 10,clip]
{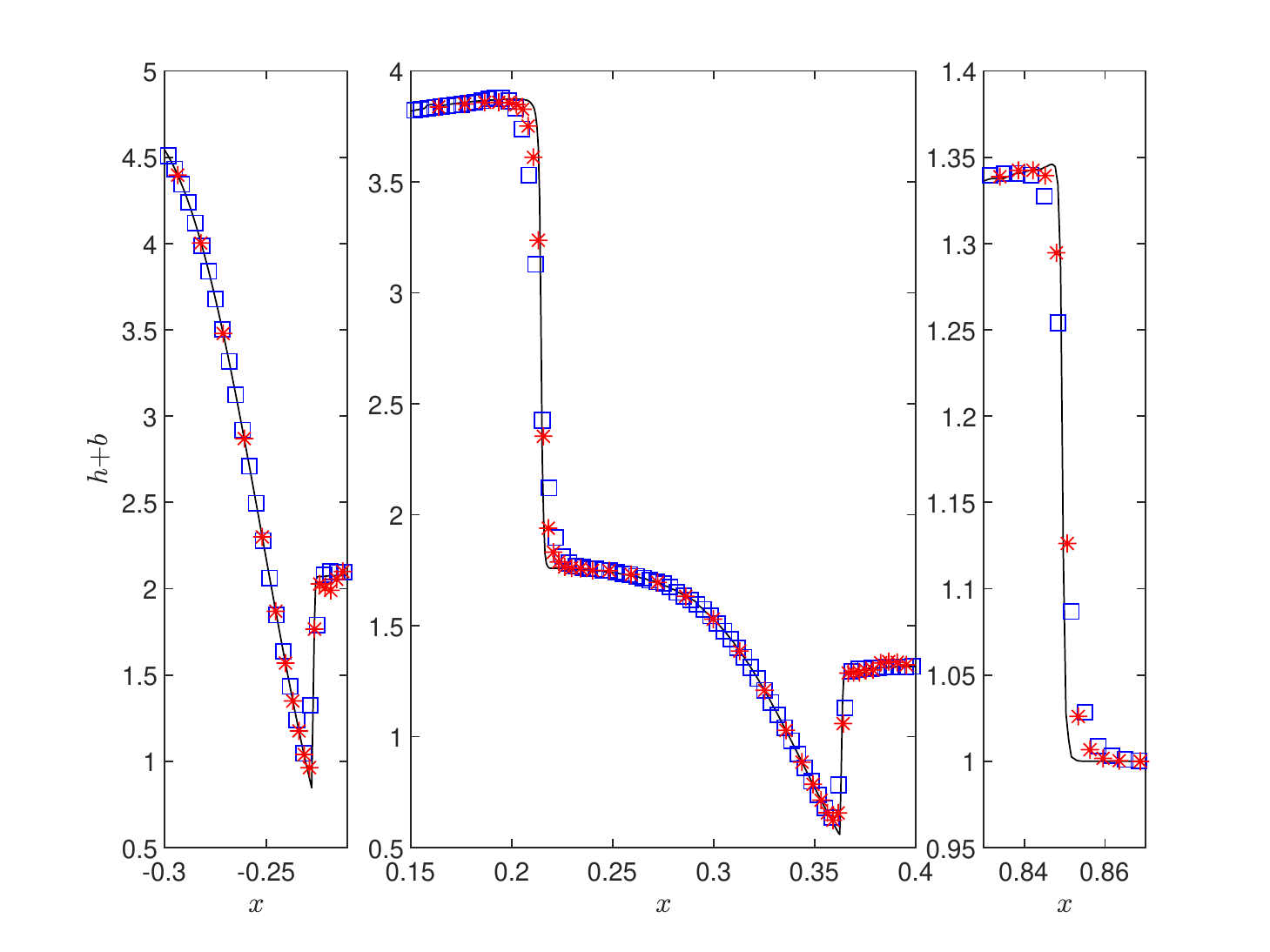}}
\caption{Example \ref{test0-1d}. The free water surface $h+b$ at $t = 0.3$ obtained with $P^2$-DG and a moving mesh of $N=200$ and fixed meshes of $N=200$ and $N=600$.}
\label{Fig:test0-1d-H-pp}
\end{figure}

\begin{figure}[H]
\centering
\subfigure[$\theta$: MM 200 vs FM 200]{
\includegraphics[width=0.4\textwidth,trim=10 0 40 10,clip]
{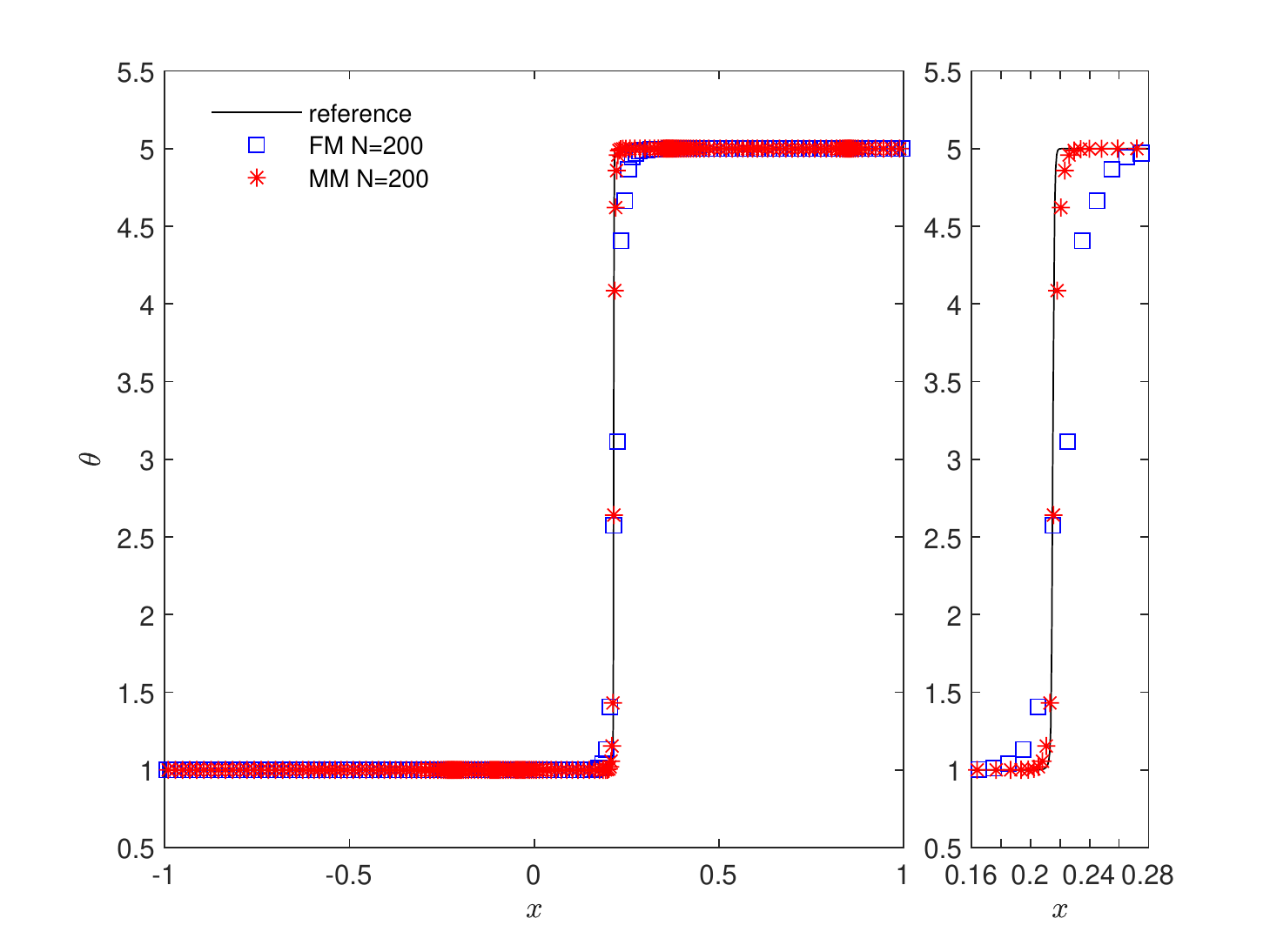}}
\subfigure[$\theta$: MM 200 vs FM 600]{
\includegraphics[width=0.4\textwidth,trim=10 0 40 10,clip]
{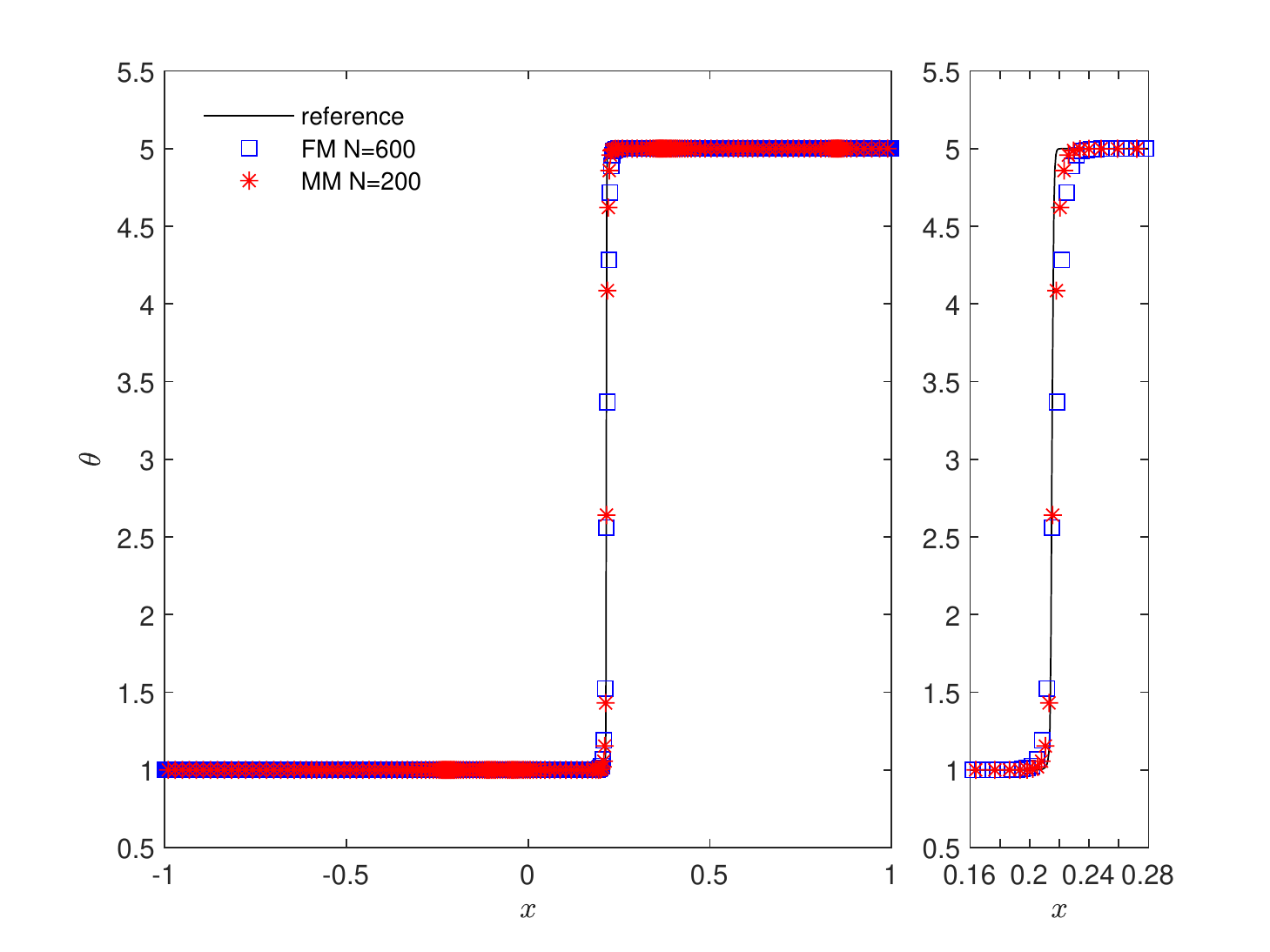}}
\subfigure[$\frac{1}{2}gh^2\theta$: MM 200 vs FM 200]{
\includegraphics[width=0.4\textwidth,trim=10 0 40 10,clip]
{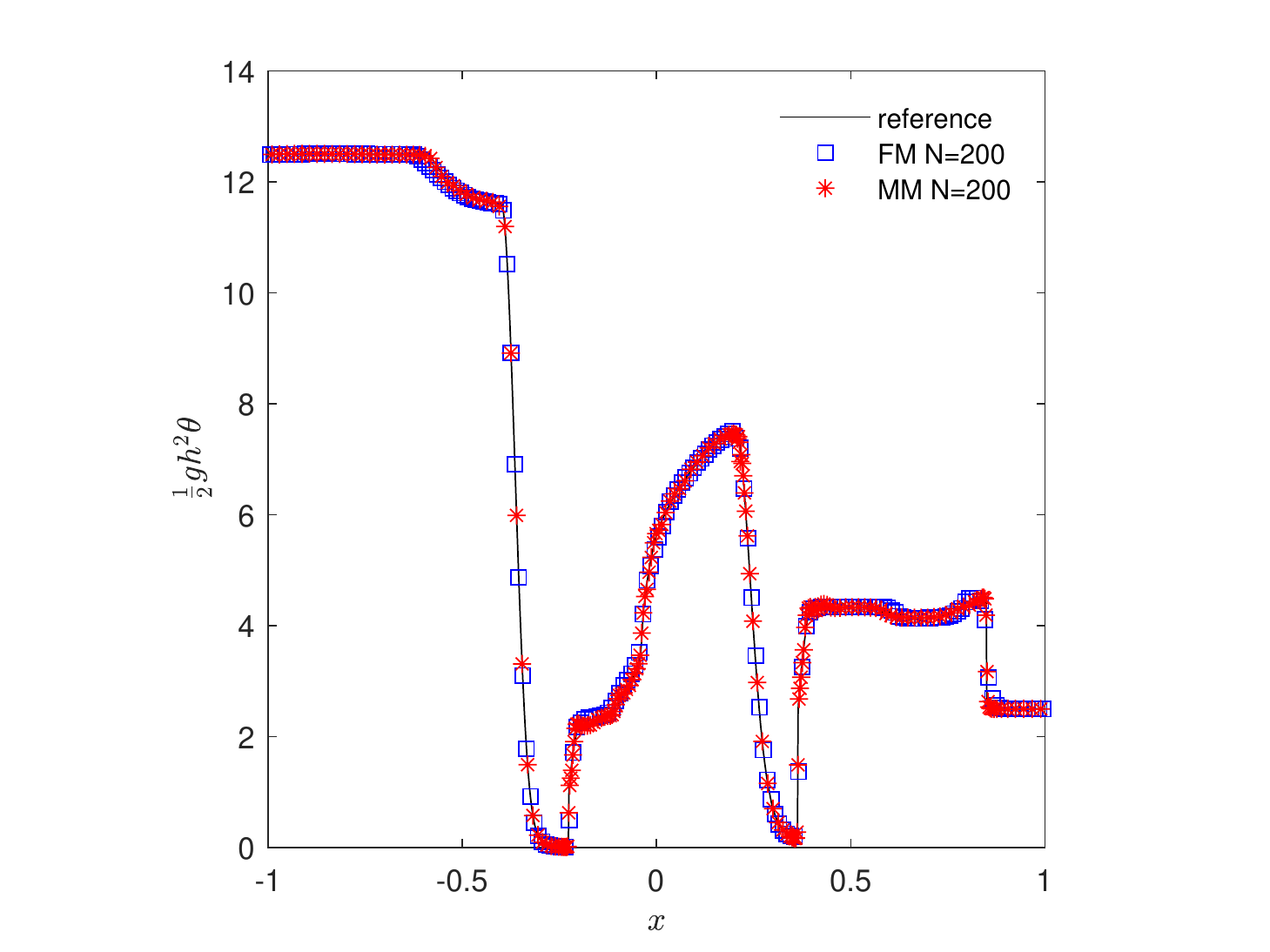}}
\subfigure[$\frac{1}{2}gh^2\theta$: MM 200 vs FM 600]{
\includegraphics[width=0.4\textwidth,trim=10 0 40 10,clip]
{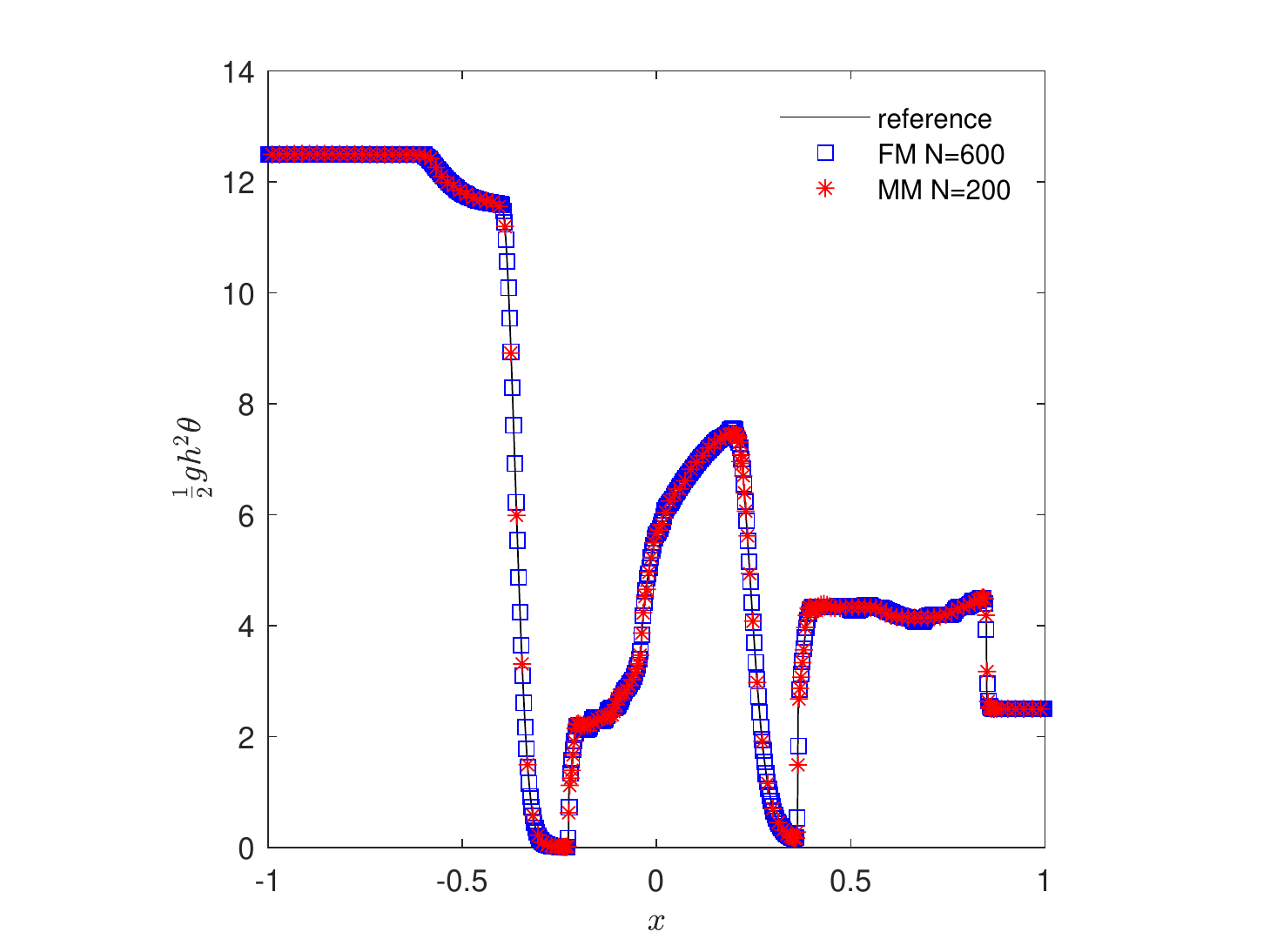}}
\caption{Example \ref{test0-1d}. The temperature $\theta$ and pressure $\frac{1}{2}gh^2\theta$ at $t = 0.3$ obtained with $P^2$-DG and a moving mesh of $N=200$ and fixed meshes of $N=200$ and $N=600$.}
\label{Fig:test0-1d-P-pp}
\end{figure}

\begin{example}\label{test1-2d}
(The lake-at-rest steady-state flow test for the 2D Ripa model.)
\end{example}
We choose this example to verify the well-balance property of the MM-DG scheme in two dimensions.
We solve the system on the domain $\Omega = (-1,1)\times(-1,1)$. The bottom topography reads as
\begin{equation}
b(x,y)=\begin{cases}
0.5e^{-100((x+0.5)^2+(y+0.5)^2)},& \text{for}~x<0\\
0.6e^{-100((x-0.5)^2+(y-0.5)^2)},& \text{otherwise.}
\end{cases}
\end{equation}
The initial water level, velocities and temperature are given by
\begin{equation*}
h(x,y,0)=3-b(x,y),
\quad u(x,y,0)=0,\quad  v(x,y,0)=0,\quad \theta(x,y,0)=\frac{4}{3}.
\end{equation*}
We use periodic boundary conditions for all unknown variables and compute the solution up to $t=0.12$.
Initial meshes used in the computation are shown in Fig.~\ref{Fig:test-2d-tri}.
The $L^1$ and $L^\infty$ error for solutions $h+b$, $hu$, $hv$, and $h\theta$
at $t = 0.12$ is listed in Table~\ref{tab:test1-2d-error}.
They show that our MM-DG method maintains the lake-at-rest steady state
to the level of round-off error in both $L^1$ and $L^\infty$ norm.

\begin{figure}[H]
\centering
\subfigure[$N=400$]{
\includegraphics[width=0.4\textwidth,trim=40 0 40 10,clip]{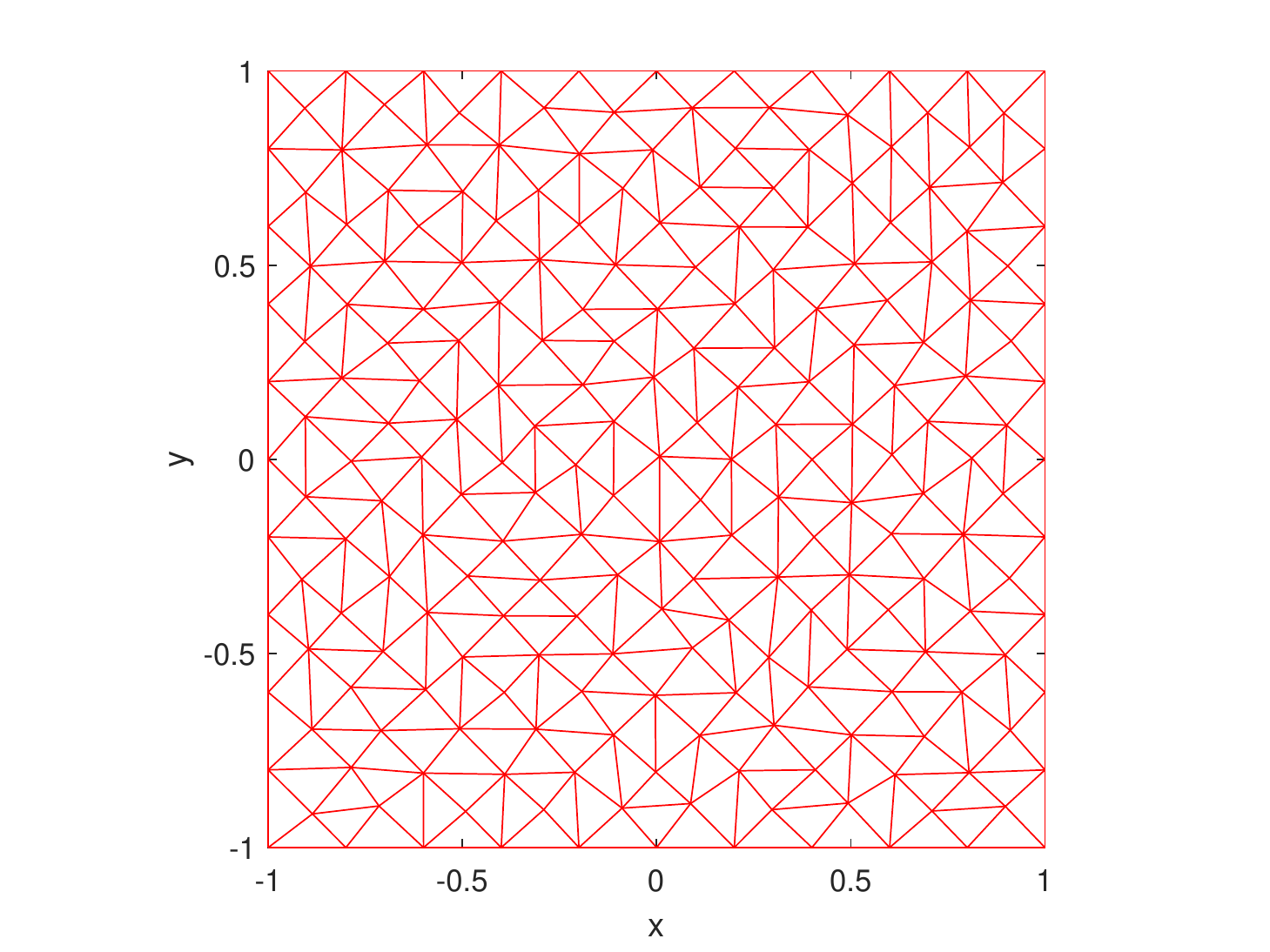}}
\subfigure[$N=1600$]{
\includegraphics[width=0.4\textwidth,trim=40 0 40 10,clip]{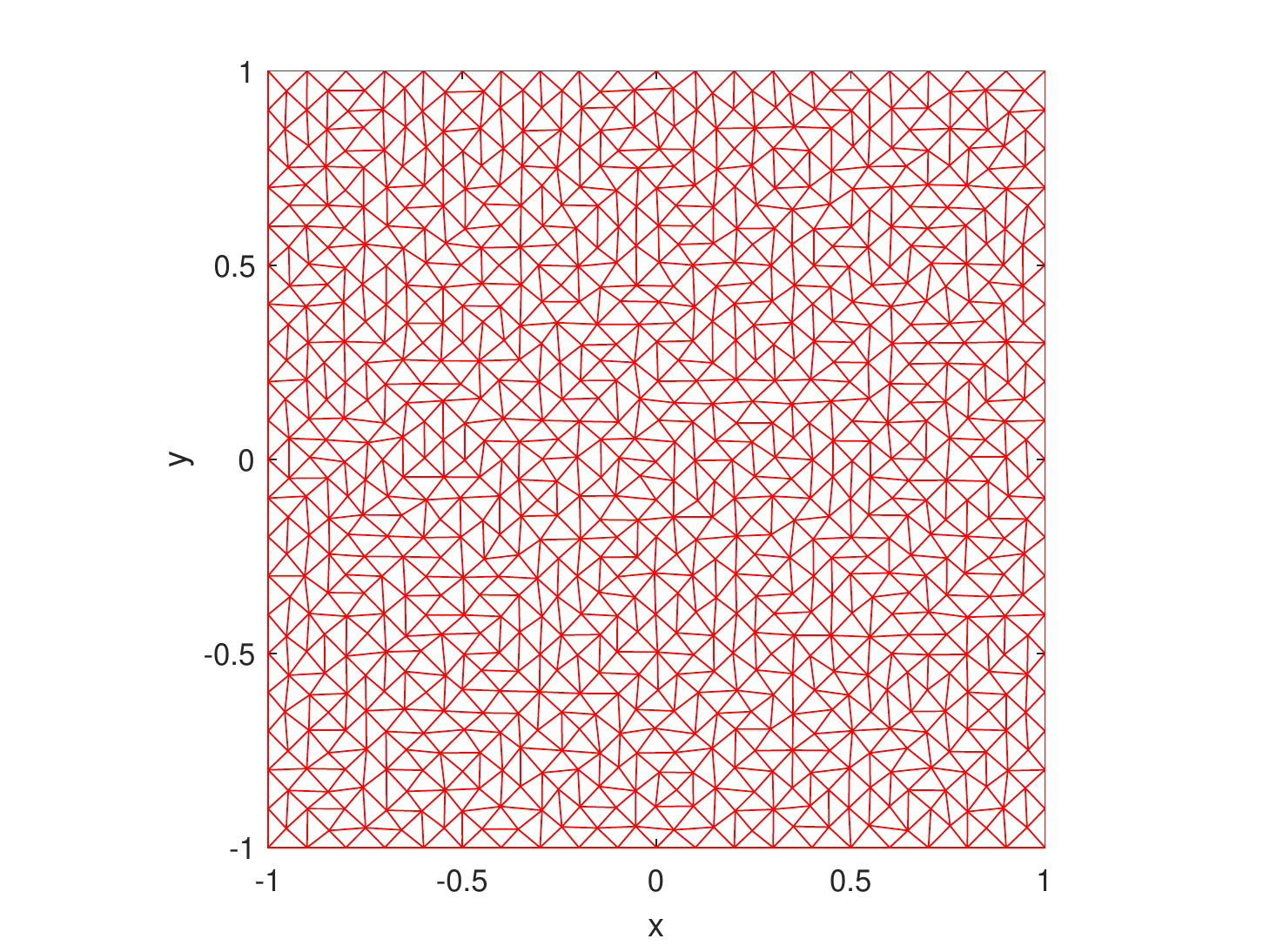}}
\caption{Example \ref{test1-2d}. Initial meshes used in the moving mesh computation.}
\label{Fig:test-2d-tri}
\end{figure}

\begin{table}[h]
\caption{Example \ref{test1-2d}. Well-balance test for the $P^2$ MM-DG method over the two bumps bottom function.}
\vspace{3pt}
\centering
\label{tab:test1-2d-error}
\begin{tabular}{cccccc}
 \toprule
 & $N$  & $h+b$&$hu$&$hv$&$h\theta$\\
\midrule
$L^1$-error
&400 &1.606E-15	&	1.723E-15	&	1.745E-15	&	4.507E-16	\\
&1600&1.501E-15	&	1.822E-15	&	1.839E-15	&	7.841E-16	\\
\midrule
$L^{\infty}$-error
&400  &6.345E-15	&	7.853E-15	&	7.864E-15	&	2.250E-15	\\
&1600 &6.277E-15	&	9.354E-15	&	9.289E-15	&	4.146E-15	\\
 \bottomrule	
\end{tabular}
\end{table}

\begin{example}\label{test6-2d}
(The perturbed lake-at-rest steady-state flow test for the 2D Ripa model.)
\end{example}
We choose this example to verify the ability of our well-balanced MM-DG scheme to capture small perturbations over the lake-at-rest water surface and temperature in two dimensions.
The bottom topography is an isolated elliptical shaped hump,
\begin{equation*}
B(x,y)=3e^{-5(x-0.9)^2-50(y-0.5)^2}, \quad (x,y) \in (-2,2)\times(0,1).
\end{equation*}
The initial depth of water, velocities, and temperature are given by
\begin{equation*}
(h,u,v,\theta)(x,y,0)=\begin{cases}
\big(6-b(x,y)+\varepsilon,~0,~0, ~\frac{24}{6+\varepsilon}\big),& \text{for}~x\in(0.05 , 0.15)\\
\big(6-b(x,y),~~~~~~0,~0, ~~4~~\big),& \text{otherwise}\\
\end{cases}
\end{equation*}
where $\varepsilon=0.1$.
As time being, the initial perturbation splits into three waves, one remaining at the initial position and the others propagating left and right at the characteristic speeds $\pm \sqrt{gh\theta}$.
The reflection boundary conditions are used for all domain boundary.

The mesh at $t= 0.16$ and $0.24$ obtained with the $P^2$ MM-DG method and a moving mesh of $N=14400$ are shown in Fig.~\ref{Fig:test6-s1-mesh}.
One can see that the distribution of the mesh concentration is consistent with the contours of $h+b$ while capturing the complex features in small perturbations.

The contours of $h+b$ at $t=0.16$ and $0.24$ obtained using the $P^2$ MM-DG method with a moving mesh of $N=14400$ and the fixed meshes of $N=14400$ and $N=102400$ are shown in
Fig.~\ref{Fig:test6-s1-H}.
The similar results of $hu$, $hv$, and $h\theta$ are shown in Figs.~\ref{Fig:test6-s1-hu},~\ref{Fig:test6-s1-hv}, and~\ref{Fig:test6-s1-eta}, respectively.
In Figs.~\ref{Fig:test6-s1-H-cuty} and ~\ref{Fig:test6-s1-hu-cuty}, the cut of the corresponding results of $h+b$ and $hu$ along the line $y=0.5$ is compared for the moving and fixed meshes.
We can see that the moving mesh solution with $N=14400$ is more accurate than that with
a fixed mesh of $N=14400$ and $N=102400$.

\begin{figure}[H]
\centering
\subfigure[mesh at $t=0.16$]{
\includegraphics[width=0.8\textwidth,trim=20 90 10 100,clip]
{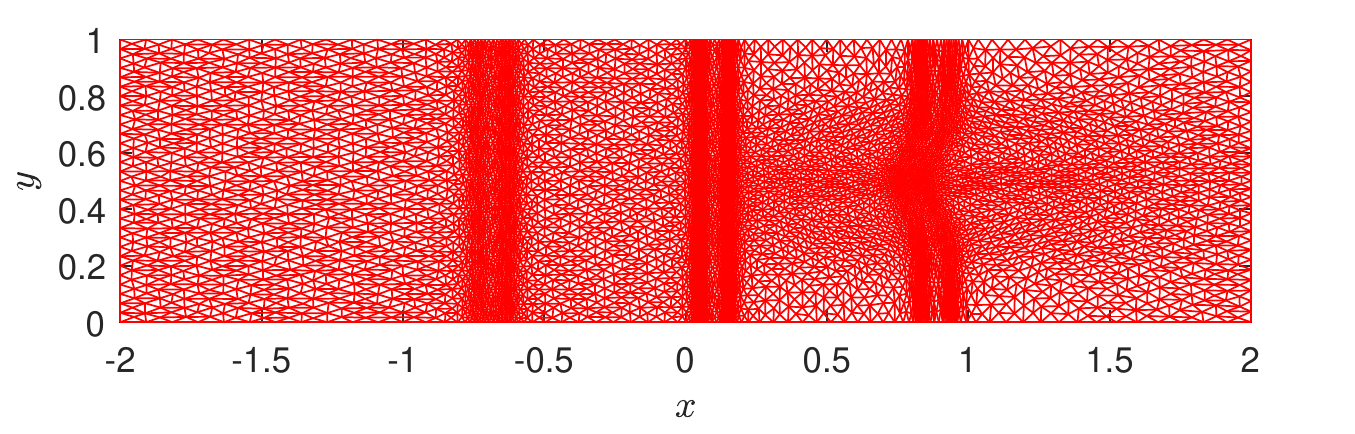}}
\subfigure[mesh at $t=0.24$]{
\includegraphics[width=0.8\textwidth,trim=20 90 10 100,clip]
{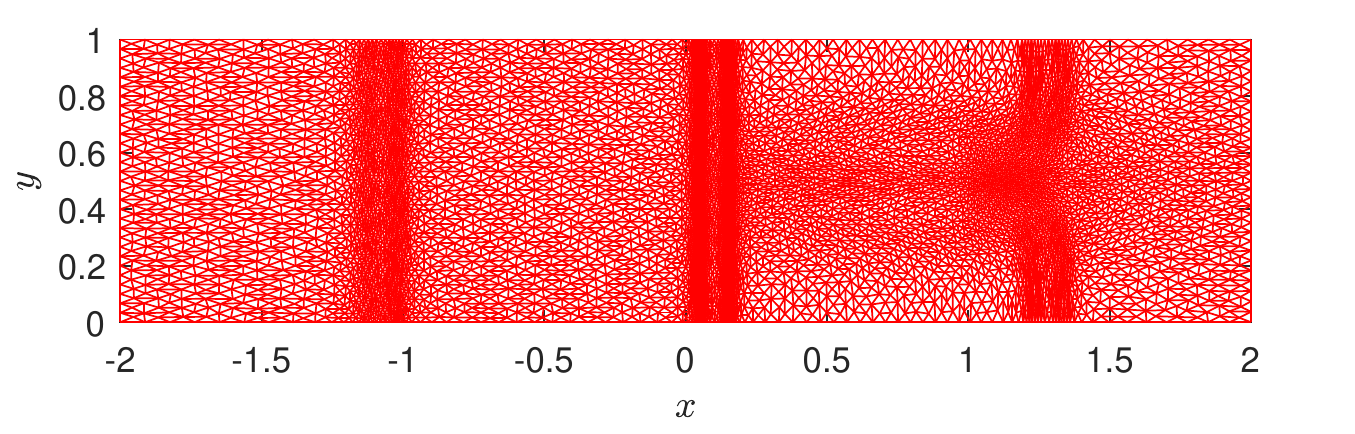}}
\caption{Example \ref{test6-2d}. The moving mesh of $N=14400$ at $t=0.16$ and $0.24$ are obtained with the $P^2$ MM-DG method.}
\label{Fig:test6-s1-mesh}
\end{figure}

\begin{figure}[H]
\centering
\subfigure[$h+b$: MM 14400]{
\includegraphics[width=0.45\textwidth,trim=10 100 10 100,clip]
{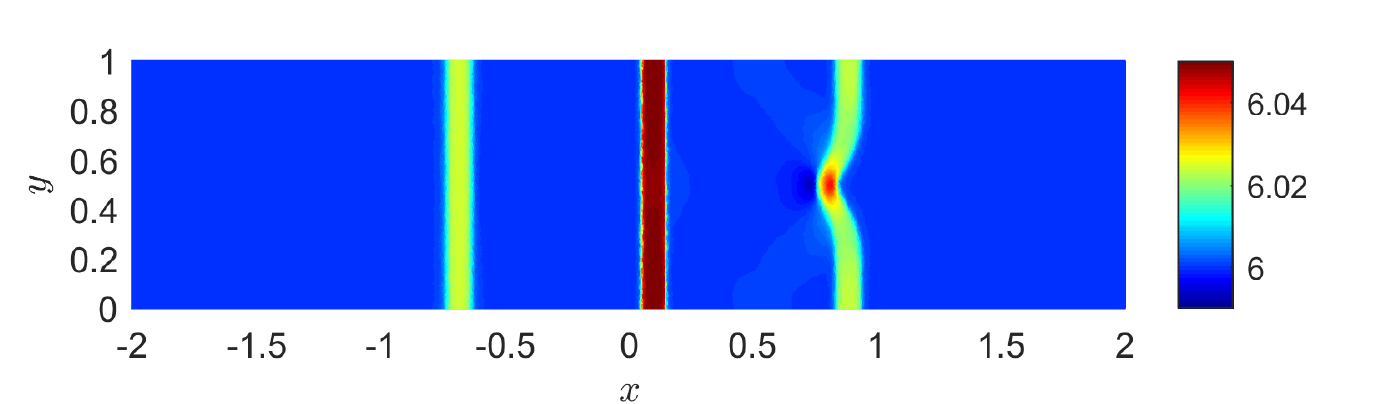}}
\subfigure[$h+b$: MM 14400]{
\includegraphics[width=0.45\textwidth,trim=10 100 10 100,clip]
{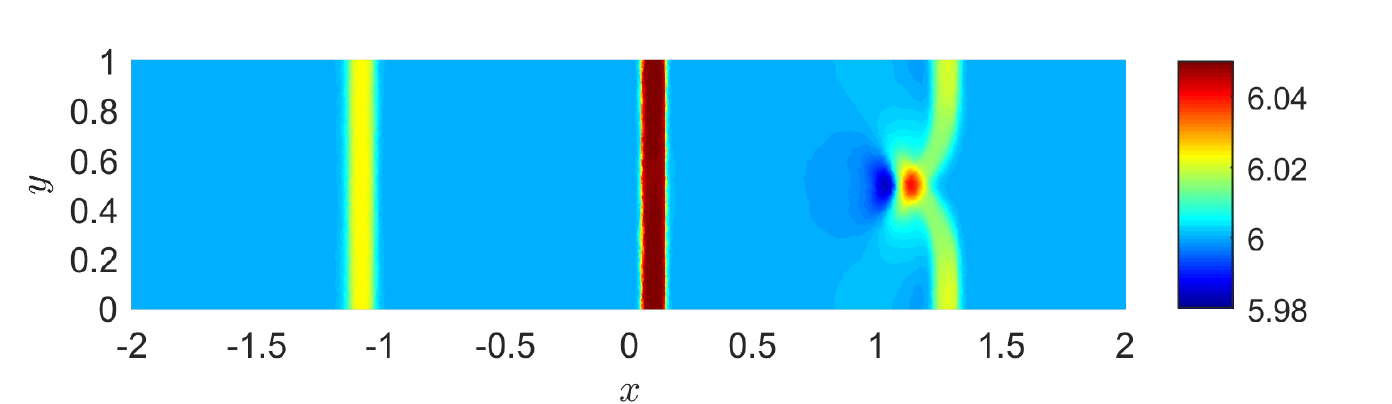}}
\subfigure[$h+b$: FM 14400]{
\includegraphics[width=0.45\textwidth,trim=10 100 10 100,clip]
{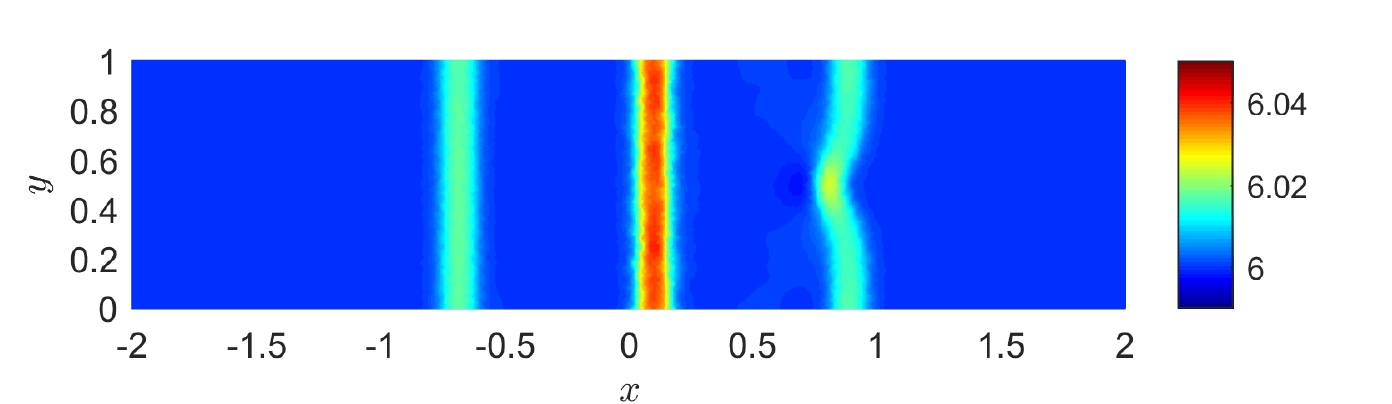}}
\subfigure[$h+b$: FM 14400]{
\includegraphics[width=0.45\textwidth,trim=10 100 10 100,clip]
{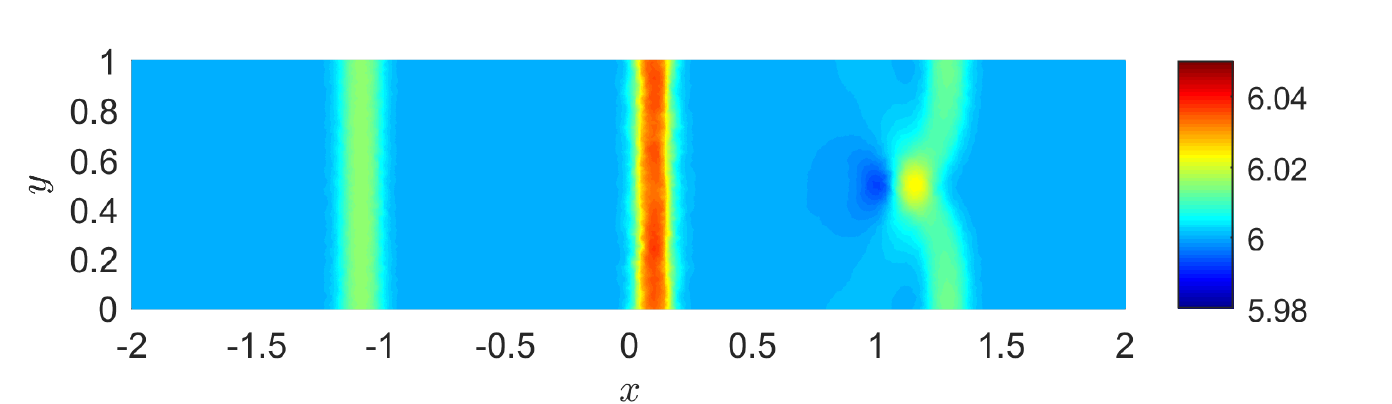}}
\subfigure[$h+b$: FM 102400]{
\includegraphics[width=0.45\textwidth,trim=10 100 10 100,clip]
{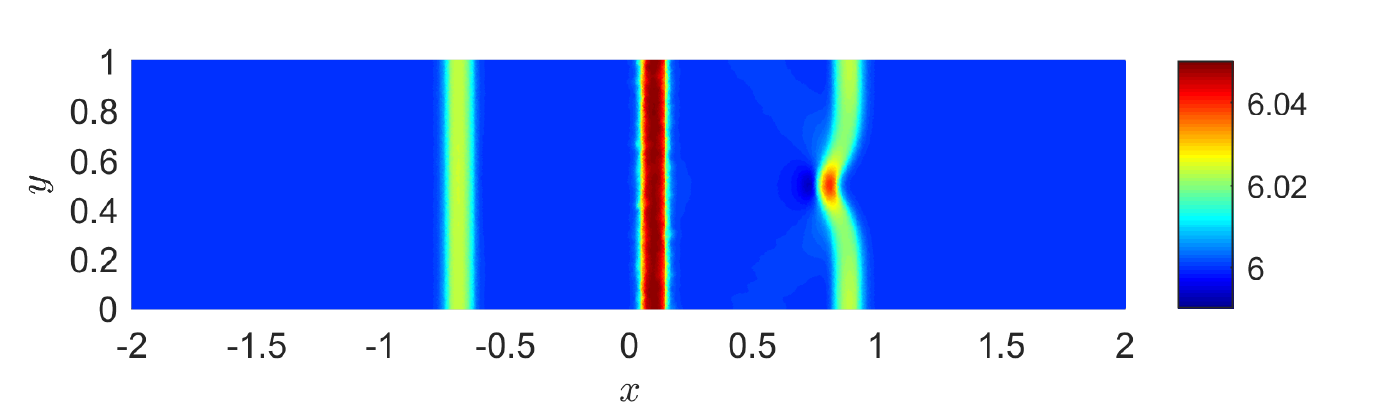}}
\subfigure[$h+b$: FM 102400]{
\includegraphics[width=0.45\textwidth,trim=10 100 10 100,clip]
{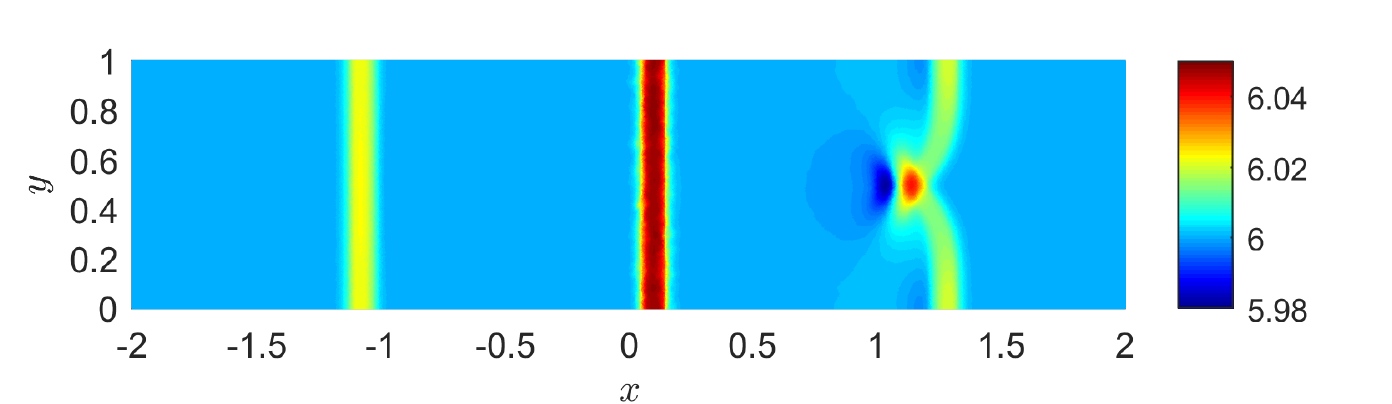}}
\caption{Example \ref{test6-2d}. The contours of $h+b$ at at $t=0.16$ (left column) and $0.24$ (right column) are obtained with the $P^2$ MM-DG method and a moving mesh of $N=14400$ and fixed meshes of $N=14400$ and $N=102400$.}
\label{Fig:test6-s1-H}
\end{figure}

\begin{figure}[H]
\centering
\subfigure[$hu$: MM 14400]{
\includegraphics[width=0.45\textwidth,trim=10 100 10 100,clip]
{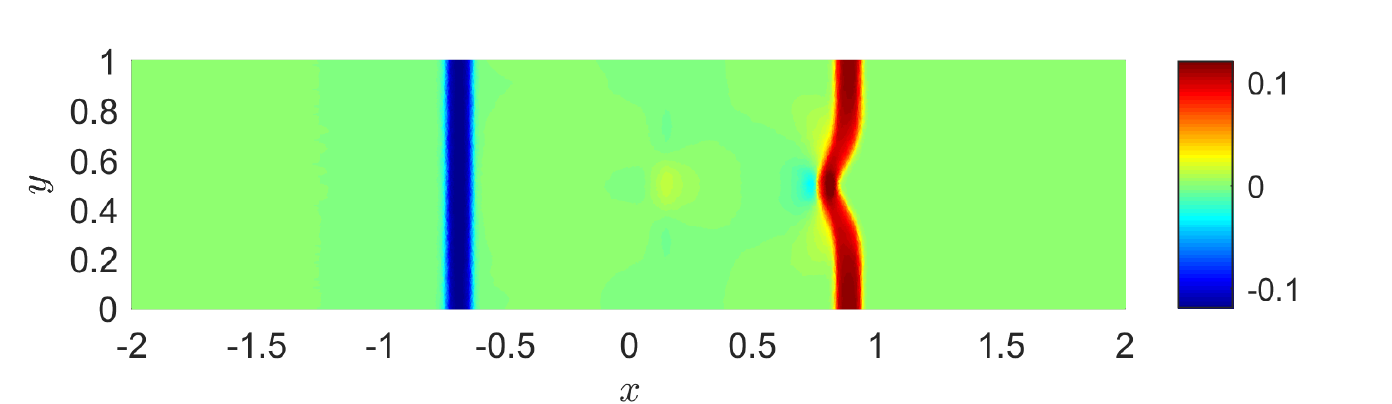}}
\subfigure[$hu$: MM 14400]{
\includegraphics[width=0.45\textwidth,trim=10 100 10 100,clip]
{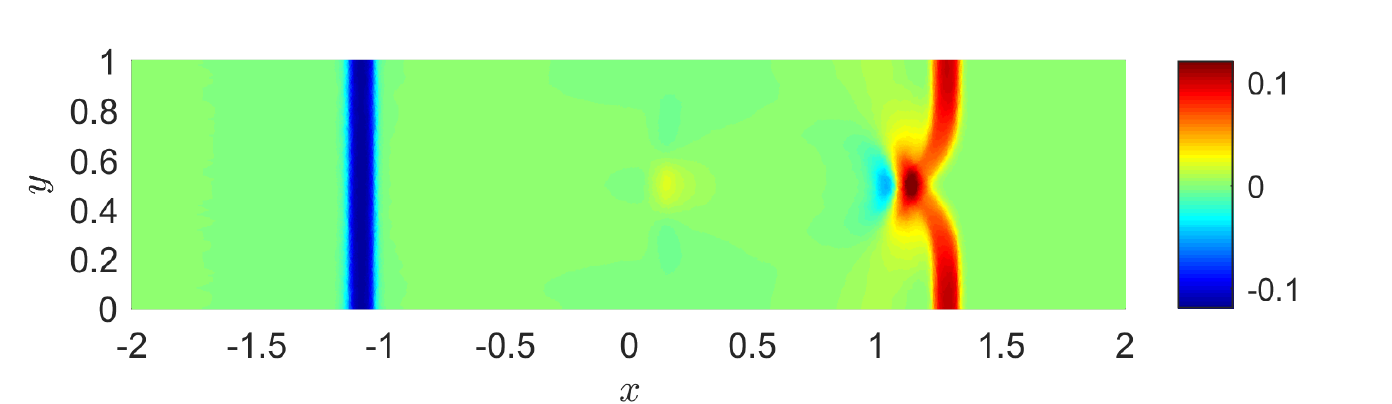}}
\subfigure[$hu$: FM 14400]{
\includegraphics[width=0.45\textwidth,trim=10 100 10 100,clip]
{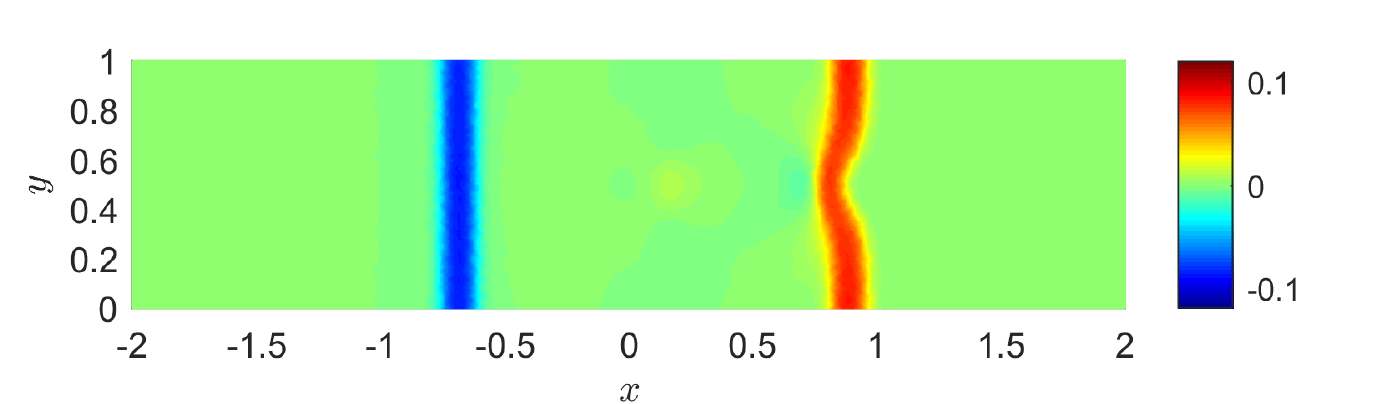}}
\subfigure[$hu$: FM 14400]{
\includegraphics[width=0.45\textwidth,trim=10 100 10 100,clip]
{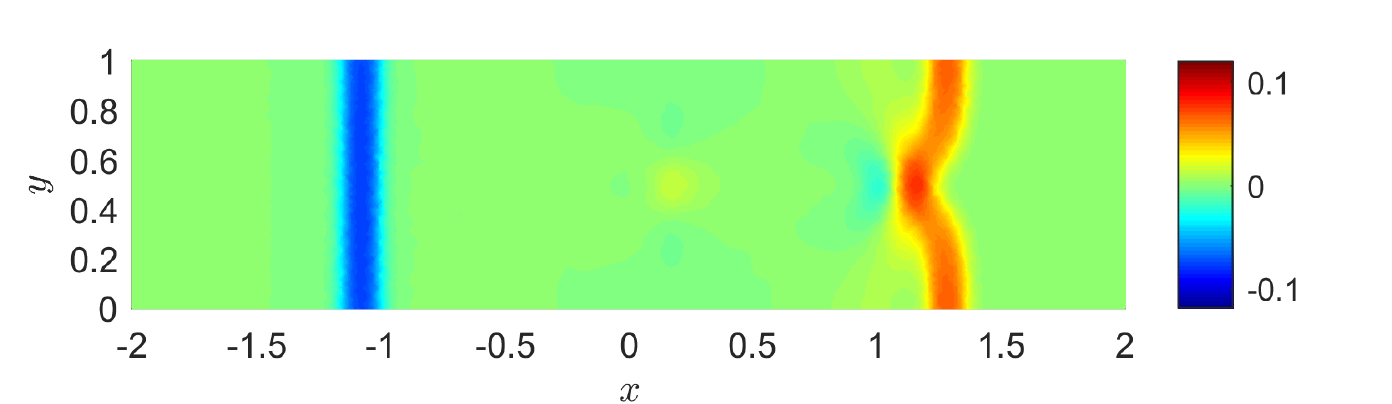}}
\subfigure[$hu$: FM 102400]{
\includegraphics[width=0.45\textwidth,trim=10 100 10 100,clip]
{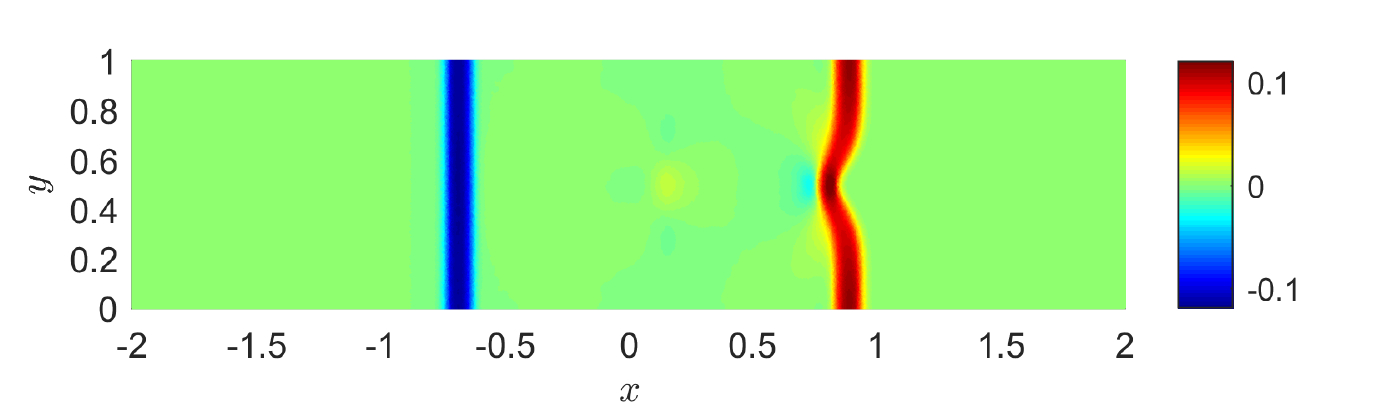}}
\subfigure[$hu$: FM 102400]{
\includegraphics[width=0.45\textwidth,trim=10 100 10 100,clip]
{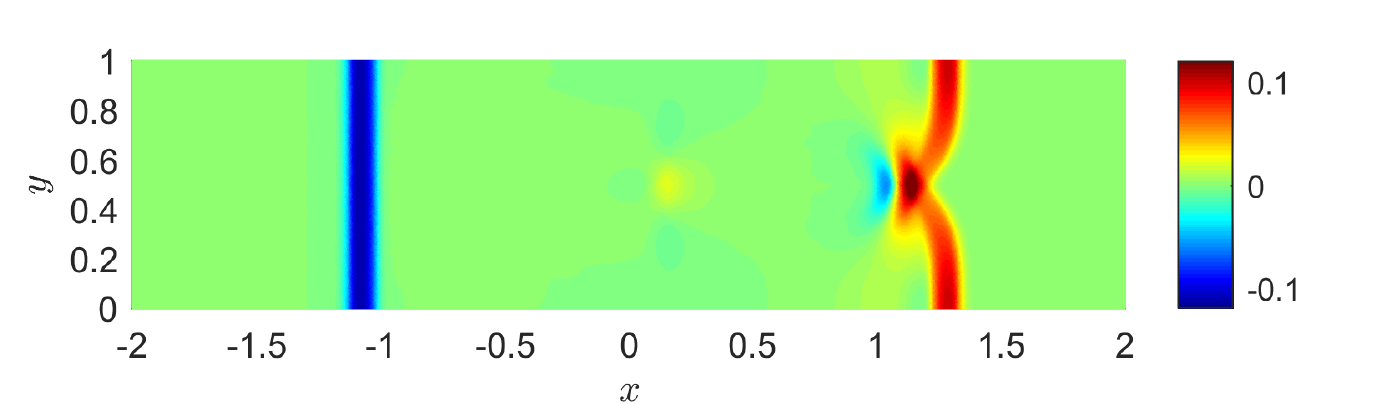}}
\caption{Example \ref{test6-2d}. The contours of $hu$ at at $t=0.16$ (left column) and $0.24$ (right column) are obtained with the $P^2$ MM-DG method and a moving mesh of $N=14400$ and fixed meshes of $N=14400$ and $N=102400$.}
\label{Fig:test6-s1-hu}
\end{figure}

\begin{figure}[H]
\centering
\subfigure[$hv$: MM 14400]{
\includegraphics[width=0.45\textwidth,trim=10 100 10 100,clip]
{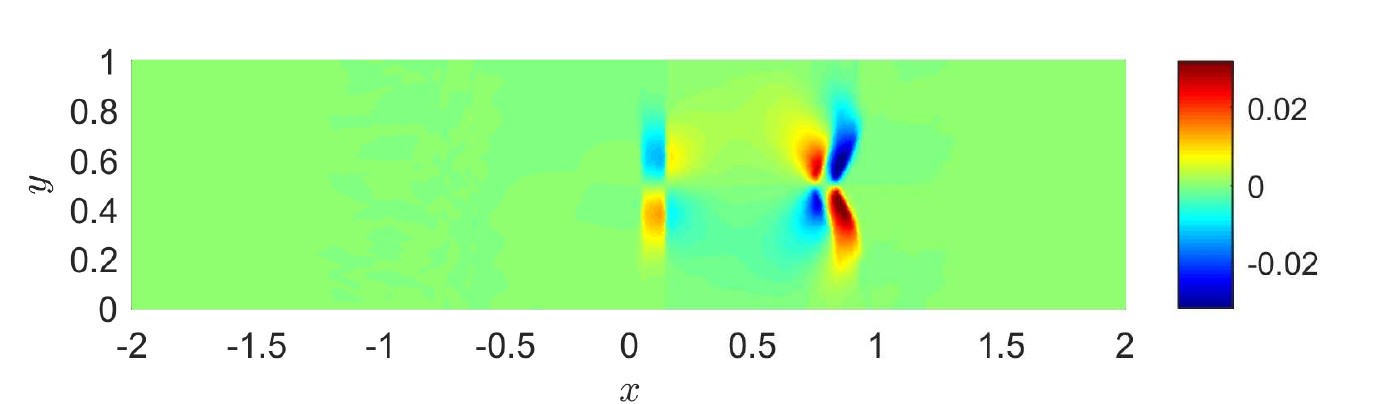}}
\subfigure[$hv$: MM 14400]{
\includegraphics[width=0.45\textwidth,trim=10 100 10 100,clip]
{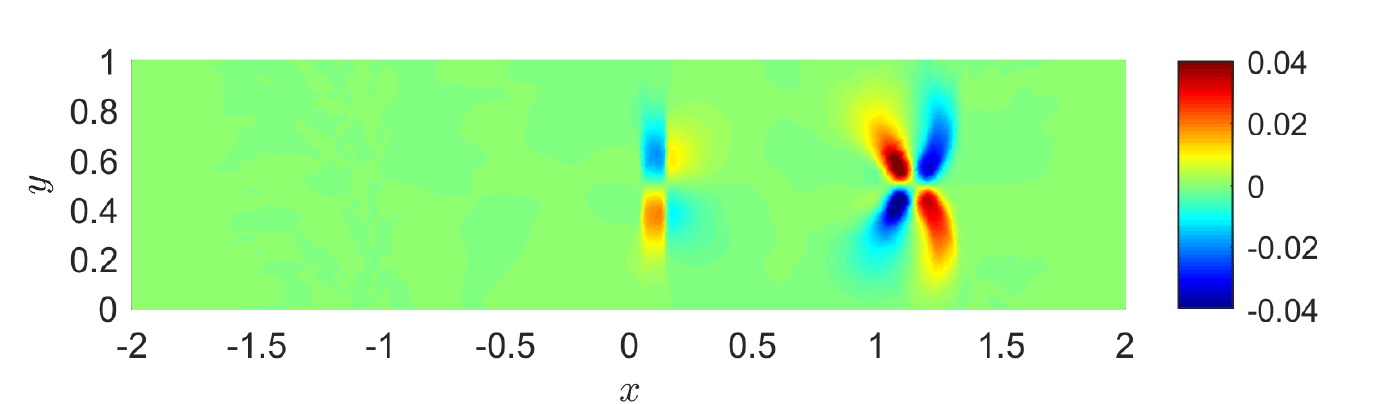}}
\subfigure[$hv$: FM 14400]{
\includegraphics[width=0.45\textwidth,trim=10 100 10 100,clip]
{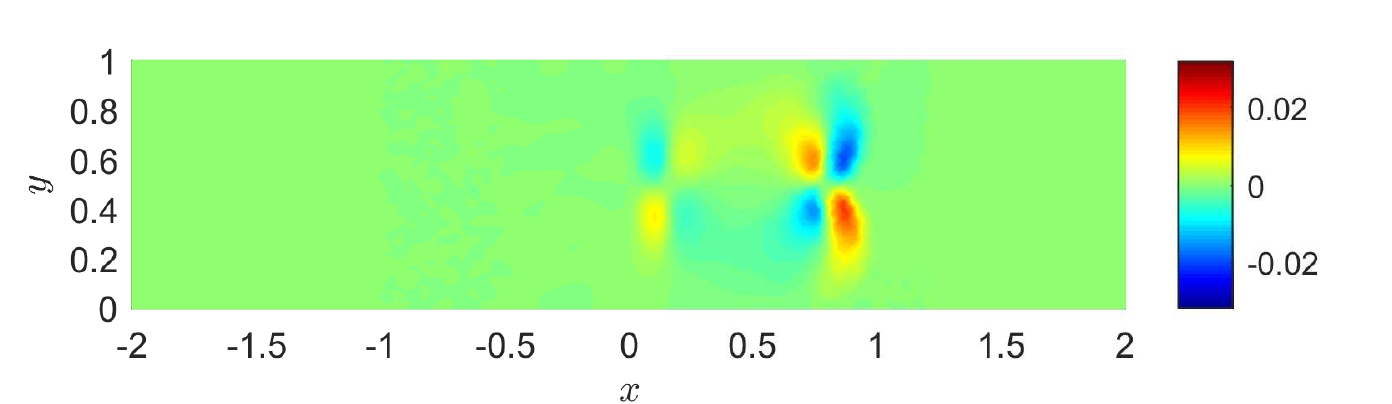}}
\subfigure[$hv$: FM 14400]{
\includegraphics[width=0.45\textwidth,trim=10 100 10 100,clip]
{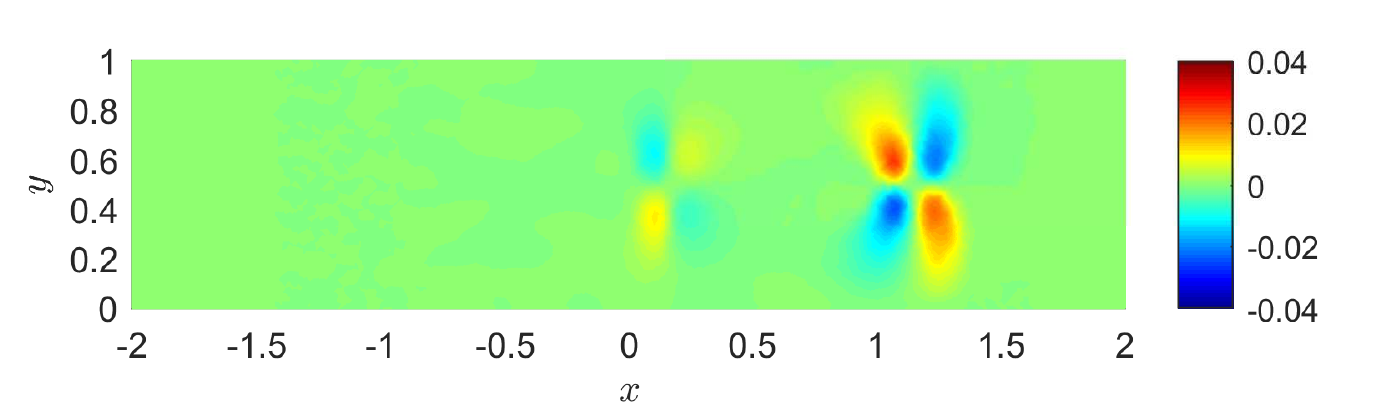}}
\subfigure[$hv$: FM 102400]{
\includegraphics[width=0.45\textwidth,trim=10 100 10 100,clip]
{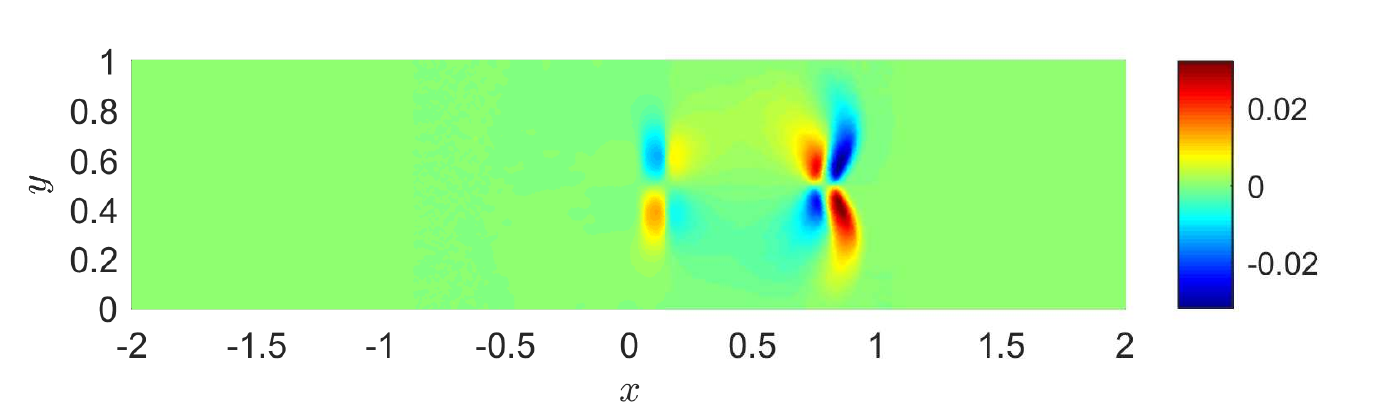}}
\subfigure[$hv$: FM 102400]{
\includegraphics[width=0.45\textwidth,trim=10 100 10 100,clip]
{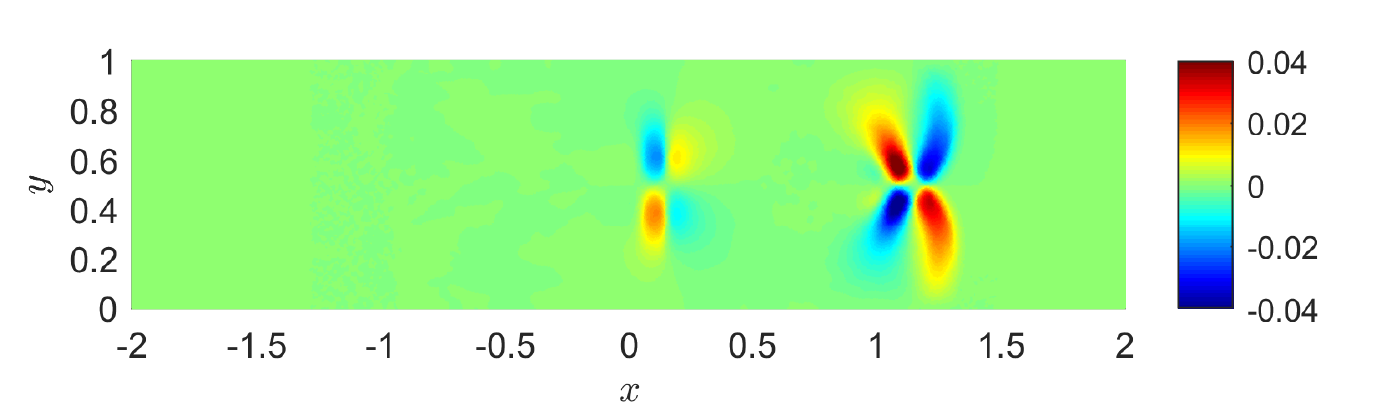}}
\caption{Example \ref{test6-2d}. The contours of $hv$ at at $t=0.16$ (left column) and $0.24$ (right column) are obtained with the $P^2$ MM-DG method and a moving mesh of $N=14400$ and fixed meshes of $N=14400$ and $102400$.}
\label{Fig:test6-s1-hv}
\end{figure}

\begin{figure}[H]
\centering
\subfigure[$h\theta$: MM 14400]{
\includegraphics[width=0.45\textwidth,trim=10 100 10 100,clip]
{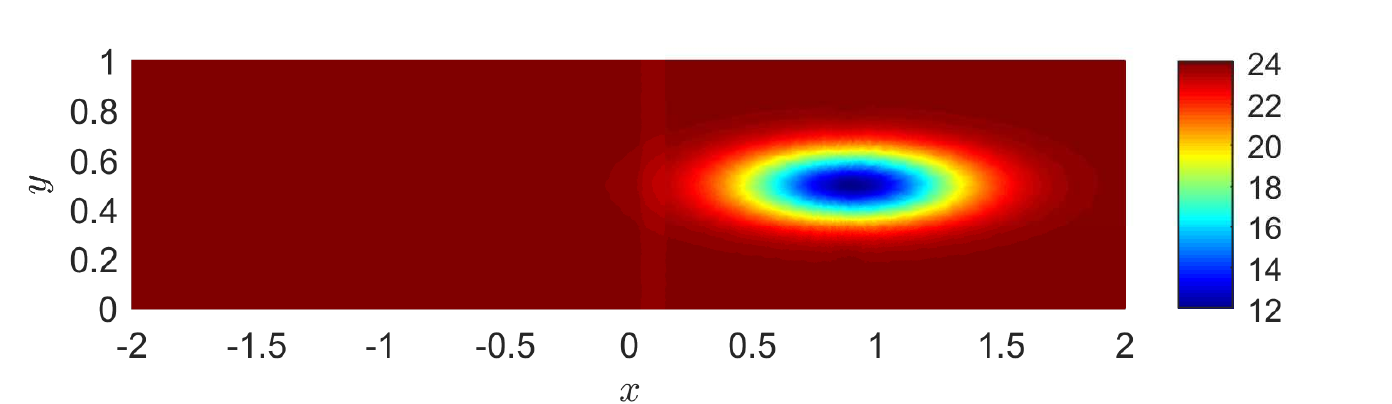}}
\subfigure[$h\theta$: MM 14400]{
\includegraphics[width=0.45\textwidth,trim=10 100 10 100,clip]
{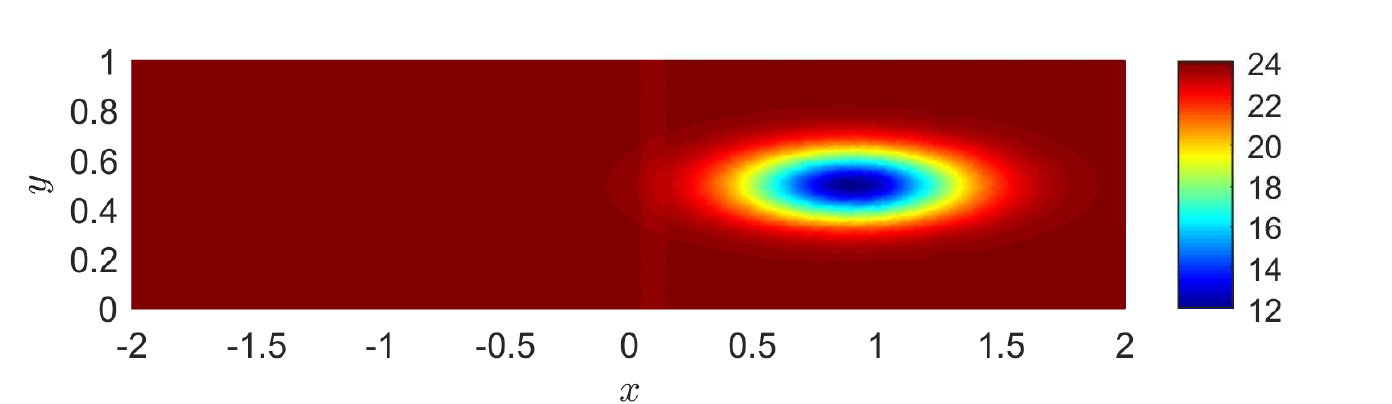}}
\subfigure[$h\theta$: FM 14400]{
\includegraphics[width=0.45\textwidth,trim=10 100 10 100,clip]
{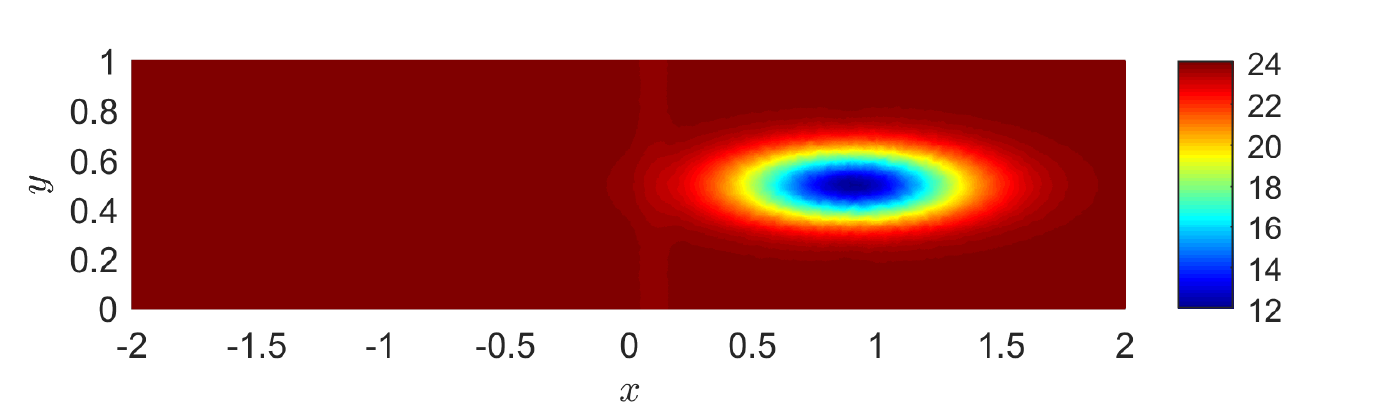}}
\subfigure[$h\theta$: FM 14400]{
\includegraphics[width=0.45\textwidth,trim=10 100 10 100,clip]
{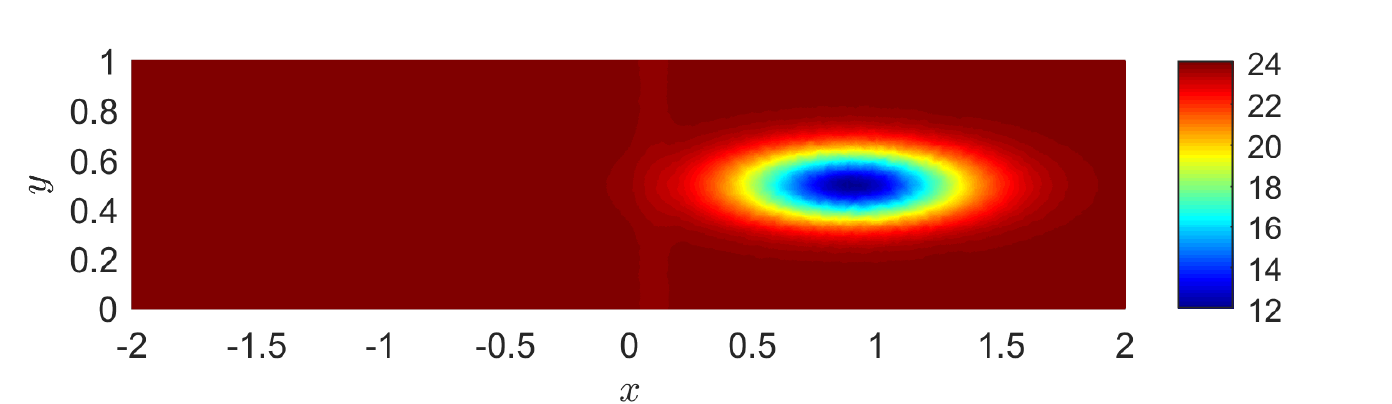}}
\subfigure[$h\theta$: FM 102400]{
\includegraphics[width=0.45\textwidth,trim=10 100 10 100,clip]
{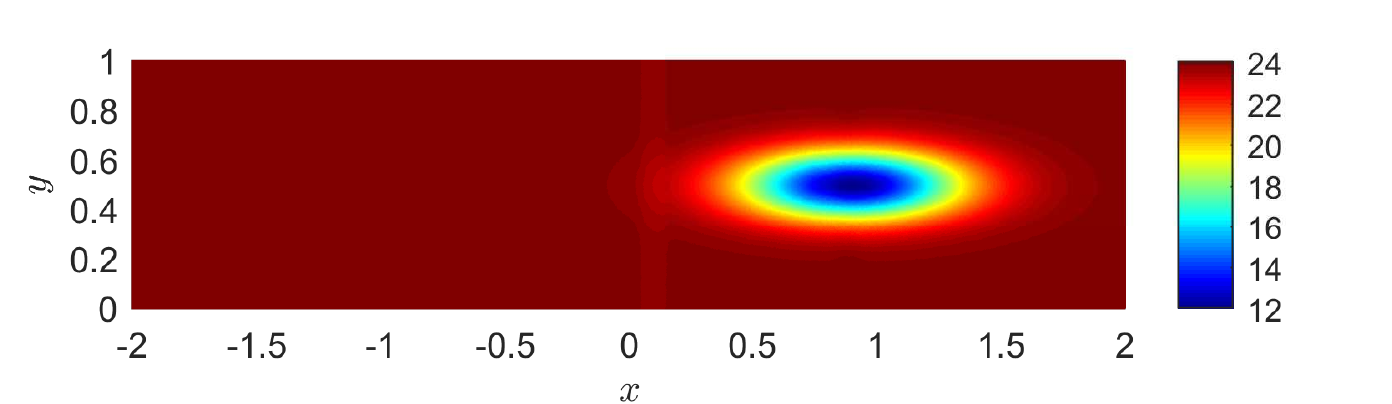}}
\subfigure[$h\theta$: FM 102400]{
\includegraphics[width=0.45\textwidth,trim=10 100 10 100,clip]
{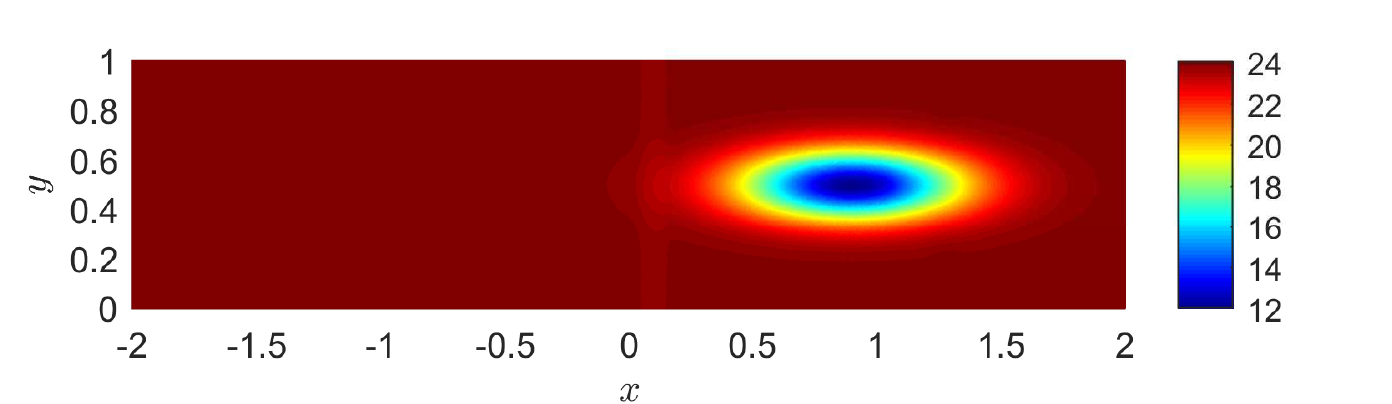}}
\caption{Example \ref{test6-2d}. The contours of $h\theta$ at $t=0.16$ (left column) and $0.24$ (right column) are obtained with the $P^2$ MM-DG method and a moving mesh of $N=14400$ and fixed meshes of $N=14400$ and $N=102400$.}
\label{Fig:test6-s1-eta}
\end{figure}

\begin{figure}[H]
\centering
\subfigure[$h+b$: $t=0.16$]{
\includegraphics[width=0.4\textwidth,trim=10 0 40 10,clip]
{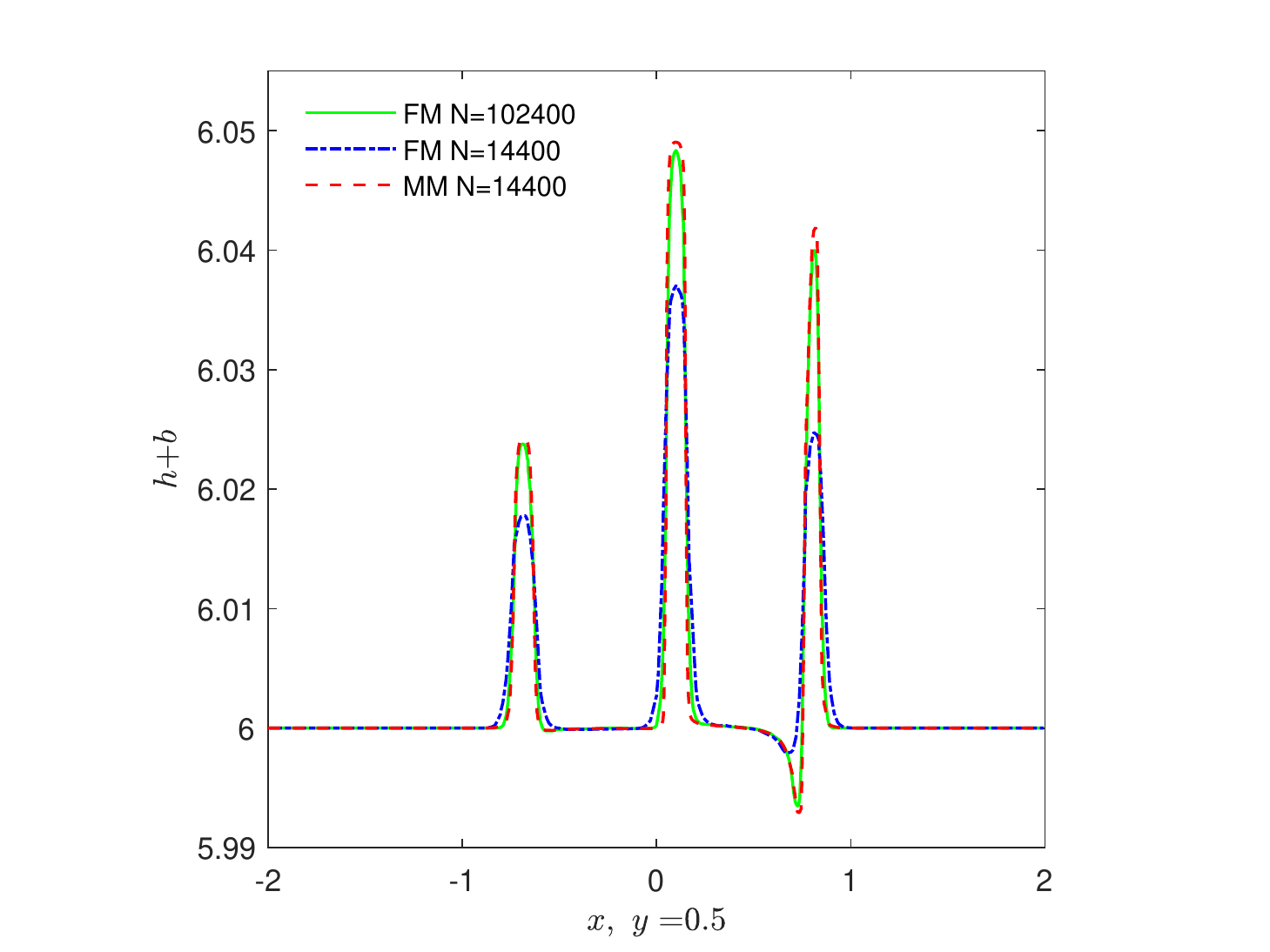}}
\subfigure[close view of (a)]{
\includegraphics[width=0.4\textwidth,trim=10 0 40 10,clip]
{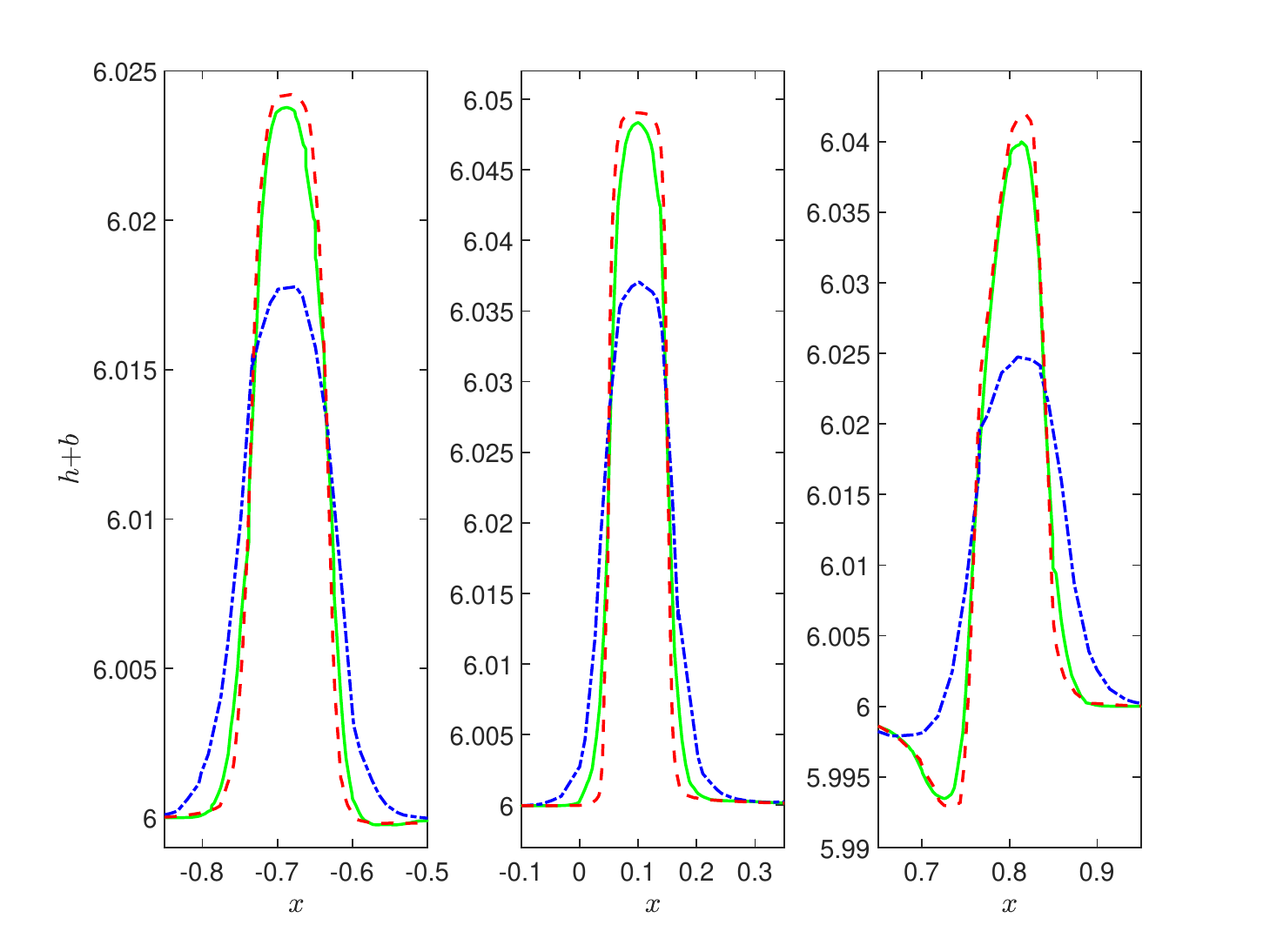}}
\subfigure[$h+b$: $t=0.24$]{
\includegraphics[width=0.4\textwidth,trim=10 0 40 10,clip]
{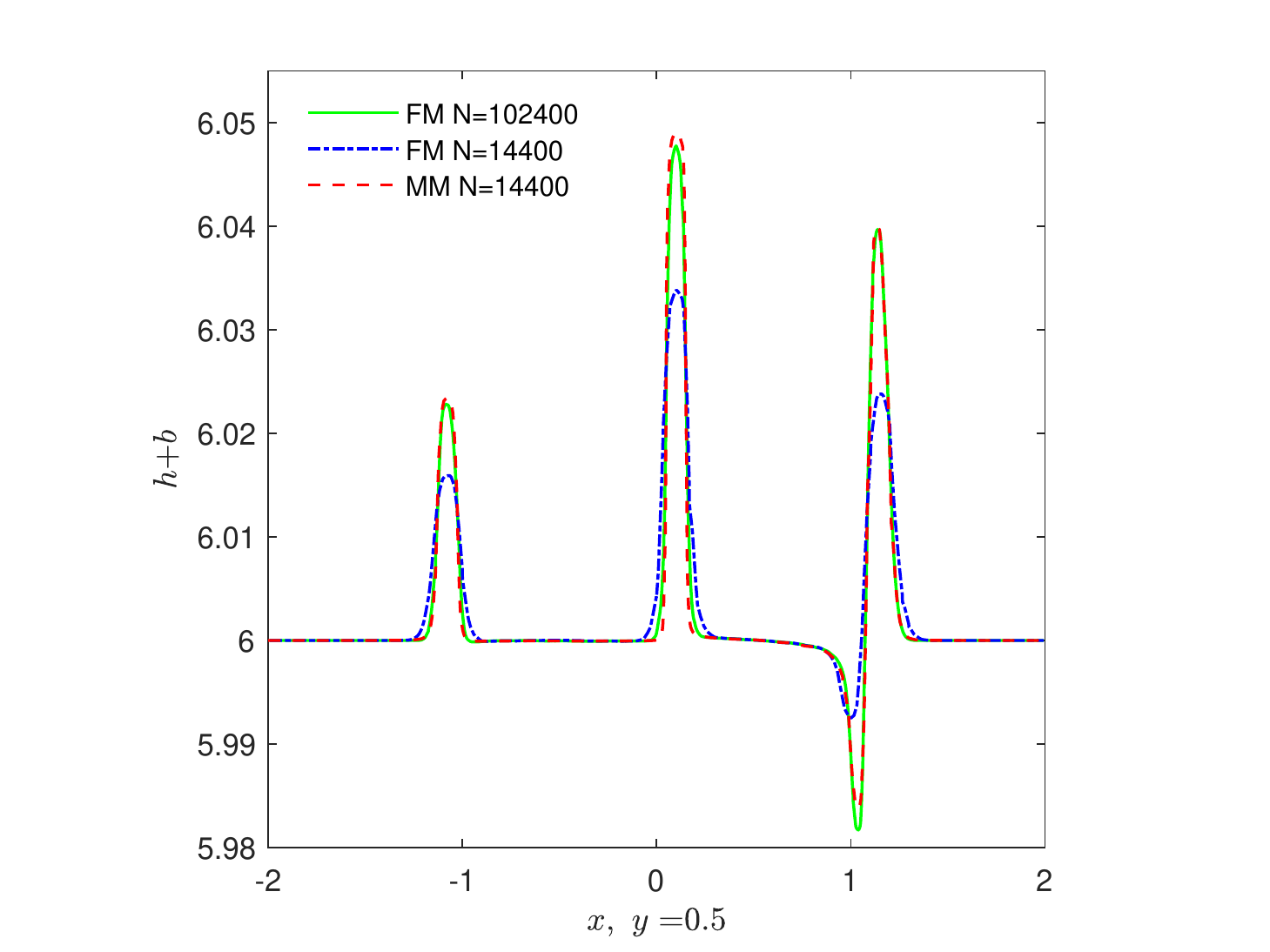}}
\subfigure[close view of (c)]{
\includegraphics[width=0.4\textwidth,trim=10 0 40 10,clip]
{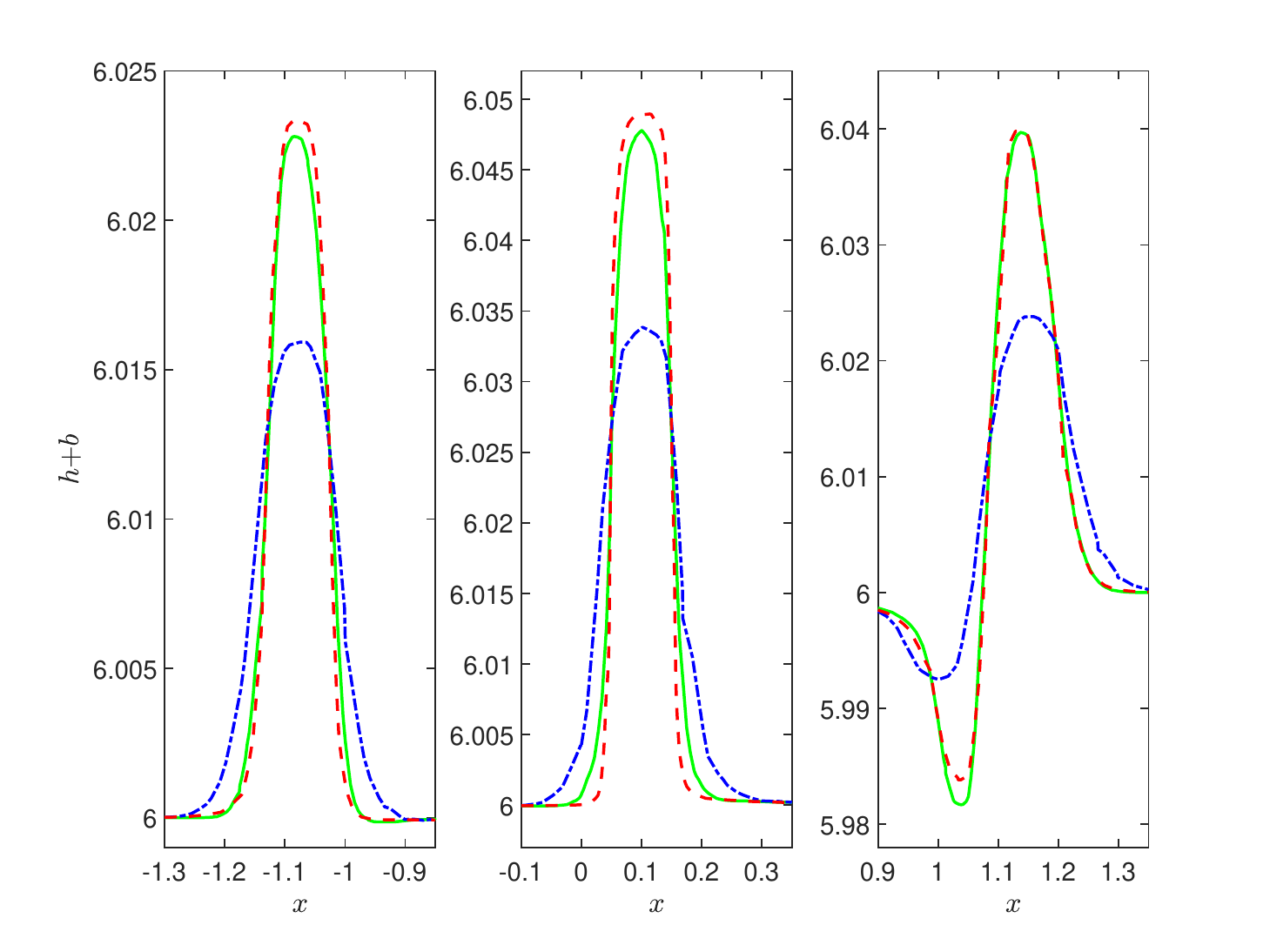}}
\caption{Example \ref{test6-2d}. The cut of $h+b$ along the line $y=0.5$ at $t=0.16$ (first row) and $0.24$ (second row) are obtained with $P^2$-DG and a moving mesh of $N=14400$ and fixed meshes of $N=14400$ and $N=102400$.}
\label{Fig:test6-s1-H-cuty}
\end{figure}

\begin{figure}[H]
\centering
\subfigure[$hu$: $t=0.16$]{
\includegraphics[width=0.4\textwidth,trim=10 0 40 10,clip]
{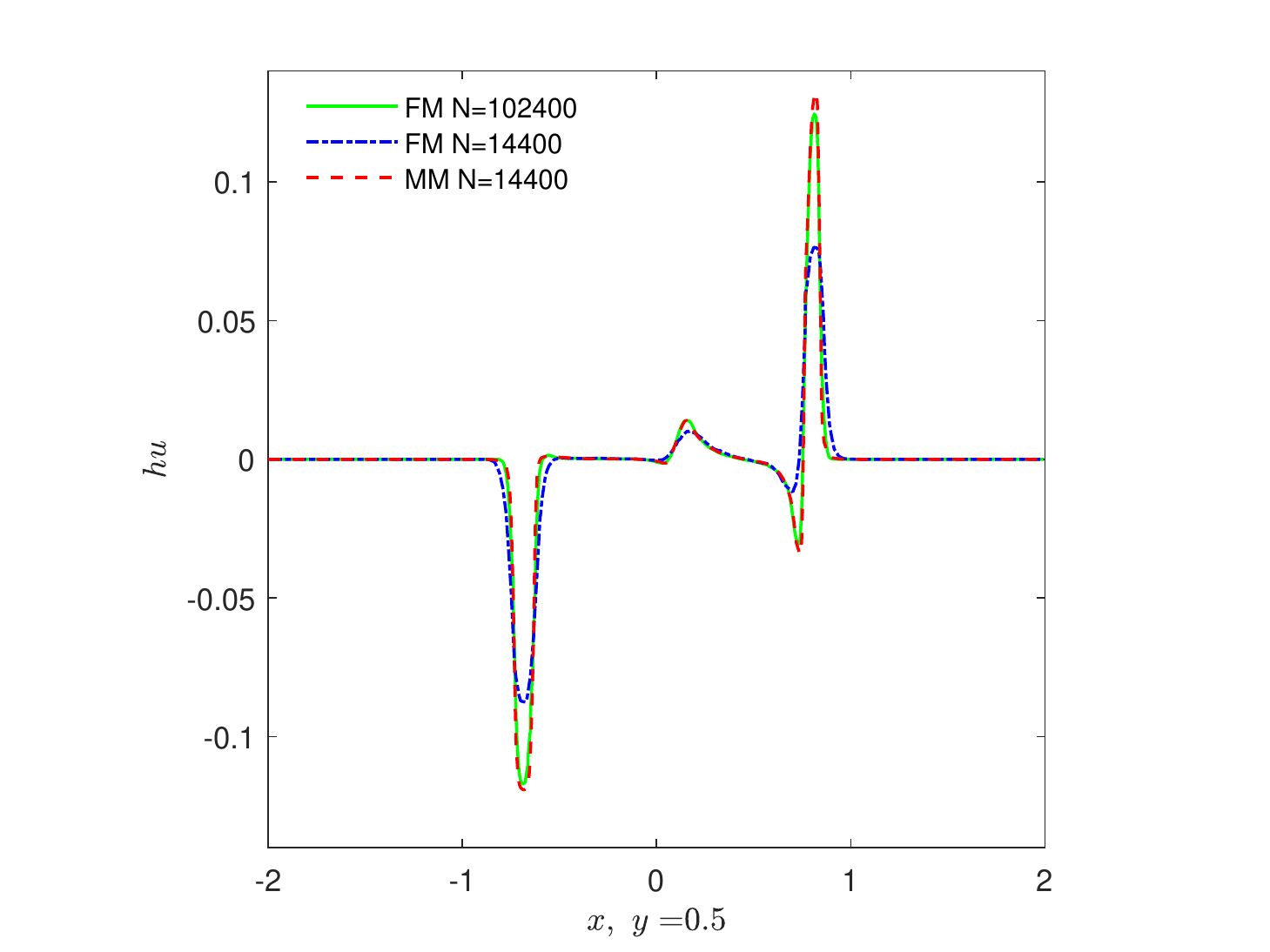}}
\subfigure[close view of (a)]{
\includegraphics[width=0.4\textwidth,trim=10 0 40 10,clip]
{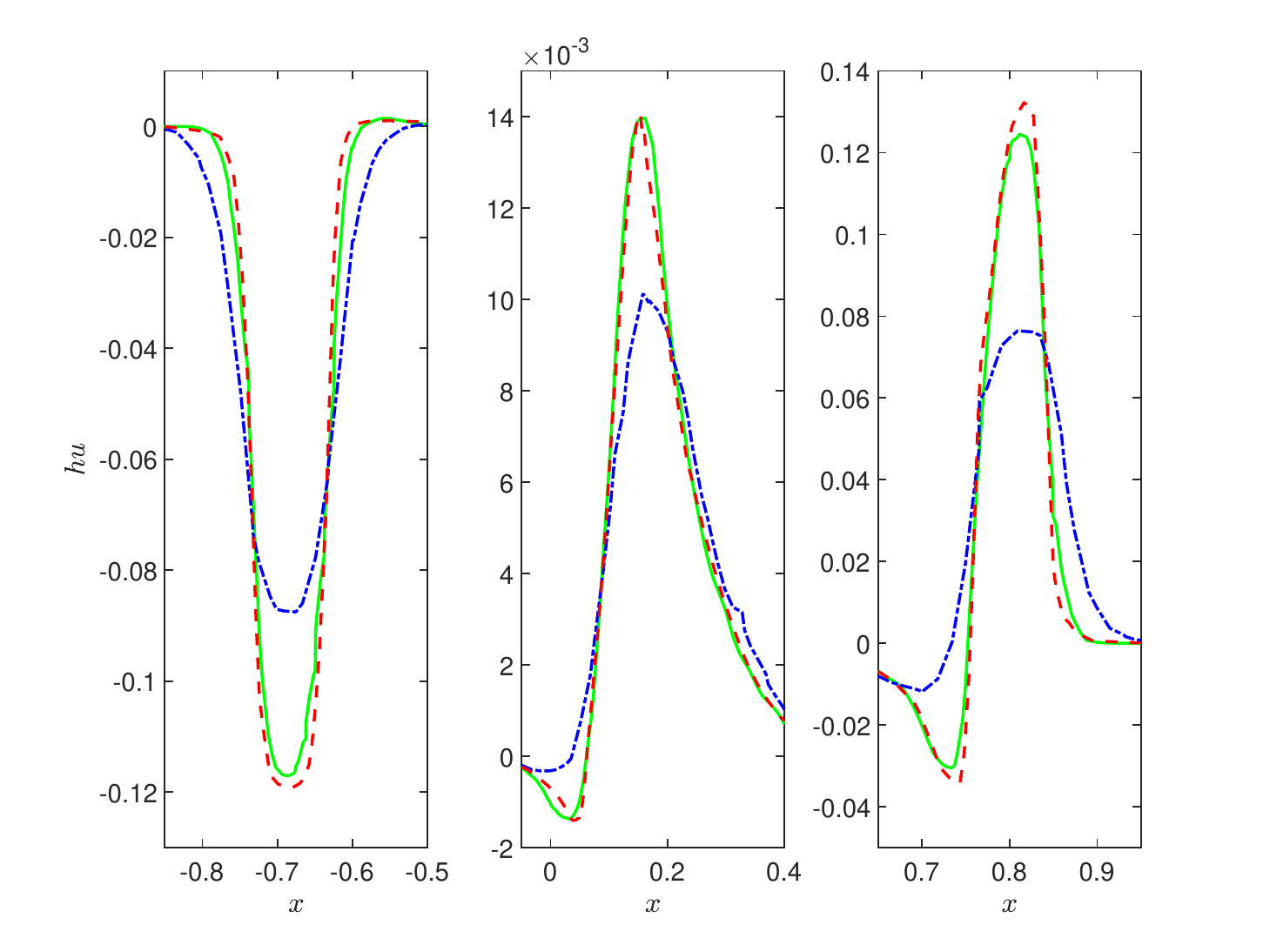}}
\subfigure[$hu$: $t=0.24$]{
\includegraphics[width=0.4\textwidth,trim=10 0 40 10,clip]
{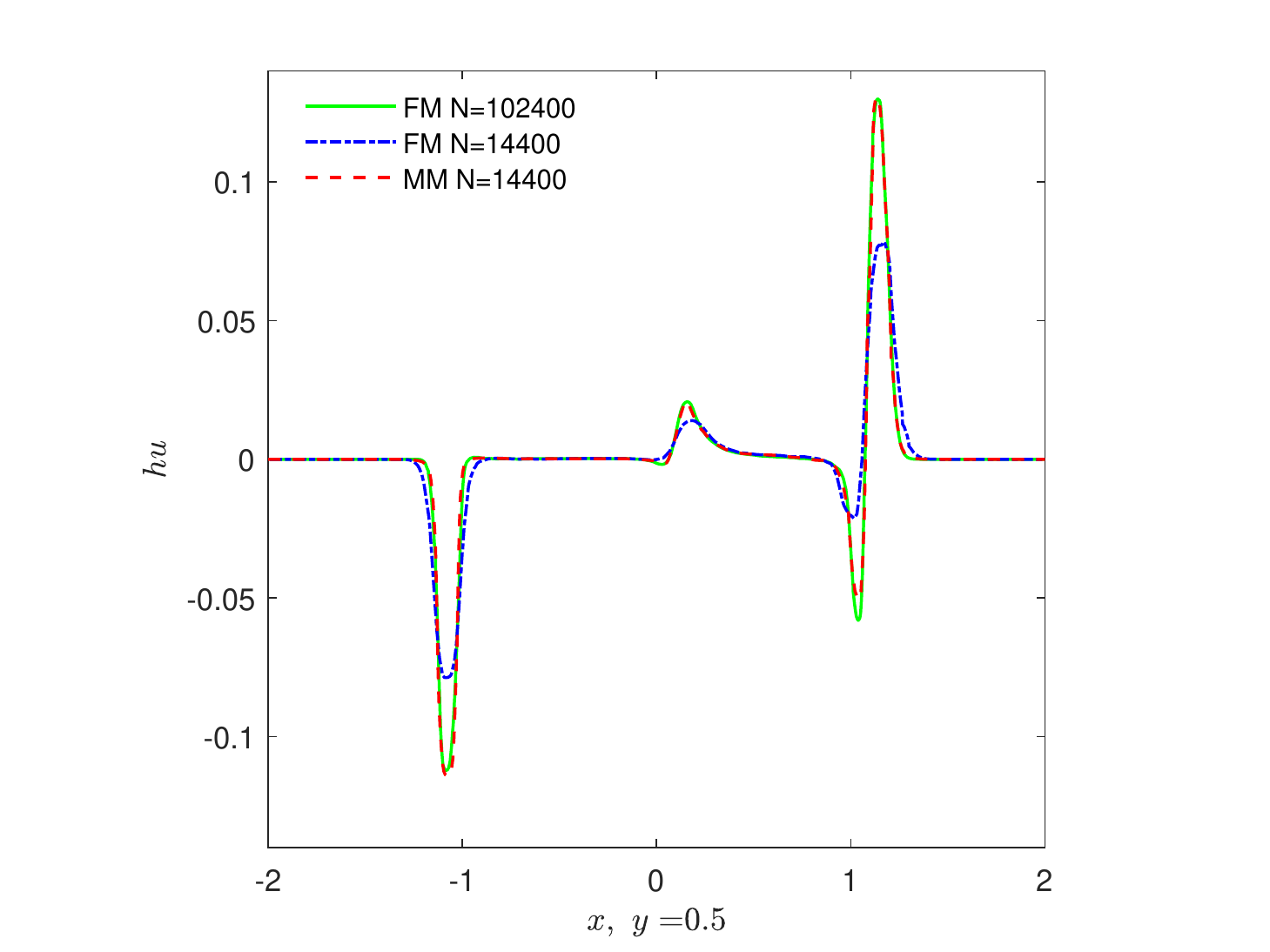}}
\subfigure[close view of (c)]{
\includegraphics[width=0.4\textwidth,trim=10 0 40 10,clip]
{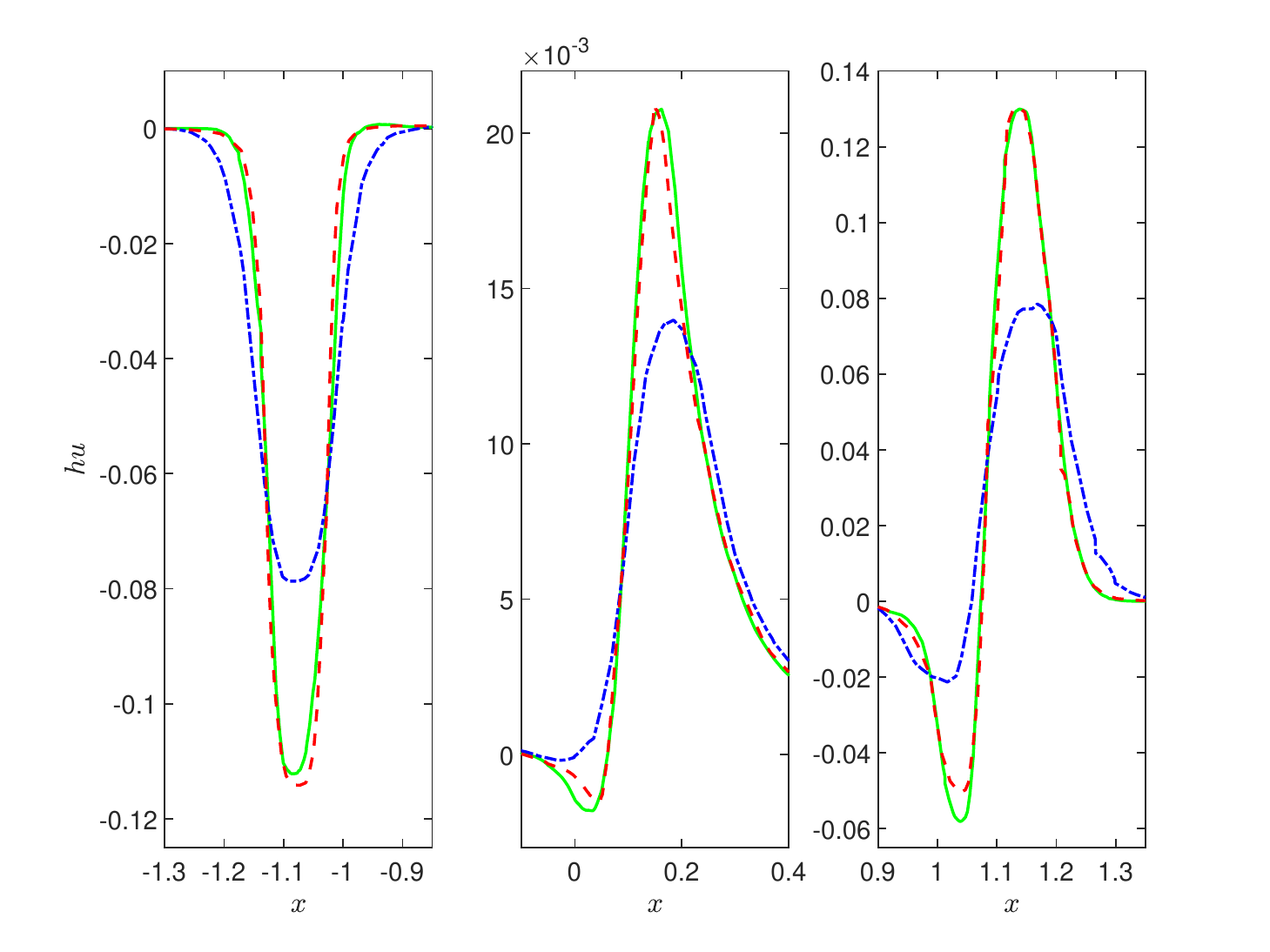}}
\caption{Example \ref{test6-2d}. The cut of $hu$ along the line $y=0.5$ at $t=0.16$ (first row) and $0.24$ (second row) are obtained with $P^2$-DG and a moving mesh of $N=14400$ and fixed meshes of $N=14400$ and $N=102400$.}
\label{Fig:test6-s1-hu-cuty}
\end{figure}

\section{Conclusions}
\label{sec:conclusions}

We have presented a high-order well-balanced and positivity-preserving rezoning-type moving mesh DG method for the Ripa model on triangular meshes. The Ripa model takes into account thermodynamic processes which are important particularly in the upper layers of the ocean where the variations of sea surface temperature  are an important factor in climate change.

The rezoning-type MM-DG method contains three basic components at each time step,
the adaptive mesh movement,
interpolation/remapping of the solution between the old and new meshes,
and numerical solution of the Ripa model on the new mesh.
We have employed the MMPDE scheme in the first component at each
time step. A key of the MMPDE scheme is to define the metric tensor that provides the information needed for controlling the size, shape, and orientation of mesh elements over the whole spatial domain.
We compute the metric tensor based on the equilibrium variable $\mathcal{E}=\frac{1}{2}(u^2+v^2)+g\theta(h+b)$
and the water depth $h$ so that the mesh adapts to the features in the water flow and the bottom topography. To minimize the effect of dimensional difference between $\mathcal{E}$ and $h$, we use  $\ln(\mathcal{E})$ and $\ln(h)$ instead of using
$\mathcal{E}$ and $h$ directly in the computation.

To ensure the well-balance property of the overall MM-DG method for the Ripa model, special attention needs to be paid to the interpolation/remapping of the flow variables and bottom topography, slope limiting, and positivity preserving limiting; cf.~\S\ref{sec:WB-MMDG}.
We have proposed to use a DG-interpolation scheme (cf. \S\ref{sec:DG-interp}) for the purpose.
It has high-order accuracy, conserves the mass, preserves constant solutions, preserves positivity of water depth $h$ and temperature $\eta$ (or $\theta$), and satisfies the linearity. We have also employed a high-order correction
for the approximation of the bottom topography.

The numerical examples in one and two dimensions have been presented to demonstrate the well-balance, positivity preservation, high-order accuracy, and mesh adaptation ability of the MM-DG method.
They have also shown that the method is well suited for the numerical simulation of the lake-at-rest steady-state and its perturbations.
Particularly, the mesh concentration reflects structures in the flow variables and bottom topography and leads to more accurate numerical solutions than a fixed mesh with the same number of elements.

%


\end{document}